\def\ps@pprintTitle{%
 \let\@oddhead\@empty
 \let\@evenhead\@empty
 \def\@oddfoot{\centerline{\thepage}}%
 \let\@evenfoot\@oddfoot}
\numberwithin{equation}{section}
\newtheorem{theorem}{Theorem}[section]
\newtheorem{lemma}{Lemma}[section]
\newtheorem{corollary}{Corollary}[section]
\newtheorem{proposition}{Proposition}[section]
\newtheorem{remark}{Remark}[section]
\newenvironment{proof}{\vspace{0.3cm}\hfill\\% 
\noindent\textbf{Proof}}{$\Box$
\vspace{0.2cm}\\}
\newcommand{\be}{\begin{equation}}
\newcommand{\ee}{\end{equation}}
\newcommand{\ba}{\begin{array}}
\newcommand{\ea}{\end{array}}
\newcommand{\beas}{\begin{eqnarray*}}
\newcommand{\eeas}{\end{eqnarray*}}
\newcommand{\bea}{\begin{eqnarray}}
\newcommand{\eea}{\end{eqnarray}}
\newcommand{\inprod}[2]{\left\langle{#1},{#2}\right\rangle}
\newcommand{\calA}{\mathcal{A}}
\newcommand{\calB}{\mathcal{B}}
\newcommand{\calK}{\mathcal{K}}
\newcommand{\calL}{\mathcal{L}}
\newcommand{\calN}{\mathcal{N}}
\newcommand{\R}{\mathbb{R}}
\newcommand{\bbA}{\mathbb{A}}
\newcommand{\bbP}{\mathbb{P}}
\newcommand{\bbE}{\mathbb{E}}
\newcommand{\N}{\mathbb{N}}
\newcommand{\mS}{\mathbb{S}}
\newcommand{\cH}{\mathcal{H}}
\newcommand{\frakF}{\mathfrak{F}}
\newcommand{\bx}{{\bf x}}
\newcommand{\by}{{\bf y}}
\newcommand{\what}{\widehat}
\newcommand{\wtd}{\widetilde}
\newcommand{\sumlm}{\sum_{\ell=0}^\infty \sum_{m=-\ell}^\ell}
\newcommand{\red}[1]{{#1}}
\newcommand{\eqhl}[1]{{#1}}
\newcommand{\mulop}{\mathscr{M}}
\newcommand{\YK}[1]{{#1}}
\begin{document}
\begin{frontmatter}

%% use the tnoteref command within \title for footnotes;
%% use the tnotetext command for theassociated footnote;
%% use the fnref command within \author or \address for footnotes;
%% use the fntext command for theassociated footnote;
%% use the corref command within \author for corresponding author footnotes;
%% use the cortext command for theassociated footnote;
%% use the ead command for the email address,
%% and the form \ead[url] for the home page:
\title{A non-uniform discretization of stochastic heat equations with multiplicative noise on the unit sphere}
%% \tnotetext[label1]{}
%% \author{Name\corref{cor1}\fnref{label2}}
%\author{Yoshihito Kazashi\corref{cor1}\fnref{label2} and Q. T. Le Gia}
\author{Yoshihito Kazashi\corref{cor1}}
 \ead{y.kazashi@unsw.edu.au}
%% \ead{email address}
%% \ead[url]{home page}
%% \fntext[label2]{}
 \cortext[cor1]{Corresponding author.}
%% \address{Address\fnref{label3}}
%% \fntext[label3]{}
\author{Quoc T. Le Gia\corref{cor2}}
 \ead{qlegia@unsw.edu.au}
%% \ead{email address}
%% \ead[url]{home page}
%% \fntext[label2]{}
% \cortext[cor2]{the corresponding author}
%% \address{Address\fnref{label3}}
%% \fntext[label3]{}

%% use optional labels to link authors explicitly to addresses:
%% \author[label1,label2]{}
%% \address[label2]{}

\address{School of Mathematics and Statistics, University of New South Wales, Sydney, NSW 2052, Australia}

\begin{abstract}
We investigate a discretization of a class of stochastic heat equations on the unit sphere with multiplicative noises. 
A spectral method is used for the spatial discretization and the truncation of the Wiener process, while an implicit Euler scheme with non-uniform steps is used for the temporal discretization. Some numerical experiments inspired by Earth's surface temperature data analysis GISTEMP provided by NASA are given.
\end{abstract}

\begin{keyword}
%% keywords here, in the form: keyword \sep keyword
stochastic heat equation\sep multiplicative noise\sep 
non-uniform time discretization\sep implicit Euler scheme\sep isotropic random fields\sep sphere

%% PACS codes here, in the form: \PACS code \sep code

%% MSC codes here, in the form: \MSC code \sep code
%% or \MSC[2008] code \sep code (2000 is the default)

\end{keyword}

\end{frontmatter}
%------------------------------------------------------------------
\section{Introduction}
Let $\mS^2$ be the unit sphere in the Euclidean space $\R^{3}$,
that is $$\mS^2 = \{\bx\in \R^3:  |\bx| = 1 \},$$ where $|\cdot|$ 
denotes the usual Euclidean norm.
We consider the following stochastic heat equation
\begin{equation}\label{equ:main}
\begin{aligned}
  \mathrm{d}X(t) &= \Delta^\ast X(t) \mathrm{d}t + B(X(t)) \mathrm{d}W(t), \\
   X(0) &= \xi, \qquad t \in [0,1],
\end{aligned}
\end{equation}
on the Hilbert space $H = L^2(\mS^2)$, the space of equivalence classes of square integrable functions. 
Here $\xi \in H$ is the {deterministic} initial value, and $\Delta^\ast$
denotes the Laplace--Beltrami operator on $\mS^2$.
Under suitable assumptions on $B$, a mild solution $X = (X(t))_{t\in [0,1]}$
of \eqref{equ:main} exists and is uniquely determined as a continuous
process with values in $H$ (see, e.g., Da Prato and Zabczyk \cite{DaPZab14}). 

{F}or a bounded domain in $\R^d$ and a standard scalar Wiener process, 
numerical algorithms that solve general stochastic evolution equations on Hilbert spaces 
were constructed and analyzed first
in the work of Grecksch and Kloeden \cite{GreKlo96}. 
{Gy\"{o}ngy and Nualart \cite{GyoNua97} also considered an implicit scheme
for stochastic parabolic partial differential equations (PDEs) over the unit interval driven by space-time white noise.}
%\footnote{I am not sure if this is accurate. Grecksch and Kloeden \cite{GreKlo96} considered the scalar Wiener process, and it doesn't seem \cite{GyoNua97} did anything general.}.
%\footnote{{I do not think Gy\"{o}ngy and Nualart \cite{GyoNua97} considered general stochastic evolution equations on Hilbert spaces, and they considered the space-time white noise, not the standard scalar Wiener process.}}
Further contributors to the problem include 
Allen, Novosel, and Zhang \cite{AllNovZha98}, 
Gy\"{o}ngy\cite{Gyo99}, 
Shardlow \cite{Sha99}
Davie and Gaines \cite{DavGai01}, 
Du and Zhang \cite{DuZha02},  
Kloeden and Shott \cite{KloSho01}, 
Hausenblas \cite{Hau02,Hau03},
Lord and Rougemon \cite{LorRou04}, Yan \cite{Yan04,Yan05}, and M\"{u}ller-Gronbach and Ritter \cite{GroRit07,GroRit07a}. %YK: this sentence seems to be out of place: M\"{u}ller-Gronbach and Ritter \cite{GroRit07,GroRit07a} proposed a discretization scheme for the heat equation on the unit cube $[0,1]^d$ which allow different time steps for different eigenspace of the covariance operator. 

Recently, using a characterization of $Q$-Wiener processes (where $Q$
is the covariance operator) {on the sphere that has a rotationally invariant covariance as a random field at a fixed time}, Lang and Schwab
\cite{LanSch14} considered a numerical scheme for a special case
of \eqref{equ:main} with $B(X)$ being the identity, {i.e., equations with the additive noise.}

In this work, we consider the equations on the sphere with the multiplicative noise. 
Following \cite{LanSch14}, we consider $Q$-Wiener processes that have a rotationally invariant covariance function. A natural question {arising} would be whether or not the invariance propagates. Considering a class of affine noise, we derive an equation of second moment, and show a characterization of the invariance of the covariance function under rotation. 

{We will \YK{further} study an It\^{o}--Galerkin method when $B(X)$ is assumed to satisfy certain growth conditions, and consider a non-uniform temporal discretization, and establish a convergence rate.} Our work also can be seen as an extension of the works by M\"{u}ller-Gronbach and Ritter \cite{GroRit07,GroRit07a}, {who proposed a discretization 
scheme for the heat equation on the unit cube $[0,1]^d$ which allow different time steps for different eigenspace of the covariance operator}, to the spherical case. 
We remark that this is a non-trivial task. Their proofs that validate the non-uniform time step do not seem to be easily generalizable to a general Hilbert space setting:  
in the argument in \cite{GroRit07,GroRit07a}, the eigenfunctions of the Laplace operator on the cube with the Dirichlet condition being uniformly bounded is repeatedly used in the proof, further the integration by parts on $[0,1]$, 
which uses the zero Dirichlet boundary condition, is crucial. 
On the sphere, we have neither of the properties. 
Upon the normalization to make them orthonormal on $L^2(\mS^2)$, the magnitude of spherical harmonics, 
the eigenfunctions of the Laplace--Beltrami operator, grows as the degree of the polynomial goes up. Further, on the sphere, we lack a convenient first order derivative that corresponds to the usual derivative on $[0,1]$. These difficulties are treated by exploiting the properties of spherical harmonics.  %First, we will define theWiener process on the sphere then discretize the stochastic partial differential equation (SPDE) by a variant of the implicit Euler--Maruyama method in time/It\^{o}--Galerkin in space. We analyze the convergence rate of the approximation.

The paper is organized as follows. In Section~\ref{sec:prelim}, we 
review necessary facts on function spaces,
random fields and Brownian motions on the unit sphere.
In Section~\ref{sec:SDEs}, we then introduce stochastic evolution equations on the sphere, and discuss the isotropy of the solution. 
Section~\ref{sec:discrete} deals with the discretization of the SDEs using
an Euler--Maruyama scheme. %The mean squared errors between the exact solution the numerical solutions are analysis in Section~\ref{sec:err anal}.
{Error bounds are stated in Section~\ref{sec:err anal}.}
Finally some numerical
results based on Earth's surface temperature analysis GISTEMP data provided
by NASA Goddard Institute for Space Science
will be presented in Section~\ref{sec:num}.
%--------------------------------------------------------------------------------------
\section{Preliminaries}\label{sec:prelim}
\subsection{Spherical harmonics and function spaces}

Let ${H:=} L^2(\mS^2)$ be the space of the equivalence class{es} of the square integrable functions on the unit sphere,
which is equipped with the following standard inner product
\begin{equation}\label{real inner} 
  \inprod{f}{g} := \int_{\mS^2} f(\bx) g(\bx) \mathrm{d}\varsigma(\bx),
\end{equation}
where $\mathrm{d}\varsigma$ 
is the surface measure of $\mS^2$. We write $\|f\|:=\sqrt{\inprod{f}{f}}$ for $f\in L^2(\mS^2)$. In spherical 
coordinates, for a point $\bx \in \mS^2$, we have the parametrization
$$\bx = (\sin\vartheta \cos \varphi, \sin\vartheta \eqhl{\sin\varphi}, \cos\vartheta)$$ 
for $\vartheta \in [0,\pi]$ and $\varphi \in [0,2\pi)$, where
for $\theta\in\{0,\pi\}$ we let $\varphi=0$. 
Further, we let $\mathrm{d}\varsigma(\bx) = \sin \vartheta \mathrm{d}\vartheta \mathrm{d} \varphi$.

The space $L^2(\mS^2)$ admits the spherical harmonics as a complete orthonormal system. 
Spherical harmonics are the restrictions to $\mS^2$ of homogeneous polynomials $Y(\bx)$ 
in $\R^{3}$ which satisfy $\Delta Y(\bx) = 0$, where ${\Delta}$ is the Laplace
operator {for functions on} $\R^{3}$. The space of all spherical harmonics of
degree $\ell$ on $\mS^2$, denoted by $\cH_\ell$, has an orthonormal
basis
$
  \{Y_{\ell m}:  m = -\ell,\ldots,\ell\},
$
and 
$
\mathrm{span} \{ Y_{\ell m} : \ell\ge 0, |m| \le \ell \}
$ 
is dense in  $L^2(\mS^2)$.

The explicit formula for $Y_{\ell m}$ is given by
\begin{align}
Y_{\ell m}(\vartheta,\varphi)
 =
\begin{cases}
\eqhl{ 
  \sqrt{2}\sqrt{ \frac{2\ell+1}{4\pi} \frac{(\ell-|m|)!}{(\ell+|m|)!}}
   P^{|m|}_{\ell}(\cos\vartheta) \sin(|m| \varphi)
}
   & \text{for }m=-\ell,\dotsc,-1 \\
% Y_{\ell m}(\vartheta,\varphi)
\eqhl{ 
\sqrt{ \frac{2\ell+1}{4\pi}} %\frac{(\ell-m)!}{(\ell+m)!}}
   P^0_{\ell}(\cos\vartheta)
}
   & \text{for }m=0 \\
% Y_{\ell m}(\vartheta,\varphi)
\eqhl{
 \sqrt{2}\sqrt{ \frac{2\ell+1}{4\pi} \frac{(\ell-m)!}{(\ell+m)!}}
   P^m_{\ell}(\cos\vartheta) \cos( m \varphi)
}
   & \text{for }m=1,\dotsc,\ell{,} \\
\end{cases}
\end{align}
where $P^m_{\ell}$ is the associated Legendre polynomial \red{of degree $\ell$ and order $m$}, given by
\[
  P^m_{\ell}(x) = (-1)^m (1-x^2)^{m/2} \frac{d^m}{dx^m} P_\ell(x), \quad m \ge 0,
  \qquad\text{ for }\ x\in[-1,1],
\]
where $P_\ell$ is the Legendre \red{polynomial}.
Thus
\[
 \inprod{Y_{\ell m}}{Y_{\ell' m'}} =  
\int_{\mS^2} Y_{\ell m}(\bx) Y_{\ell' m'}(\bx) \mathrm{d}\varsigma(\bx) = 
  \delta_{\ell \ell'}\delta_{m m'},
\]
where $\delta_{\ell \ell'}$ is the Kronecker symbol. 

The spherical harmonics of degree $\ell$ satisfy the following addition
theorem \cite{Mul66}
\begin{equation}\label{addition}
 \sum_{m=-\ell}^\ell Y_{\ell m}(\bx) Y_{\ell m}(\by) = 
\frac{2\ell+1}{4\pi} P_\ell(\bx \cdot \by).
\end{equation}
%and the surface
%measure on $\mS^2$ is  $$\mathrm{d}\varsigma(\bx)=\sin\vartheta d\vartheta d\varphi.$$ 

The spherical harmonics are the eigenfunctions of the Laplace--Beltrami
operator $\Delta^\ast$ with eigenvalues $-\mu_{\ell} = -\ell(\ell+1)$ for 
$\ell=0,1,2,\dots$.
In other words,
\[
  \Delta^\ast Y_{\ell m} = - \mu_\ell Y_{\ell m}.
\]
A more detailed discussion on spherical harmonics in $\R^{d+1}$ for $d\ge 2$ 
can be found in \cite{Mul66}. 
We define the Sobolev space $H^1$ on the sphere $\mS^2$ as the domain of $({1-}\Delta^\ast)^{\frac12}$:
\[
  H^1 := \left\{ h \in L^2(\mS^2) : \|h\|^2_{H^1} =\sumlm (1+\mu_\ell) 
  \inprod{h}{Y_{\ell m}}^2 < \infty \right\}.
\]
%and its subspace
%\[
% H^1_0 := \left\{ h \in L^2(S) : \int_{\mS^2} h dS = 0,\;\;
% \|h\|^2_{H^1_0} = \sumlm \mu_\ell
%  \inprod{h}{Y_{\ell m}}^2 < \infty \right\}.
%\]
%Since $\mu_\ell  \ge 2$ for all $\ell>0$ we 
%have the following inequality
%\begin{equation}\label{equ:poincare}
% \|h\|^2_{H^1} \le 2 \|h\|^2_{H^1_0} \text{ for all } h \in H^1_0
%\end{equation}
%-----------------------------------------------------------------
%-----------------------------------------------------------------
\subsection{Isotropic Gaussian random fields on the sphere}
In order to define Wiener processes properly on the sphere, 
firstly {we discuss} random fields defined on spheres.
Random fields on spheres arise in modelling the cosmic microwave
background (CMB) \cite{MarPec11}, modeling Saharan dust particles \cite{NouMuiRai03}, 
feldspar particles \cite{VeiNouKah06}, ice crystals \cite{NouMcF04} etc. 

To define a random field on $\mS^2$, 
let $(\Omega,\calA,\bbP)$ be a probability space and let
$\calB(\mS^2)$ be the Borel $\sigma$-algebra of $\mS^2$ {with respect to the usual spherical metric topology}.
A $\calA \otimes \calB(\mS^2)$-measurable mapping
$T: \Omega \times \mS^2 \rightarrow \R$ is called a (product measurable) 
real-valued random field on the unit sphere.% \YK{We further let $L^2(\Omega;H)$ denote the Bochner space of square integrable $\bbP$-strongly measurable functions.}

{A} random field is called strongly isotropic if, for all $k \in \N$,
$\bx_1,\ldots,\bx_k \in \mS^2$, and for {all} $g \in SO(3)$, (here $SO(3)$ denotes
the group of rotations on $\mS^2$), the multivariate
random variables $(T(\bx_1),\ldots,T(\bx_k))$ and
$(T(g\bx_1),\ldots,T(g \bx_k))$ have the same law.

It is called $n$-weakly isotropic for $n\ge 2$ if
$\bbE(|T(\bx)|^2) < \infty$ for all $\bx \in \mS^2$ and if for
$1 \le k \le n$, $\bx_1,\ldots,\bx_k \in \mS^2$ and $g \in SO(3)$,
\[
 \bbE(T(\bx_1) \cdots T(\bx_k)) = \bbE( T(g\bx_1) \cdots T(g\bx_k)).
\]

Furthermore, it is called Gaussian if for all $k \in \N$, $\bx_1,\ldots,\bx_k$
the random variable $(T(\bx_1),\ldots,T(\bx_k))$ is multivariate Gaussian
distributed, or equivalently, if $\sum_{i=1}^k a_i T(\bx_i)$ is a normally distributed
random variable for all $a_i \in \R$, $i=1,\ldots,k$ {for all $k\in\N$}. 

%In the remainder of the paper, we focus on real-valued Gaussian random fields on $\mS^2$.
{For Gaussian random fields, we have the following characterization of the strong isotropy.}
\begin{proposition}[Proposition 5.10 in \cite{MarPec11}]\label{prop:MP51}
Let $T$ be a Gaussian random field on $\mS^2$. Then, $T$ is strongly isotropic if and only
if T is $2$-weakly isotropic.
\end{proposition}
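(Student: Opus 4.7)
The plan is to exploit the fundamental property that a jointly Gaussian random vector is completely determined by its mean vector and covariance matrix. Since $T$ is a Gaussian random field, for any $k\in\N$ and $\bx_1,\dots,\bx_k\in\mS^2$ the vector $(T(\bx_1),\dots,T(\bx_k))$ is multivariate Gaussian, and so is $(T(g\bx_1),\dots,T(g\bx_k))$ for any $g\in SO(3)$. Consequently checking equality in law of these two vectors reduces to checking equality of their first two moments, which is exactly what $2$-weak isotropy supplies.

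For the forward implication I would simply note that if $T$ is strongly isotropic then $(T(\bx_i))_i$ and $(T(g\bx_i))_i$ share the same law; Gaussianity ensures that all moments are finite, and taking expectations of products yields $n$-weak isotropy for every $n\ge 2$, in particular for $n=2$.

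For the converse, I would fix $k\in\N$, points $\bx_1,\dots,\bx_k\in\mS^2$ and a rotation $g\in SO(3)$, and verify
\[
\bbE[T(\bx_i)] = \bbE[T(g\bx_i)] \quad\text{and}\quad \bbE[T(\bx_i)T(\bx_j)] = \bbE[T(g\bx_i)T(g\bx_j)]
\]
for every $i,j\in\{1,\dots,k\}$. The first identity is the $k=1$ instance of $2$-weak isotropy, and the second is the $k=2$ instance. Together they force the mean vectors and the covariance matrices of the two Gaussian vectors to coincide, hence the vectors themselves to be equal in law. Since $k$, the points, and $g$ were arbitrary, strong isotropy follows.

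No step poses a genuine obstacle here: the argument is essentially a one-line invocation of the fact that multivariate Gaussians are pinned down by their first two moments. The only subtle point worth flagging is that strong isotropy by itself would not guarantee finite second moments, so the Gaussian hypothesis is essential in both directions — to supply moments in the forward implication, and to upgrade moment agreement to distributional agreement in the converse.
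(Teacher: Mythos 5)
Your argument is correct: both directions are the standard moment-matching proof for Gaussian fields, using that a multivariate Gaussian vector is determined by its mean vector and covariance matrix. The paper itself gives no proof of this statement --- it is quoted directly as Proposition 5.10 of Marinucci and Peccati \cite{MarPec11} --- and your reasoning is essentially the argument given in that reference, so there is nothing to reconcile.
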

The following result is immediately obtained, 
which was originally considered for 
the spherical harmonics of the complex form in \cite[Theorem 5.13]{MarPec11}.
\red{
We note that the proof of} \cite[Theorem 5.13]{MarPec11} \red{relies on} \YK{Theorem 5.5} \red{in the same book, which relies on a group representation theorem and an orthonormal system in $L^2(\mS^2)$. }\YK{If} \red{we replace the complex inner product with a real one as in (\ref{real inner}) and the complex spherical harmonics with real spherical harmonics, then the following theorem is obtained.
}
\begin{theorem}%[Theorem 5.13 in \cite{MarPec11}]
\label{thm:MP513}
Let $T$ be a 2-weakly isotropic random field on $\mS^2$, then the following
statements hold true:
\begin{enumerate}
\item  $T$ satisfies
\[
  \int_{\mS^2} T(\bx)^2 \mathrm{d}\varsigma(\bx) < \infty,
  \quad\text{\YK{almost surely.}}
\]
\item $T$ admits a Karhunen--Lo\`{e}ve expansion
\begin{equation}\label{equ:KLofT}
   T = \sumlm \alpha_{\ell m} Y_{\ell m}, \qquad
   \alpha_{\ell m} = \int_{\mS^2} T(\by) Y_{\ell m} \mathrm{d}\varsigma(\by)
   \YK{,}
\end{equation}
\YK{where the convergence is both in the sense of the following:}
\begin{enumerate}
\item The series expansion \eqref{equ:KLofT} converges in 
$L^2(\Omega \times \mS^2; \R)$, that is, 
\[
 \lim_{L \rightarrow \infty}
 \bbE \left( \int_{\mS^2} (T(\by) - 
 \sum_{\ell=0}^L  \sum_{m=-\ell}^\ell \alpha_{\ell m} Y_{\ell m} (\by))^2
 \mathrm{d}\varsigma(\by)\right) = 0.
\]
\item The series expansion \eqref{equ:KLofT} converges in $L^2(\Omega;\R)$
for all $\bx \in \mS^2$, i.e.,
\[
\lim_{L \rightarrow \infty}
\bbE \left( (T(\bx) - 
\sum_{\ell=0}^L  \sum_{m=-\ell}^\ell \alpha_{\ell m} Y_{\ell m} (\bx) )^2 \right) = 0,\quad \text{\YK{for all $\bx\in \mS^2$.}}
\]
\end{enumerate}
\end{enumerate}
\end{theorem}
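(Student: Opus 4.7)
The plan is to reduce everything to what 2-weak isotropy gives us about the covariance kernel and then invoke completeness of the real spherical harmonics in $L^2(\mS^2)$, closely mirroring the complex-case argument of \cite[Theorem 5.13]{MarPec11}. Throughout, write $K(\bx,\by):=\bbE(T(\bx)T(\by))$; by 2-weak isotropy, $K(g\bx,g\by)=K(\bx,\by)$ for all $g\in SO(3)$, so $K(\bx,\by)=f(\bx\cdot\by)$ for some bounded measurable $f\colon[-1,1]\to\R$. In particular $\bbE(T(\bx)^2)=f(1)$ is a finite constant independent of $\bx$.

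For part (1), since $T$ is product-measurable and $\bbE(T(\bx)^2)$ is constant, Tonelli gives
\[
\bbE\!\int_{\mS^2}T(\bx)^2\,\mathrm d\varsigma(\bx)=\int_{\mS^2}\bbE(T(\bx)^2)\,\mathrm d\varsigma(\bx)=4\pi f(1)<\infty,
\]
so the integral is almost surely finite and the random coefficients $\alpha_{\ell m}$ in \eqref{equ:KLofT} are well defined.

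For part (2)(a), I would first observe that, by the completeness of $\{Y_{\ell m}\}$ in $L^2(\mS^2)$, for almost every $\omega$ the partial sums converge to $T(\omega,\cdot)$ in $L^2(\mS^2)$, with Parseval identity $\int_{\mS^2}T^2\,\mathrm d\varsigma=\sum_{\ell,m}\alpha_{\ell m}^2$ almost surely. Taking expectations and using Tonelli together with part (1) yields $\sum_{\ell,m}\bbE(\alpha_{\ell m}^2)=4\pi f(1)<\infty$, and another application of Parseval to the partial-sum error gives
\[
\bbE\!\int_{\mS^2}\Bigl(T(\by)-\sum_{\ell=0}^L\summ \alpha_{\ell m}Y_{\ell m}(\by)\Bigr)^2\mathrm d\varsigma(\by)=\sum_{\ell>L}\summ\bbE(\alpha_{\ell m}^2),
\]
which vanishes as $L\to\infty$.

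For part (2)(b), the point is to extract the spectral structure of $K$. Using Funk--Hecke together with the addition theorem \eqref{addition}, expand $f(\bx\cdot\by)=\sum_{\ell\ge 0}A_\ell\frac{2\ell+1}{4\pi}P_\ell(\bx\cdot\by)=\sum_{\ell,m}A_\ell Y_{\ell m}(\bx)Y_{\ell m}(\by)$ with coefficients $A_\ell\ge 0$ summable in the sense that $\sum_\ell A_\ell\frac{2\ell+1}{4\pi}=f(1)$. This yields the two covariance identities $\bbE(T(\bx)\alpha_{\ell m})=A_\ell Y_{\ell m}(\bx)$ and $\bbE(\alpha_{\ell m}\alpha_{\ell' m'})=A_\ell\delta_{\ell\ell'}\delta_{mm'}$, obtained by plugging in the definition of $\alpha_{\ell m}$ and using Tonelli. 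Expanding the squared error at a fixed $\bx$, the cross terms then collapse and, after invoking the addition theorem once more, one is left with
\[
\bbE\Bigl(T(\bx)-\sum_{\ell=0}^L\summ \alpha_{\ell m}Y_{\ell m}(\bx)\Bigr)^2=f(1)-\sum_{\ell=0}^L A_\ell\frac{2\ell+1}{4\pi},
\]
which tends to $0$ as $L\to\infty$ by the above summability.

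The only nontrivial step is the spectral decomposition of the rotation-invariant kernel $K$; everything else is completeness of $\{Y_{\ell m}\}$, Parseval, and Tonelli. Since real and complex spherical harmonics span the same eigenspaces of $\Delta^\ast$ and both satisfy the same addition theorem in the real form used here, the argument carries over verbatim from \cite[Theorem 5.13]{MarPec11}, which is why the result is obtained immediately once the complex inner product is replaced by the real one from \eqref{real inner}.
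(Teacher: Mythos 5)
Your argument is correct, but it is genuinely different in character from what the paper does: the paper offers no proof at all for Theorem~\ref{thm:MP513}, instead deferring to \cite[Theorem 5.13]{MarPec11} and its Theorem 5.5, which rest on a group-representation (Peter--Weyl-type) argument, with the remark that the statement carries over once the complex inner product and complex spherical harmonics are replaced by their real counterparts. You instead give a self-contained, elementary proof that bypasses representation theory entirely: you use only that $2$-weak isotropy forces $K(\bx,\by)=\bbE(T(\bx)T(\by))$ to be a zonal kernel $f(\bx\cdot\by)$ with $\bbE(T(\bx)^2)=f(1)$ constant, then Tonelli for part (1), pathwise Parseval plus Tonelli for part (2)(a), and Funk--Hecke together with the addition theorem \eqref{addition} to get $\bbE(T(\bx)\alpha_{\ell m})=A_\ell Y_{\ell m}(\bx)$ and $\bbE(\alpha_{\ell m}\alpha_{\ell'm'})=A_\ell\delta_{\ell\ell'}\delta_{mm'}$, from which the fixed-$\bx$ error telescopes to $f(1)-\sum_{\ell\le L}A_\ell\tfrac{2\ell+1}{4\pi}$. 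What your route buys is transparency and minimal machinery (everything reduces to Fubini/Tonelli and orthogonality of the real $Y_{\ell m}$), at the cost of having to supply a few routine measurability and interchange justifications that the citation-based route outsources. One step you should make explicit: the summability $\sum_\ell A_\ell\tfrac{2\ell+1}{4\pi}=f(1)$ used at the end of (2)(b) does not follow from pointwise convergence of the Legendre series of $f$ at $t=1$ (which you do not have); it follows from your part (2)(a) computation, since $\sum_{\ell}(2\ell+1)A_\ell=\sumlm\bbE(\alpha_{\ell m}^2)=\bbE\int_{\mS^2}T(\bx)^2\,\mathrm{d}\varsigma(\bx)=4\pi f(1)$. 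With that link spelled out, the proof is complete.
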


Let $T$ be a strongly isotropic random field on $\mS^2$, then {by adapting Remark 6.4 and Equation (6.6) in
\cite{MarPec11} to the real spherical harmonics}, the
\YK{collection} $\bbA = (\alpha_{\ell m}, \ell \in \N_0, m=-\ell,\ldots,\ell)$ are,
except for $\alpha_{00}$, centered random variables, i.e. $\bbE(\alpha_{\ell m})=0$
for all $\ell \in \N$ and $m=-\ell,\ldots,\ell$. Furthermore, they
are {real}-valued random variables that satisfy
\begin{equation}
\bbE(\alpha_{\ell m} \alpha_{\ell' m'})= 
A_{\ell} \delta_{\ell \ell'} \delta_{m m'}, 
\quad \ell,\ell' \in \N, \;\;|m| \le \ell, \;\;|m'|\le \ell'.
\end{equation}
For $\alpha_{00}$, it holds that
\begin{equation}
 \bbE(\alpha_{00} \alpha_{\ell m}) = (A_0 + \bbE({\alpha_{00}})^2) 
 \delta_{0 \ell} \delta_{0 m}.
\end{equation}
The sequence of non-negative real numbers $(A_\ell, \ell \in \N_0)$ is
called the angular power spectrum of $T$. 
We note that
$\bbE(T)=\bbE(\alpha_{00})Y_{00}=\bbE(\alpha_{00})\sqrt{\frac{1}{4\pi}}$. 
Combining the results of Proposition~\ref{prop:MP51} and Theorem~\ref{thm:MP513}
we obtain the following corollary.

%\footnote{{But we consider the Wiener process and thus zero-mean.}}
\begin{corollary}
Let $T$ be a 2-weakly isotropic Gaussian random field on $\mS^2$. Then
$T$ admits the Karhunen--Lo\`{e}ve expansion
\[
  T = \sumlm \alpha_{\ell m} Y_{\ell m},
\]
where $\bbA = (\alpha_{\ell m}, \ell \in \N_0, m=-\ell,\ldots,\ell)$ is 
a \YK{family} of \YK{independent} real-valued Gaussian random variables
% with
\YK{such that $\alpha_{\ell m}\sim\calN(0,A_\ell)$ for $\ell>0$ while 
$\alpha_{00}\sim\calN( \bbE(T) 2\sqrt{\pi},A_0)$.}
%\begin{enumerate}
%\item $\bbA = (\alpha_{\ell m}, \ell\in \N_0, m=-\ell,\ldots,\ell)$ is 
%a \YK{family} of independent, {real}-valued Gaussian random variables.
%\item The elements of {$\bbA$} 
%are independent.
%\item The elements of {$\bbA$} follow {$\calN(0,A_\ell)$},
%i.e. \red{they are} normally distributed with mean $0$ and variance {$A_{\ell}$} 
%for $\ell \in \N$, while 
%{$\alpha_{00}$} is 
%$\calN( \bbE(T) 2\sqrt{\pi},A_0)$ distributed.
%\footnote{{(Comment: $\bbE(T)=\bbE(\alpha_{00})Y_{00}=\bbE(\alpha_{00})\sqrt{\frac{2\cdot0+1}{4\pi}}$).}}
%\item \sout{The elements of $\bbA$ with $m<0$ are deduced from those of $\bbA_{+}$ by the formulae}
%\begin{empheq}[box=\shadebox]{align}
%\Re{a_{\ell m}} = (-1)^m \Re{ a_{\ell, -m}},\quad
%\Im{a_{\ell m}} = (-1)^{m+1} \Im{ a_{\ell, -m} }.
%\end{empheq}
%\end{enumerate}

\end{corollary}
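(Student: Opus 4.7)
The plan is to chain Proposition~\ref{prop:MP51} and Theorem~\ref{thm:MP513}, and then upgrade the second-moment conclusions that already follow from strong isotropy to full independence by invoking joint Gaussianity. First, since $T$ is Gaussian and $2$-weakly isotropic, Proposition~\ref{prop:MP51} yields that $T$ is strongly isotropic, so Theorem~\ref{thm:MP513} applies and gives the Karhunen--Lo\`eve expansion $T=\sumlm\alpha_{\ell m}Y_{\ell m}$ with its two convergence modes, together with the pathwise identity $\alpha_{\ell m}=\int_{\mS^2}T(\by)Y_{\ell m}(\by)\,\mathrm{d}\varsigma(\by)$ almost surely. The covariance identities displayed in the paragraph immediately preceding the corollary then apply verbatim.

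The crux is to show that the whole family $\bbA$ is \emph{jointly} Gaussian. For any finitely many indices $(\ell_j,m_j)$ and real coefficients $c_j$, the linear combination $\sum_j c_j\alpha_{\ell_j m_j}=\int_{\mS^2}T(\by)f(\by)\,\mathrm{d}\varsigma(\by)$, with $f:=\sum_j c_jY_{\ell_j m_j}$, can be realised as an $L^2(\Omega)$-limit of Riemann sums $\sum_i w_i T(\by_i)f(\by_i)$. Each such sum is a finite linear combination of values of $T$ and is therefore Gaussian by definition of a Gaussian random field, and $L^2(\Omega)$-limits of Gaussian random variables are again Gaussian. The $L^2(\Omega)$-convergence of the Riemann sums will be verified using the covariance function of $T$ (equivalently, the angular power spectrum) together with the continuity and boundedness of $f$ on $\mS^2$; Fubini and the almost-sure $L^2(\mS^2)$-integrability of $T$ guaranteed by Theorem~\ref{thm:MP513}(1) play a role here.

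Once joint Gaussianity is established, the rest is bookkeeping. For $\ell\geq 1$ the mean vanishes and the variance is $A_\ell$; for $\alpha_{00}$, the identity $\bbE(T)=\bbE(\alpha_{00})Y_{00}=\bbE(\alpha_{00})/\sqrt{4\pi}$ gives $\bbE(\alpha_{00})=2\sqrt{\pi}\,\bbE(T)$, and then $\mathrm{Var}(\alpha_{00})=\bbE(\alpha_{00}^2)-\bbE(\alpha_{00})^2=(A_0+\bbE(\alpha_{00})^2)-\bbE(\alpha_{00})^2=A_0$. All cross-covariances vanish by the displayed identities, so the family is pairwise uncorrelated; jointly Gaussian and pairwise uncorrelated implies mutually independent, which yields the claimed laws $\alpha_{\ell m}\sim\calN(0,A_\ell)$ for $\ell\geq 1$ and $\alpha_{00}\sim\calN(2\sqrt{\pi}\,\bbE(T),A_0)$.

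The main obstacle is the joint-Gaussianity step: the coefficients are defined only as $\mS^2$-integrals of $T$, and one must carefully verify that each such integral belongs to the $L^2(\Omega)$-closure of the Gaussian family $\mathrm{span}\{T(\bx):\bx\in\mS^2\}$. Everything else is a routine reading-off of moments from the already established strong-isotropy identities and an appeal to the standard ``uncorrelated plus jointly Gaussian implies independent'' principle.
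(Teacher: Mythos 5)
Your proposal is correct and follows essentially the route the paper intends: the paper offers no explicit proof beyond ``combining Proposition~\ref{prop:MP51} and Theorem~\ref{thm:MP513},'' and your argument is exactly that chain plus the covariance identities stated just before the corollary. The one step you rightly identify as nontrivial --- joint Gaussianity of the $\alpha_{\ell m}$ via $L^2(\Omega)$-limits of Riemann sums of values of $T$ (justified because $\sum_\ell(2\ell+1)A_\ell<\infty$ follows from the finite, constant second moment of $T$, making $K_T$ continuous) --- is the standard Marinucci--Peccati argument and closes the gap the paper leaves implicit.
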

%\sout{Furthermore, by Proposition 6.8 in \cite{MarPec11}, the elements
%of $\bbA_{+}$ for $m \ne 0$ satisfy that $\Re{a_{\ell m}}$ and
%$\Im{a_{\ell m}}$ are symmetric random variables that are equal in
%law, uncorrelated, i.e. $\bbE(\Re{a_{\ell m}} \Im a_{\ell m}) = 0$ and
%that have variance}
%\begin{empheq}[box=\shadebox]{align}\label{equ:varsqr}
% \bbE((\Re a_{\ell m})^2) =  \bbE((\Im a_{\ell m})^2) = A_{\ell}/2.
%\end{empheq}
%---------------------------------------------------------
\subsection{$L^2(\mS^2)$-valued $Q$-Wiener process}

\begin{comment}
Consider a separable Hilbert space $H$ with inner product $\inprod{\cdot}{\cdot}$
and a self-adjoint, positive definite bounded linear operator $Q:H \rightarrow H$.
Moreover, let $(\frakF_t)_{t\ge 0}$ denote a right-continuous filtration in a 
complete probability space $(\Omega,\frakF,P)$. 

A family $(W(t,h))_{t \ge 0, h \in H}$ of real-valued random variables on
$(\Omega,\frakF,P)$ is called a cylindrical Brownian motion on $H$ with covariance
$Q$, if the following properties hold:
\begin{itemize}
\item[(a)] $(\inprod{Qh}{h}^{-1/2} W(t,h))_{t\ge 0}$ is a standard one-dimensional
Brownian motion w.r.t the filtration $(\frakF)_{t \ge 0}$ for every 
$h \in H \setminus \{0\}$.
\item[(b)] For all $c_1,c_2 \in \R$, $h_1,h_2 \in H$, and every $t \ge 0$,
\[
 W(t,c_1 h_1 + c_2 h_2) = c_1 W(t,h_1) + c_2 W(t,h_2) \mbox{ holds P-a.s.}
\]
\item[(c)] $(W(t,h))_{t \ge 0}$ is a martingale w.r.t 
$(\sigma(\{W(s,g):0 \le s\le t, g \in H\}))_{t\ge 0}$ for every $h \in H$.
\end{itemize}
\end{comment}
%{Without loss of generality, from now on we consider only centered Gaussian random fields (i.e. Gaussian fields with zero mean)}.
We now define {an $H$}-valued Wiener process that is an isotropic centered Gaussian 
random field for any fixed time $t$.

%{Moved this bit:}
In the following, we assume { $\{A_\ell\}_{\ell \ge 0}$ is a given sequence of {positive} real numbers
such that}
\begin{align}\label{equ:Aell summable}
\sum_{\ell=0}^{\infty}(2\ell+1)A_\ell<\infty.
\end{align}
Then, the covariance kernel of an isotropic centered 
%\footnote{{If centred, we must have $\bbE[\alpha_{00}]=0$ anyway?}} 
Gaussian random field $T$ is well defined, and
is given by the formula
\begin{equation}\label{def:KT}
\begin{aligned}
K_T(\bx,\by) := \bbE[T(\bx)T(\by)] &= \sum_{\ell=0}^\infty A_\ell
\sum_{m=-\ell}^{\ell} Y_{\ell m} (\bx) Y_{\ell m}(\by)\\
&= \sum_{\ell=0}^\infty
 A_\ell \frac{2\ell+1}{4\pi} P_\ell(\bx \cdot \by),
\end{aligned}
\end{equation}
which in turn ensures the existence of Gaussian random fields, see for example 
\cite{Bogachev.V.I_1998_Gaussian_Measures,Lifshits.M.A_1995_Gaussian_Random_Functions}.

Let $Q\colon H\to H$ be the integral operator associated with the covariance kernel \eqref{def:KT}, 
that
is, for an element $f\in H$,
\[
Q f(\bx) = \int_{\mS^2} K_T(\bx, \by) f(\by) \mathrm{d}\varsigma(\by), \quad \bx \in \mS^2.
\]
Then, we see that
\begin{align*}
  Q Y_{L,M}(\bx) &= \int_{\mS^2} K_T(\bx,\by) Y_{L,M}(\by) \mathrm{d}\varsigma{(\by)} \\ 
%  \sum_{\ell=0}^\infty \sum_{m=-\ell}^\ell
%  \bbE(\alpha_{L,M}(1) \alpha_{\ell,m}(1)) Y_{\ell,m}  \\
     &=A_L Y_{L,M}(\bx),  \quad L=0,1,2,\ldots; |M| \le L.
\end{align*}
and thus from \eqref{equ:Aell summable} $Q$ is 
of trace class with $\mathrm{Tr}(Q)=\sum_{\ell=0}^{\infty}(2\ell+1)A_\ell<\infty$.
\begin{comment}
A 1-dimensional Wiener process $\beta(t)$ is characterized by the following properties
\begin{itemize}
\item $\beta(0) = 0$.
\item The function  $t \mapsto \beta(t)$ is almost surely everywhere continuous,
\item $\beta(t)$ has independent increments with $\beta(t) - \beta(s) \sim \calN(0,t-s)$
(for $0\le s < t$), where $\calN(\mu,\sigma^2)$ denotes the normal distribution
with mean $\mu$, variance $\sigma^2$.
\end{itemize}
\end{comment}
%In view of previous sections\footnote{{I am not too sure if the characterization is in view of the previous section. Once we fix $Q$ as we have done, the representation $W(t,\bx) = 
%\sum_{\ell=0}^\infty \sum_{m=-\ell}^\ell \alpha_{\ell m}(t) Y_{\ell m}(\bx)$ follows from the general theory of Hilbert space valued Wiener process. It is true to say $W(t,\bx) = 
%\sum_{\ell=0}^\infty \sum_{m=-\ell}^\ell \alpha_{\ell m}(t) Y_{\ell m}(\bx)$ happens to be a KL expansion for fixed $t$'s, but I am not sure if it is true to say the KL expansion of the random field tells the representation if we have all the snapshots at all $t$'s, even though I bet that is how people tried to do when they generalized the Wiener process to the infinite dimension.}}, 
{The} $Q$-Wiener process taking values 
in {$H$} can be characterized by the Karhunen--Lo\`{e}ve expansion
\begin{equation}
W(t,\bx) = 
\sum_{\ell=0}^\infty \sum_{m=-\ell}^\ell \alpha_{\ell m}(t) Y_{\ell m}(\bx),
\end{equation}
where $\{\alpha_{\ell m}\}$ is given by
\begin{align}
\alpha_{\ell m}(t):=\sqrt{A}_{\ell} w_{\ell m}(t),
\end{align}
where $\{w_{\ell m}\}$ is a system of independent standard Brownian motions 
%$\{w_{\ell m}\}$ 
that are adapted to the underlying filtration with the usual condition. 
From this representation, we see that the corresponding $Q$-Wiener 
process satisfies the following: 
for any $t\in[0,1]$ the random field $W(t,\cdot)$ is an isotropic centered Gaussian random field:
\begin{align}
\bbE[W(t,\bx)W(t,\by)]= tK_T(\bx,\by).
\end{align}
%--------------------------------------------------------------------------------------------
\section{Stochastic evolution equations on the sphere}\label{sec:SDEs} 
\subsection{Existence and uniqueness} 
{In the following, $\alpha \preceq \beta$ means that $\alpha$ can be bounded by some constant times $\beta$ uniformly with respect to any parameters on which $\alpha$ and $\beta$ may depend. Further, $\alpha\asymp \beta$ means that $\alpha \preceq \beta$ and $\beta \preceq \alpha$. }

%{of the solution and its temporal regularity}
\begin{comment}
We defined the following Lebesgue-Bochner space
\[
L^2(\Omega;H)  := \{ X:  \|X\|^2_{L^2(\Omega;H)} = \bbE \|X \|^2_H < \infty  \}
\]
The covariance Cov$(X)$ is defined by
\[
 \text{Cov}(X) := \bbE[ (X-\bbE(X)) \otimes (X-\bbE(X))]
\]
{What does this $\otimes$ mean?}

The covariance Cov corresponds (1-1) to a unique
covariance operator $Q$
\[
  \langle\text{Cov}(X), \varphi \otimes \psi\rangle_{H \otimes H}
   = \langle Q \varphi, \psi \rangle_{H} 
\]
With the spherical harmonics $Y_{\ell m}$ to be the orthonormal
basis for $H = L^2(\mS^2)$, we can show that
\end{comment}
In order to define stochastic integrals with respect to the $Q$-Wiener process defined in the previous section, 
we introduce the Hilbert space
\[
  H_0 = Q^{1/2}(H),
\]
equipped with the inner product
\[
%\inprod{Q^{1/2} h_1}{Q^{1/2} h_2}_0 = \inprod{h_1}{h_2} \quad \mbox{ for } h_1, h_2 \in H.
\eqhl{\inprod{h_1}{h_2}_{H_0}=\sum_{\ell=0}^{\infty}\sum_{m=-\ell}^{\ell}
\YK{\frac1A_{\ell}}\inprod{h_1}{Y_{\ell m}}\inprod{h_2}{Y_{\ell m}},\quad\text{for $h_1,h_2\in H_0$}.}
\]
\YK{We note $\{\sqrt{A_{\ell}}Y_{\ell m}\}$ forms a complete orthonormal system in $H_0$.}
%In what follows, $L$ is the infinitesimal generator of a %strongly continuous semigroup
%$(S(t))_{t \ge 0}$ in $H$, $\xi \in H$, 

Let $(S(t))_{t \ge 0}$ be the strongly continuous operator semigroup acting on $H$ generated by $\Delta^{\ast}$. % {with the boundary condition $\int_{\mS^2}u(\bx)\mathrm{d}\varsigma(\bx)=0$}. 
Then, we have the spectral representation
\[
  S(t) u := \sum_{\ell=0}^{\infty}\sum_{m=-\ell}^{\ell} \exp(-\mu_\ell t) \inprod{u}{Y_{\ell m}} Y_{\ell m}\qquad\text{ for }\ u\in H.
\]
See, for example \cite{sell2013dynamics}.
 
%\footnote{
%{\tt
%@book{sell2013dynamics,
%  title={Dynamics of evolutionary equations},
%  author={Sell, George R and You, Yuncheng},
%  volume={143},
%  year={2013},
%  publisher={Springer Science \& Business Media}
%}
%}
%}
Let $\calL = \calL_2(H_0,H)$ {be the space of Hilbert--Schmidt operators from $H_0$ to $H$, 
and $\|\cdot\|_{\calL}$} denote the Hilbert--Schmidt norm. 
%The space $\calL$ is equipped with the $\sigma$-algebra that is generated by 
%all mappings of the from $A \mapsto \inprod{Af}{g}$ with $f\in H_0$ and $g \in H$. 
%\footnote{$\calL$ is a normed space with the usual Hilbert--Schmidt operator norms and we can use the Borel field corresponding to this norm topology, which I feel is very natural. Are these the same $\sigma$-algebras? If so, is this a standard way of writing this? If not, why do we need to use this $\sigma$-algebra? }Let
%$
%   B : [0,1] \times H \rightarrow \calL
%$
%be a measurable mapping.\footnote{Isn't the measurability implied by the Lipschitz condition?}
%We assume that $B$ is given by pointwise multiplication of the form
%\[
%  B(u) h = T_g(u)\cdot h, \text{ for } u\in H, \quad h \in H_0,
%\]
%where $T_g(x) = g \circ x$ with 
%$g \in C^1(\R)$ such that $\|g'\|_\infty<\infty$.
We assume that $B$ is Lipschitz {continuous in the following sense:}
\begin{equation}\label{equ:Lip}
   \YK{\| B(u) - B(v)\|_{\calL} \le C_{\mathrm{Lip}} \|u-v\|
   \quad \text{ for } \quad u,v\in H,}
\end{equation}
and $B$ satisfies the {following} linear growth condition:
\begin{align}\label{equ:lin growth}
 \YK{\|B(u)\|_{\calL} \le c (1+ \|u\|), 
   \quad \text{ for } \quad u\in H,}
\end{align}
for some positive constant $c$. 
In particular, \YK{$B\colon H \rightarrow \calL$ is $\calB(H)/\calB(\calL)$-measurable}.

In this work, we restrict our consideration to the operators $B$ of the 
form
\begin{align}
B(u)h = T_g(u)\cdot \widetilde{B}h, \text{ for } u\in H, \quad h \in H_0,
\label{equ:def B}
\end{align}
where {$T_g\colon H\to H$ is the Nemytskii operator}
\[
 { T_g(u)(\bx) := g(u(\bx)),\qquad \text{for }u\in H, \quad\bx\in\mS^2,}
\]
with $g \in C^1(\R)$ such that $\|g'\|_\infty:= \sup_{r\in\mathbb{R}}|g'(r)|< \infty$, {and $\widetilde{B} \colon \YK{H_0}\to H$ is given by}
\begin{align}
{
	\widetilde{B}h:=\sumlm\eta_{\ell m} \inprod{h}{Y_{\ell m}}Y_{\ell m},
}
\end{align}
with $\{\eta_{\ell m}\}\subset\mathbb{R}$ such that $\sup_{\ell,m}|\eta_{\ell m}|<\infty$.
We note that for $u\in H$ we indeed have $T_g(u)\in H$, as 
\begin{align}
\|T_g(u)\|
\le
  \int_{\mS^2} 
    \big( 2|g(0_{H}(\bx))|^2 + 2 \|g'\|_\infty^2 |u(\bx)|^2 \big)
  \mathrm{d}\varsigma(\bx) \preceq
  1+\|u\|.
\label{equ:Tgu H bd}
\end{align}
 This generalizes \cite{GroRit07a}, where $\widetilde{B}:=I$ was considered.
{We also note for $u,v\in H$ we have}
\begin{align}
{\|T_g(u)-T_g(v)\|\le\|g'\|_{\infty}\|u-v\|.\label{equ:Tgu H Lip}}
\end{align}
Such $B$ satisfies the aforementioned conditions: for $u\in H$ we have
\begin{align}
\| &B(u) \|_{\mathcal{L}(H_0,H)}^2
=
\sumlm
\int_{\mS^2} \big| g(u(\bx))  {\eta_{\ell m}}\sqrt{A_{\ell}}Y_{{\ell m}}(\bx)\big|^2 \mathrm{d}\varsigma(\bx)\\
&\le
{\Big(\sup_{\lambda,\nu}|\eta_{\lambda \nu}|\Big)^2}
	\sum_{ \ell = 0}^{ \infty } A_{\ell}
	\int_{\mS^2} 
		\big( 2|g(0_{H}(\bx))|^2 + 2 \|g'\|_\infty^2 |u(\bx)|^2 \big)
		\sum_{ m =-\ell }^\ell
		Y_{\ell m}(\bx)Y_{\ell m}(\bx)
	\mathrm{d}\varsigma(\bx) \\
&=
\frac{{\Big(\sup_{\lambda,\nu}|\eta_{\lambda \nu}|\Big)^2}}{ 4\pi } 
  \sum_{ \ell = 0}^{ \infty } A_{\ell}(2\ell + 1 )
    \big( 8\pi |g(0)|^2 + 2 \|g'\|_\infty^2 \|u\|_H^2 \big)<\infty.
\end{align}
Further, for $u,v\in H$ we have
\begin{align}
\| &B(u)-B(v) \|_{\mathcal{L}(H_0,H)}^2
%&=
%\sumlm
%\| T_g(u)\sqrt{A_{\ell}}Y_{{\ell m}} - T_g(v)\sqrt{A_{\ell}}Y_{{\ell m}}\|_{L^2}^2 \\
=
\sumlm
\int_{\mS^2} \big|\big(g(u(\bx))-g(v(\bx))\big) {\eta_{\ell m}}\sqrt{A_{\ell}}Y_{{\ell m}}(\bx)\big|^2 \mathrm{d}\varsigma(\bx)\\
&\le
{\Big(\sup_{\lambda,\nu}|\eta_{\lambda \nu}|\Big)^2}
\|g'\|_\infty 
	\sum_{ \ell = 0}^{ \infty } A_{\ell}
	\int_{\mS^2} 
		|u(\bx) - v(\bx) |^2
		\sum_{ m =-\ell }^\ell
		Y_{\ell m}(\bx)Y_{\ell m}(\bx)
	\mathrm{d}\varsigma(\bx) \\
&= 
{\Big(\sup_{\lambda,\nu}|\eta_{\lambda \nu}|\Big)^2}
\frac{\|g'\|_\infty}{{4\pi}}
	\sum_{ \ell = 0}^{ \infty } A_{\ell}{(2\ell+1)}
	\| u - v \|_H^2,
\end{align}
and thus the Lipschitz constant in \eqref{equ:Lip} is the square root of
\begin{align}
C_{\mathrm{Lip}}^2:=	\mathrm{Tr}Q\frac{\|g'\|_\infty}{{4\pi}}{\Big(\sup_{\lambda,\nu}|\eta_{\lambda \nu}|\Big)^2}.
\end{align}
Examples of $g$ are %${T_g}(u) = u$ 
{${T_g}(u)= a u + b$ for some given real numbers $a,b$}.
%\frac{1 + u^{p+1}}{1 + \|u\|^p}\;\; \text{for some } p>0.$$
%\footnote{{It seems unclear why this $g$ satisfies the above conditions. $g$ does not seem to be even a simple composition? Integration $\|\cdot\|$ needs to be calculated? Explanation or Reference?}}
We recall the following existence and uniqueness results for the solution from \cite[Section 7.1]{DaPZab14},
which is applicable {to} our problem.
\begin{theorem}%[Da Prato and Zabczyk 1992]
Under the assumptions that $B$ is Lipschitz and satisfies
the linear growth condition, there exists an continuous process 
$(X(t))_{t\in [0,1]}$ with values in $H$  which is adapted to the underlying filtration such that
\begin{equation}\label{equ:semigroup}
X(t) = S(t) \xi + \int_0^t S(t-s) \YK{B(X(s))}\mathrm{d}W(s), 
\quad t \in [0,1] \quad \text{$\bbP$-a.s.}
\end{equation}
Moreover, this process is uniquely determined $\bbP$-a.s., and it is called
the mild solution of the stochastic evolution equation
\[
 \mathrm{d}X(t) = \Delta^{\ast} X(t) \mathrm{d}t + \YK{B(X(t))} \mathrm{d}W(t), \quad X(0) = \xi.
\]
%{where $\Delta^{\ast}$ is the Laplace--Beltrami operator with the boundary condition $\int_{\mS^2}X(\bx)\mathrm{d}\varsigma(\bx)=0$.}
\end{theorem}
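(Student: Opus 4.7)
The plan is to invoke Theorem 7.2 of Da Prato--Zabczyk \cite{DaPZab14} directly, since all its hypotheses have already been checked in the preceding paragraphs: $\Delta^{\ast}$ generates the strongly continuous contraction semigroup $(S(t))_{t\geq 0}$, the $Q$-Wiener process $W$ is constructed with $\mathrm{Tr}(Q)<\infty$, and $B\colon H\to \calL$ is both Lipschitz (with constant $C_{\mathrm{Lip}}$ computed above) and of linear growth; the Nemytskii-type form of $B$ guarantees $\calB(H)/\calB(\calL)$-measurability.

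If one prefers a self-contained argument, I would carry it out by the standard Banach fixed-point procedure. First, introduce the Banach space $\calZ$ of $(\frakF_t)$-adapted, mean-square continuous processes $X\colon[0,1]\to L^2(\Omega;H)$ equipped with the weighted norm
\[
\|X\|_{\calZ}:=\sup_{t\in[0,1]} e^{-\lambda t}\bigl(\bbE\|X(t)\|^2\bigr)^{1/2},
\]
with $\lambda>0$ to be tuned. Second, define the integral operator
\[
\calF(X)(t) := S(t)\xi + \int_0^t S(t-s)B(X(s))\,\mathrm{d}W(s),\qquad t\in[0,1].
\]
Third, verify that $\calF$ maps $\calZ$ into itself. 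Since $\mu_\ell\ge 0$, we have $\|S(t)\|_{\mathrm{op}}\le 1$, so the Hilbert-space It\^o isometry gives
\[
\bbE\Bigl\|\int_0^t S(t-s)B(X(s))\,\mathrm{d}W(s)\Bigr\|^2
=\int_0^t \|S(t-s)B(X(s))\|_{\calL}^2\,\mathrm{d}s,
\]
which is finite by the linear growth condition \eqref{equ:lin growth}; continuity of the stochastic convolution in $t$ is then obtained by a factorization argument. Fourth, the Lipschitz condition \eqref{equ:Lip} combined with the It\^o isometry yields
\[
\bbE\|\calF(X)(t)-\calF(Y)(t)\|^2 \le C_{\mathrm{Lip}}^2\int_0^t \bbE\|X(s)-Y(s)\|^2\,\mathrm{d}s,
\]
and choosing $\lambda$ sufficiently large makes $\calF$ a strict contraction on $(\calZ,\|\cdot\|_{\calZ})$; the unique fixed point is the desired mild solution.

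The main obstacle, as always in this infinite-dimensional setting, is handling the stochastic convolution rigorously: one must check progressive measurability of $s\mapsto S(t-s)B(X(s))$ as a $\calL$-valued process, justify the It\^o isometry in $\calL_2(H_0,H)$, and derive mean-square continuity in $t$ (typically via the factorization method of Da Prato--Kwapie\'n--Zabczyk). Once these technical ingredients are in place, both the self-map property and the contraction estimate reduce to the two displayed inequalities above, and uniqueness and existence follow simultaneously from the fixed-point theorem.
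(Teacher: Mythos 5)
Your primary route---invoking the existence and uniqueness theorem of Da Prato--Zabczyk after checking that $B$ is Lipschitz, of linear growth, and measurable, and that $Q$ is of trace class---is exactly what the paper does: it simply recalls the result from \cite[Section 7.1]{DaPZab14} without further proof. The supplementary fixed-point sketch is a correct outline of how that cited theorem is itself proved, but it is not needed here and the paper does not reproduce it.
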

For $p \ge 1$, 
\begin{equation}\label{equ:Ebound}
\sup_{ t \in [0,T] } \bbE\|X(t)\|^p < \infty.
\end{equation}
%For more details, see Da Prato and Zabczyk \cite{DaPZab14}(Sec. 7.1).
Let
\begin{equation}\label{equ:defX}
X(t) = 
{\sum_{\ell=0}^{\infty}\sum_{m=-\ell}^{\ell}}
X_{\ell m}(t) Y_{\ell m}, 
  \quad X_{\ell m}(t) = 
  \inprod{X(t)}{Y_{\ell m}},
\end{equation}
for $\ell\in\N_{0}$. 
The processes $X_{\ell m} = (X_{\ell m}(t))_{t \in [0,1]}$
satisfy the following bi-infinite system of stochastic
differential equations
\begin{align*}
 \mathrm{d}X_{\ell m}(t) &= -\mu_\ell X_{\ell m}(t) \mathrm{d}t 
   + \sum_{\ell'=0}^\infty 
   \sum_{m'=-\ell'}^{\ell'}
   \sqrt{A_{\ell'}}\inprod{B(X(t)) Y_{\ell'm'}}{ Y_{\ell m}} 
   \mathrm{d} w_{\ell' m'}(t) \\
 X_{\ell m}(0) &= \inprod{\xi}{Y_{\ell m}} ,
 \qquad \ell \in \N_0, \; m=-\ell,\ldots,\ell.
\end{align*}
Each process $X_{\ell m}$ is given explicitly as
\begin{equation}\label{equ:defXlm}
   X_{\ell m}(t) = \exp(-\mu_\ell t) \inprod{\xi}{Y_{\ell m}}
   + 
   \sum_{\ell'=0}^\infty \sum_{m'=-\ell'}^{\ell'}  
   \sqrt{A_{\ell'}}
   Z_{\ell m,\ell' m'}(t),
\end{equation}
where 
\begin{equation}\label{equ:defZlm}
Z_{\ell m,\ell' m'}(t) = \int_{0}^t \exp(-\mu_\ell(t-s))
  \inprod{B(X(s)) Y_{\ell' m'}}{ Y_{\ell m}} \mathrm{d} w_{\ell' m'}(s).
\end{equation}
\YK{We note that the series $\sum_{\ell'=0}^\infty \sum_{m'=-\ell'}^{\ell'}  
   \sqrt{A_{\ell'}}
   Z_{\ell m,\ell' m'}(t)$ in the second term is convergent in $L^2(\Omega)$, due to \eqref{equ:lin growth} and \eqref{equ:Ebound}.}
\subsection{Temporal regularity}
The following regularity estimate will be used for the spatial truncation error estimate, 
see Theorem \ref{thm:semidiscrete}. A similar result for the stochastic PDE defined
on $[0,1]^d$ with {the Dirichlet condition} was proved in \cite{GroRit07}. In the same spirit, we prove
the following regularity result for the stochastic PDE defined on the unit sphere.
See also \cite[Theorem 9.1]{DaPZab14} for the mean-square continuity of the solution.
\begin{lemma}\label{lem:Xcts} 
%Suppose 
%If $B(u)=g(t,u)$ with \footnote{{The asumption is $B$ is Nemytskii, but do we need this here?}}
%\begin{align}
%\|g(t,u) - g(t,v)\| &\le c\|u - v\|  \\
%\|g(s,u) - g(t,u)\| &\le c |s-t|^{1/2}(1+ \|u\|)
%\end{align}
%\footnote{{Do we use the second condition?}}
%Then 
Suppose the Lipschitz condition \eqref{equ:Lip} and the linear growth condition \eqref{equ:lin growth} are satisfied. 
Then, the mild solution is continuous in the mean-square
sense on $[0,1]$. Further, we have the estimate
\[
  \bbE\|X(s)-X(t)\|^2 \le C|t-s|(1+ \psi(\min\{s,t\})),
\]
where $\psi \in L_1([0,1])$.
\end{lemma}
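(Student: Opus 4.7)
The plan is to split, for $s<t$ (which we may assume without loss of generality), the increment of the mild solution \eqref{equ:semigroup} into three pieces
\[
X(t)-X(s) = (S(t)-S(s))\xi + \int_0^s (S(t-r)-S(s-r))B(X(r))\,\mathrm{d}W(r) + \int_s^t S(t-r)B(X(r))\,\mathrm{d}W(r),
\]
and then bound $\bbE\|\cdot\|^2$ of each piece separately using It\^{o}'s isometry for the stochastic integrals and the spectral decomposition for the semigroup differences. Throughout I will use the fact that $\sup_{r\in[0,1]}\bbE\|B(X(r))\|_\calL^2 \preceq 1$, which is immediate from the linear growth condition \eqref{equ:lin growth} together with the moment bound \eqref{equ:Ebound}.

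For the two stochastic integrals I apply It\^{o}'s isometry. The third, ``diagonal'' piece is handled by the contractivity $\|S(u)\|_{\mathrm{op}}\le 1$ together with the uniform bound above, giving a contribution $\preceq (t-s)$. For the second piece I first bound the operator norm of the semigroup difference: diagonalising in $\{Y_{\ell m}\}$ and combining $1-e^{-x}\le\min(x,1)$ with $\sup_{\ell\ge 1}\mu_\ell e^{-\mu_\ell(s-r)}\preceq (s-r)^{-1}$ gives
\[
\|S(t-r)-S(s-r)\|_{\mathrm{op}}^2 \preceq \min\!\left(1,\,\tfrac{(t-s)^2}{(s-r)^2}\right).
\]
A short direct calculation shows $\int_0^s \min(1,(t-s)^2/(s-r)^2)\,\mathrm{d}r \preceq (t-s)$, so combining this with the standard inequality $\|A\Psi\|_\calL\le\|A\|_{\mathrm{op}}\|\Psi\|_\calL$ closes this piece with a $\preceq (t-s)$ bound.

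For the deterministic piece I expand in spherical harmonics, observe that the $\ell=0$ mode vanishes since $\mu_0=0$, and use $(1-e^{-x})^2\le x$ to obtain
\[
\|(S(t)-S(s))\xi\|^2 \le (t-s)\,\psi(s), \qquad \psi(s) := \sum_{\ell=1}^\infty \sum_{m=-\ell}^\ell \mu_\ell e^{-2\mu_\ell s}\inprod{\xi}{Y_{\ell m}}^2.
\]
The remaining task is verifying $\psi\in L^1([0,1])$: interchanging sum and integral gives $\int_0^1 \psi(s)\,\mathrm{d}s = \sum_{\ell\ge 1,m}\inprod{\xi}{Y_{\ell m}}^2 \cdot \tfrac{1-e^{-2\mu_\ell}}{2} \le \tfrac12\|\xi\|^2$. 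The deterministic term is really the main obstacle here: because $\xi$ is only assumed to lie in $H$, no $s$-uniform $O(t-s)$ bound is possible, which is exactly why the statement must carry the singular weight $\psi(\min\{s,t\})$; all of the remaining estimates are routine semigroup/It\^{o}-isometry manipulations.
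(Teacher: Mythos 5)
Your argument is correct, but it takes a genuinely different route from the paper's. The paper works coefficient-wise: starting from the explicit formula \eqref{equ:defXlm} it uses the restart identity $X_{\ell m}(t)-X_{\ell m}(s)=[\exp(-\mu_\ell(t-s))-1]\,X_{\ell m}(s)+(\text{stochastic integral over }[s,t])$, so the increment splits into only \emph{two} pieces, and the resulting weight is $\psi(s)=\sum_{\ell,m}\mu_\ell\,\bbE(X_{\ell m}^2(s))$ --- a quantity built from the solution itself, whose $L_1$-integrability is extracted from the energy identity \eqref{equ:EXlm} via \eqref{equ:muXlm}. Your three-term semigroup splitting instead isolates the ``old-noise'' term $\int_0^s(S(t-r)-S(s-r))B(X(r))\,\mathrm{d}W(r)$ and kills it with the operator-norm bound $\|S(t-r)-S(s-r)\|_{\mathrm{op}}\preceq\min\bigl(1,(t-s)/(s-r)\bigr)$ together with the elementary integral $\int_0^s\min\bigl(1,(t-s)^2/(s-r)^2\bigr)\,\mathrm{d}r\preceq t-s$; the entire singular part is then carried by the purely deterministic weight $\psi(s)=\sum_{\ell\ge1,m}\mu_\ell e^{-2\mu_\ell s}\langle\xi,Y_{\ell m}\rangle^2$, with the explicit bound $\int_0^1\psi\le\tfrac12\|\xi\|^2$. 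Both proofs are sound; yours is more self-contained (no a priori energy estimate on $X$ is needed) and yields a more explicit $\psi$, whereas the paper's form of $\psi$ is the one that transfers verbatim to the discretized processes (it reappears as $\overline{\psi}$ in Lemma~\ref{lem:piecewise} and $\what{\psi}$ in Lemma~\ref{lem:Xhat_cts}), which is the structural reason the paper sets it up that way. One small point to add to your write-up: the displayed estimate yields mean-square continuity only at times where $\psi$ is finite, and if $\xi\notin H^1$ then your $\psi(0)=+\infty$, so continuity at $t=0$ does not follow from the inequality alone; it does follow immediately from your own decomposition by invoking the strong continuity of $S(\cdot)$ on the deterministic term, but that sentence needs to be said (the paper handles this with a separate $\varepsilon$--$\delta$ tail-splitting argument for $\Gamma_1$).
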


\begin{proof}
%\footnote{Comment: see Da Prato book Thm 9.1, also M\"{u}ller-Gronbach--Ritter (2007) FoCM p, 153}
\YK{First, note that we have}
\[
 \bbE \|X(s)-X(t)\|^2 = \sumlm \bbE(X_{\ell m}(s) - X_{\ell m}(t))^2.
\]
%\begin{equation}\label{equ:Xlm2}
%\begin{aligned}
%   X_{\ell m}(t) &= \exp(-\mu_\ell t) \inprod{\xi}{Y_{\ell m}} \\
%   &\qquad   + 
%   \sum_{\ell'=0}^\infty \sum_{m'=-\ell'}^{\ell'}  
%   \sqrt{A_{\ell'}}
%\int_{0}^t \exp(-\mu_\ell(t-r))
%  \inprod{B(X(r)) Y_{\ell' m'}}{ Y_{\ell m}} \mathrm{d} w_{\ell' m'}(r).
%\end{aligned}
%\end{equation}
\YK{For $s < t$, from \eqref{equ:defXlm} and \eqref{equ:defZlm} we have the identity}
\begin{align*}
X_{\ell m}(t) - X_{\ell m}(s) &= 
	\eqhl{[\exp(-\mu_\ell(t-s))-1]}X_{\ell m}(s) \\
&  + \sum_{\ell'=0}^\infty \sum_{m'=-\ell'}^{\ell'}
\int_{s}^{t}\exp(-\mu_\ell (t-r))
\sqrt{A_{\ell'}}\inprod{B(X(r)) Y_{\ell' m'}}{Y_{\ell m}}
\mathrm{d} w_{\ell' m'}(r),
\end{align*}

%\sout{where }
%\begin{empheq}[box=\shadebox]{align}
%a_{\ell',m'} = 
%\begin{cases}
%\sqrt{A_{\ell'}} \beta^{(1)}_{\ell',0}(t), & \text{ if  } m'=0 \\
%\sqrt{A_{\ell'}/2} (\beta^{(1)}_{\ell',m'}(t) + 
%i \beta^{(2)}_{\ell',m'}(t)) & \text{ if } m'  > 0,\\
%\sqrt{A_{\ell'}/2} (-1)^m({\beta^{(1)}_{\ell',-m'}(t)} - 
%i {\beta^{(2)}_{\ell',-m'}(t)}) & \text{ if } m'  < 0,
%\end{cases}
%\end{empheq}

%\sout{with $\beta^{(1)}(t)$ and $\beta^{(2)}(t)$ are $1$-dimensional real valued Wiener processes.}
By the It\^{o}'s isometry, we have
\begin{align*}
 \bbE(X_{\ell m}(s) - X_{\ell m}(t))^2 &=
 \eqhl{[\exp(-\mu_\ell (t-s))-1]^2}
 \bbE(X^2_{\ell m}(s)) \\
&+ \int_s^t \exp (-2\mu_\ell (t-r)) 
\bbE \| B^*(X(r)) Y_{\ell m} \|^2_{H_0} \mathrm{d}r,
\end{align*}
\YK{with}
\begin{comment}
\footnote{
{Comment: Let $\calB(H_0,H)$ be the vector space of bounded linear operators from the Hilbert spaces $H_0$ to the Hilbert space $H$. 
$B\in \calL(H_0,H)\implies 
B\in \calB(H_0,H) \implies
B^* \in \calB(H,H_0)\text{ and } (B^*)^*=B.
$ Therefore}
\begin{empheq}[box=box]{align*}
&\sum_{\ell'=0} \Big( A_{\ell'} |\inprod{B(X)Y_{\ell' 0}}{Y_{\ell m}}_{H}|^2+ 
    {A_{\ell'}} \sum_{|m'|\le \ell', m' \ne 0} 
    |\inprod{B(X) {Y_{\ell' m'}}} { {Y_{\ell m} }}_{H}|^2 \Big)\\
&
=
\sum_{\ell'=0} \Big(
		|\inprod{B(X) \sqrt{A_{\ell'}} Y_{\ell'0}}{ Y_{\ell m}}_{H}|^2+ 
		\sum_{|m'|\le \ell', m' \ne 0} 
    |\inprod{B(X) \sqrt{A_{\ell'}} Y_{\ell' m'}} { Y_{\ell m} }_{H}|^2 \Big) \\
&
=
\sum_{\ell'=0} \Big(
		|\inprod{\sqrt{A_{\ell'}} Y_{\ell' 0}}{ B^*(X)Y_{\ell m}}_{H_0}|^2+ 
		\sum_{|m'|\le \ell', m' \ne 0} 
    |\inprod{ \sqrt{A_{\ell'}} Y_{\ell' m'}} { B^*(X) Y_{\ell m} }_{H_0}|^2 \Big) \\
&=
\sum_{\ell'=0}^\infty
		\sum_{|m'|\le \ell'} 
    |\inprod{ B^*(X) Y_{\ell m} }{ \sqrt{A_{\ell'}} Y_{\ell' m'}} _{H_0}|^2
=
\|B^*(X) Y_{\ell,m}\|_{H_0}^2.
\end{empheq}
}% end footnote
\end{comment}
%
\begin{align}
\|B^*(X) Y_{\ell,m}\|_{H_0}^2 =
\sum_{\ell'=0} 
% A_{\ell'} 
%		|\inprod{B(X){Y_{\ell' 0}}}{{Y_{\ell m}}}|^2+ 
    A_{\ell'} \sum_{|m'|\le \ell'} 
    |\inprod{B(X) Y_{\ell' m'}}{ Y_{\ell m} }|^2\YK{,}
\end{align}
\YK{where $B^*(x):=(B(x))^*\colon H\to H_0$ for $x\in H$ denotes the adjoint operator of $B(x)$.} 
\begin{comment}
&\|B^*(X) Y_{\ell,m}\|_{H_0}^2 {=} \\
&\quad\sum_{\ell'=0} 
\left( A_{\ell'} 
		|\inprod{B(X){Y_{\ell' 0}}}{{Y_{\ell m}}}|^2+ 
    {A_{\ell'}} \sum_{|m'|\le \ell', m' \ne 0} 
    |\inprod{B(X) {Y_{\ell' m'}} } { {Y_{\ell m} }}|^2 \right).
\end{comment}
Similarly, we also have
\begin{equation}\label{equ:EXlm}
\begin{aligned}
 \bbE(X_{\ell m}(s))^2 &= \exp(-2 \mu_\ell s) \inprod{\xi}{Y_{\ell m}}^2 \\
 & + \int_0^s \exp(-2\mu_\ell (s-r)) 
 \bbE \| B^*(X(r)) Y_{\ell m}\|^2_{H_0} \mathrm{d}r.
\end{aligned}
\end{equation}
Put
\[
	\Gamma_1 = 
	\sumlm \eqhl{[\exp(-\mu_{\ell}(t-s))-1]^2} \bbE ( X_{\ell m}^2(s))
\]
and
\[
\Gamma_2 = \sumlm \int_s^t \exp(-2\mu_\ell(t-r)) 
\bbE \|B^*(X(r)) Y_{\ell m}\|^2_{H_0} \mathrm{d}r.
\]
We use \eqref{equ:Ebound} and the linear growth condition to obtain
\begin{equation}\label{equ:Gamma2}
\begin{aligned}
\Gamma_2 &\le \bbE\bigg( \int_{s}^t \sumlm 
 \|B^*(X(r)) Y_{\ell m}\|_{H_0}^2 \eqhl{\mathrm{d}r} \bigg)\\
 &=\bbE\bigg( \int_{s}^t  
 \|B^*(X(r))\|^2_{\calL(H,H_0)} \eqhl{\mathrm{d}r} \bigg) \le C(t-s).
\end{aligned}
\end{equation}
Fix $\varepsilon>0$ arbitrarily. Then, for sufficiently large $L_0$ we have
$$
\sup_{s\in[0,1]}\sum_{\ell=L_0+1}^{\infty}\sum_{m=-\ell}^{\ell}\bbE ( X_{\ell m}^2(s))<\frac{\varepsilon}{2}.
$$
Further, we can take $\delta>0$ such that for any $|t-s|<\delta$ and for any $1\le\ell\le L_0$ we have
$$
[\exp(-\mu_{\ell}(t-s))-1]^2 \bbE ( X_{\ell m}^2(s))<\frac{\varepsilon}{2L_0}.
$$ 
Thus, for such $s,t$ we have
$
\Gamma_1\le \varepsilon
$ , and thus together with \eqref{equ:Gamma2} the mean square continuity follows.

Now, since $1-\exp(-x) \YK{\le x}$ we have
$
\YK{\Gamma_1 \le \sum_{\ell=0}^{\infty} \mu_\ell(t-s) \bbE( X_{\ell m}^2(s))}
$, 
\YK{where the series is well defined since each term is non-negative.} 
Therefore, $\Gamma_1 \le (t-s) \psi(s)$ with 
\[
\psi(s):=\sumlm \mu_{\ell} \bbE(X^2_{\ell m}(s)).
\]
Since
\begin{align}\label{equ:muXlm}
\mu_{\ell} \int_0^1 \bbE(X^2_{\ell m}(s)) \mathrm{d}s 
\le \inprod{\xi}{Y_{\ell m}}^2
+ \int_0^1 \bbE \|B^*(X(r)) Y_{\ell m}\|^2_{H_0} \mathrm{d}r
\end{align}
we have 
\begin{equation}\label{equ:psiL1}
\psi \in L_1([0,1]).
\end{equation}
\end{proof}
\subsection{Isotropy of the solution}
The equation \eqref{equ:main} is driven by the Wiener process that is 2-weakly isotropic at each $t$. Thus, whether or not the isotropy propagates to the solution is of natural interest. In this section, we see that the solution does not necessarily have the isotropy in general. To see this, in this section we consider the Nemytskii operator $T_g$ with affine functions $g(x)=ax+b$, for some $a,b\in\mathbb{R}$.

We start from the following relation between the 2-weak isotropy and the eigenvalues of covariance operators.
\begin{proposition}
Let $Z=\{Z(\bx)\}_{\bx\in\mS^2}$ be a zero-mean random field on $\mS^2$ such that its  covariance function $K_{Z}(\cdot,\cdot)\colon \mS^2\times\mS^2\to\R$ is well-defined on all points $\mS^2\times\mS^2$, and $Z$ is $\mathcal{B}(\mS^2)\otimes\mathscr{F}/\mathcal{B}(\R)$-measurable. Then, $Z$ is $2$-weakly isotropic if and only if $K_{Z}\in L^2(\mS^2\times\mS^2)$ and the covariance operator $Q_Z$ defined as the integral operator $$H\ni h \mapsto Q_Z h:=\int_{\mS^2}K_{Z}(\cdot,\bx)h(\bx)\mathrm{d}\varsigma(\bx)\in H$$ 
has eigenfunctions $\{Y_{\ell m};{\ell\in\N_0,m=-\ell,\ldots,\ell\}}$ 
with eigenvalues independent of $m$.
\end{proposition}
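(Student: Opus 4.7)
My plan is to treat the two implications separately, with the addition theorem \eqref{addition} serving as the bridge between the geometric notion of invariance under rotation and the algebraic notion of eigenvalues depending only on $\ell$.

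For the forward direction, suppose $Z$ is 2-weakly isotropic. First, isotropy forces $\sigma^2:=\bbE[Z(\bx)^2]$ to be independent of $\bx$, so Cauchy--Schwarz gives $|K_Z(\bx,\by)|\le \sigma^2$ pointwise, which in particular places $K_Z$ in $L^2(\mS^2\times\mS^2)$. Second, the defining identity of 2-weak isotropy applied with $k=2$ gives $K_Z(g\bx,g\by)=K_Z(\bx,\by)$ for every $g\in SO(3)$. Since $SO(3)$ acts transitively on pairs of points of $\mS^2$ with a fixed inner product, the value $K_Z(\bx,\by)$ depends only on $\bx\cdot\by$, and one can define $k\colon[-1,1]\to\R$ by $K_Z(\bx,\by)=k(\bx\cdot\by)$. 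A change of variables shows $k\in L^2([-1,1])$, so one expands $k$ in Legendre polynomials, $k(t)=\sum_{\ell}b_\ell P_\ell(t)$, and invokes the addition theorem \eqref{addition} to conclude $K_Z(\bx,\by)=\sumlm a_\ell Y_{\ell m}(\bx)Y_{\ell m}(\by)$ with $a_\ell=\frac{4\pi}{2\ell+1}b_\ell$. A direct computation then shows $Q_Z Y_{\ell m}=a_\ell Y_{\ell m}$, as required.

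For the converse, assume $K_Z\in L^2(\mS^2\times\mS^2)$ and that each $Y_{\ell m}$ is an eigenfunction of $Q_Z$ with eigenvalue $a_\ell$ depending only on $\ell$. Since the tensor products $\{Y_{\ell m}\otimes Y_{\ell' m'}\}$ form a complete orthonormal basis of $L^2(\mS^2\times\mS^2)$, expanding $K_Z$ in this basis and computing the Fourier coefficients via the eigenfunction relation yields $K_Z(\bx,\by)=\sumlm a_\ell Y_{\ell m}(\bx)Y_{\ell m}(\by)$ in the $L^2$ sense. Applying the addition theorem degree-by-degree converts this into $\sum_{\ell}a_\ell\frac{2\ell+1}{4\pi}P_\ell(\bx\cdot\by)$, a function that manifestly depends only on $\bx\cdot\by$ and is therefore invariant under the diagonal action of $SO(3)$. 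Combined with the zero-mean assumption this gives 2-weak isotropy of $Z$.

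The main obstacle is reconciling the pointwise definition of 2-weak isotropy with the $L^2$ nature of the spectral expansion in the converse direction: the identity $K_Z=\sumlm a_\ell Y_{\ell m}\otimes Y_{\ell m}$ holds in $L^2(\mS^2\times \mS^2)$ and hence only almost everywhere. To conclude pointwise equality one needs either to invoke the measurability and well-definedness of $K_Z$ to pass to a canonical rotation-invariant representative (using that the Cesàro-type partial sums $\sum_{\ell\le L}a_\ell\frac{2\ell+1}{4\pi}P_\ell(\bx\cdot\by)$ give such a representative), or to argue via the distributional characterization of 2-weak isotropy that only an almost-everywhere identity for $K_Z$ is required. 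I would handle this by observing that, under the summability built into $K_Z\in L^2$ together with the eigenvalue structure, the resulting rotation-invariant version of $K_Z$ is the unique continuous-in-$\bx\cdot\by$ representative, which can be taken as the covariance of $Z$ without loss of generality.
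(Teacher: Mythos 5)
Your proof follows essentially the same route as the paper's: Cauchy--Schwarz plus the constancy of $\bbE[Z(\bx)^2]$ to get $K_Z\in L^2(\mS^2\times\mS^2)$, the addition theorem \eqref{addition} to pass between rotation-invariance of the kernel and the diagonal eigenstructure $Q_ZY_{\ell m}=a_\ell Y_{\ell m}$, and a tensor-basis expansion of $K_Z$ for the converse. Your closing remarks on reconciling the almost-everywhere $L^2$ identity with the pointwise definition of $2$-weak isotropy address a point the paper's proof silently glosses over, but this is a refinement of the same argument rather than a different approach.
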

\begin{proof}
Since $Z$ is $2$-weakly isotropic, letting $\bx_{\mathrm{n}}$ be the north pole we have $K_{Z}\in L^2(\mS^2\times\mS^2)$:
\begin{align}
\int_{\mS^2}\!\!\int_{\mS^2}|K_{Z}(\bx_1,\bx_2)|^2\mathrm{d}\varsigma(\bx_1)\mathrm{d}\varsigma(\bx_2)
&\le 
\int_{\mS^2}\!\!\int_{\mS^2}
|\bbE[Z(\bx_1)^2]||\bbE[Z(\bx_2)^2]|
\mathrm{d}\varsigma(\bx_1)\mathrm{d}\varsigma(\bx_2)\\
&=
16\pi^2
|\bbE[Z(\bx_{\mathrm{n}})^2]|^2<\infty.
\end{align}
Thus we have the expansion of $K_{Z}$ as 
\begin{align}
K_{Z}(\bx_1,\bx_2)=\sum_{\ell=0}^\infty \frac{2\ell+1}{4\pi}A_{\ell}P_{\ell}(\bx_1\cdot\bx_2)
=\sumlm A_{\ell} Y_{\ell m}(\bx_1)Y_{\ell m}(\bx_2),
\end{align}
with some sequence $\{A_{\ell}\}$, where in the second equality the addition theorem is used. Thus, we immediately see $Q_Z$ has the eigenpair $(A_{\ell},Y_{\ell m})$.
Conversely, if we have $K_{Z}\in L^2(\mS^2\times\mS^2)$, then we have the representation
$$
K_{Z}(\bx_1,\bx_2)
=\sum_{\lambda=0}^{\infty}\sum_{\nu=-\lambda}^{\lambda}
\sum_{\lambda'=0}^{\infty}\sum_{\nu'=-\lambda'}^{\lambda'}
C_{\lambda \nu \lambda'\nu'} Y_{\lambda \nu}(\bx_1) Y_{\lambda' \nu'}(\bx_2),
$$
for some $\{C_{\ell m \ell'm'}\}$ in $L^2(\mS^2\times\mS^2)$. From the assumption we have
$$
Q_Z Y_{\ell m}=
\int_{\mS^2}K_{Z}(\cdot,\bx)Y_{\ell m}(\bx)\mathrm{d}\varsigma(\bx)=A_{\ell} Y_{\ell m},$$
thus
$
\sum_{\lambda=0}^{\infty}\sum_{\nu=-\lambda}^{\lambda}
C_{\lambda \nu \ell m} Y_{\lambda \nu}=A_{\ell} Y_{\ell m}
$. Multiplying $\{Y_{\ell m}\}$ to both sides and integrating over $\mS^2$ yields $C_{\ell m \ell m} =A_{\ell}$ and $C_{\lambda \nu \ell m} =0$ unless $\lambda=\ell$ and $\nu=m$. 
\end{proof}
From the previous result, we expect that the isotropy should be preserved as long as the isotropic noise $W$ is acted {upon} by operators that are diagonalised by $\{Y_{\ell,m}\}$ 
%such that 
{and} their eigenvalues do not depend on $m$'s. Now, we note that the mapping $B$ is defined by 
the pointwise multiplication and a Nemytskij operator. This makes the analysis difficult, 
because non-trivial multiplication operators cannot be diagonalised by $\{Y_{\ell,m}\}$ as 
we see in the next proposition.

Let $f\in H$ {and} the multiplication operator $\mulop_{f}\colon H_{0}\to H$ {be} defined by
$$
(\mulop_{f}h)(\bx)=f(\bx)h(\bx)\quad \text{ for }h\in H_{0}.
$$
%satisfies $\mulop_{f}\in \mathcal{L}$.
\begin{proposition}
Suppose $f\in {H}$ defines a multiplication operator {such that} $\mulop_{f}\in \mathcal{L}$. Suppose {further} that $f$ is not a constant function on $\mS^2$. 
Then, $\mulop_{f}$ cannot be diagonalised by $\{Y_{\ell m}\}$. In particular, 
$\mulop_{f}$ cannot not have $\{Y_{\ell m}\}$ as eigenfunctions.
\end{proposition}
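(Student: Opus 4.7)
The plan is to argue by contradiction: suppose $\mulop_{f}$ has some spherical harmonic $Y_{\ell m}$ as an eigenfunction with eigenvalue $\lambda \in \R$. By the definition of $\mulop_{f}$ this means $f(\bx)\, Y_{\ell m}(\bx) = \lambda\, Y_{\ell m}(\bx)$ for $\varsigma$-almost every $\bx \in \mS^2$, i.e., $(f-\lambda)\, Y_{\ell m} = 0$ almost everywhere on $\mS^2$. The key step is to show that the zero set of $Y_{\ell m}$ has surface measure zero; the conclusion $f \equiv \lambda$ then follows immediately, contradicting the hypothesis that $f$ is non-constant.

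For the simplest implementation of the argument, I would take $\ell = m = 0$: since $Y_{00} = 1/\sqrt{4\pi}$ is a strictly positive constant on $\mS^2$, the identity $f/\sqrt{4\pi} = \lambda/\sqrt{4\pi}$ holds pointwise almost everywhere and yields $f \equiv \lambda$ with no measure-theoretic work required. This single instance already rules out $\{Y_{\ell m}\}$ being a complete family of eigenfunctions of $\mulop_{f}$, and hence precludes $\mulop_{f}$ from being diagonalised in this basis.

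The main (and rather mild) obstacle, if one wishes to establish the stronger statement that no individual $Y_{\ell m}$ with $\ell \ge 1$ can be an eigenfunction, is justifying that the zero set of such a $Y_{\ell m}$ has $\varsigma$-measure zero. This can be handled either by invoking the general principle that a nontrivial real-analytic function on the connected real-analytic manifold $\mS^2$ has zero set of measure zero, or, more concretely, by noting from the explicit formulae for $Y_{\ell m}(\vartheta,\varphi)$ that it factors as an associated Legendre function in $\cos\vartheta$ times $\sin(m\varphi)$ or $\cos(m\varphi)$, each factor having only finitely many zeros on its respective interval, so the zero set is a finite union of latitudinal and longitudinal circles.
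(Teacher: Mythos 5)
Your proof is correct, but it takes a genuinely different route from the paper's. The paper assumes $\mulop_f h=\sum_{\ell,m}c_{\ell m}\inprod{h}{Y_{\ell m}}Y_{\ell m}$, reads off the eigenvalue relation $fY_{\lambda\nu}=c_{\lambda\nu}Y_{\lambda\nu}$ for every $(\lambda,\nu)$, and then integrates it over $\mS^2$: since $\int_{\mS^2}Y_{\lambda\nu}\,\mathrm{d}\varsigma=0$ for $\lambda\ge 1$, every Fourier coefficient of $f$ of degree $\ge 1$ vanishes, so $f\in\mathrm{span}\{Y_{00}\}$ --- a contradiction. That argument needs the \emph{whole} family of degree-$\ge 1$ harmonics to be eigenfunctions and trades entirely on orthogonality, with no measure theory. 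You instead isolate the single relation for $Y_{00}$: since $Y_{00}=1/\sqrt{4\pi}$ is a nonvanishing constant, $fY_{00}=\lambda Y_{00}$ forces $f\equiv\lambda$ a.e.\ at once, which already kills diagonalisability; this is shorter and uses strictly less of the hypothesis. Your supplementary observation --- that in fact \emph{no individual} $Y_{\ell m}$ can be an eigenfunction, because its zero set is a finite union of latitude and meridian circles (or, more abstractly, because a nontrivial real-analytic function on a connected analytic manifold vanishes only on a null set), so $(f-\lambda)Y_{\ell m}=0$ a.e.\ still forces $f\equiv\lambda$ --- is a genuinely stronger conclusion than the proposition states, and is something the paper's integration argument does not deliver, since integrating a single relation of degree $\ell\ge 1$ only tells you $\inprod{f}{Y_{\ell m}}$ is a multiple of $\int Y_{\ell m}=0$. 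Both proofs are sound; the paper's is the more ``algebraic'' one, yours the more pointwise one with a slightly sharper payoff.
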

\begin{proof}
\begin{sloppypar}
Suppose the $\mulop_{f}$ can be diagonalised by $Y_{\ell m}$, i.e., for $h\in H$ we have
$
\mulop_{f}h = fh= \sumlm c_{\ell m}\inprod{h}{Y_{\ell m}}Y_{\ell m}
$ with some $\{c_{\ell m}\}\subset \mathbb{R}$. In particular, 
 we have 
$
(\mulop_{f}h)(\bx)=f(\bx) Y_{\lambda\nu}(\bx)=c_{\lambda \nu} Y_{\lambda\nu}(\bx).
$ 
For all $\lambda>1$, $\nu=-\lambda,\dots,\lambda$, integrating both sides over $\mS^2$ yields 
\begin{align}
\int_{\mS^2}f(\bx) Y_{\lambda\nu}(\bx)\mathrm{d}\varsigma(\bx)=0.
\end{align}
\end{sloppypar}
\noindent Thus, we must have $f\in \mathrm{span}\{Y_{00}\}$, which contradicts the assumption.
\end{proof}
\subsubsection{Equations for the second moment}
To study the propagation of 2-weak isotropy, we formulate the equations for the second moment---the equation that has the covariance function of the solution $X$ as the solution---as an abstract Cauchy problem in $L^2(\mS^2\times\mS^2)$.

In the following we assume the initial condition $\xi$ is constant over $\mS^2$ so that the deterministic random field $\xi$ is 2-weakly isotropic.

Let $S\diamond BW:=\int_0^{\cdot} S(t-s) {B}(X(s))\mathrm{d}W(s)$. Then, 
$
X(t)
=
S(t)\xi+
(S\diamond BW)(t)
\in H$, and $\bbE[S\diamond BW]=0$. Since $\bbE[X(t)]=S(t)\xi$, we see that $\mS^2\ni\bx\mapsto\bbE[X(t,\bx)]$ is a constant function given that $\xi$ is constant over $\mS^2$. Thus, to show the 2-weak isotropy of $X(t,\cdot)$, it suffices to {show that for any fixed $t>0$} the covariance of $X(t,\bx_1)$ and $X(t,\bx_2)$ is rotationally invariant for any $\bx_1,\bx_2\in\mS^2$.

We start with the following formula.
\begin{lemma}
For any $\ell,\ell'\ge0$, {$|m| \le \ell$, $|m'|\le \ell'$}, we have 
\begin{align}
&\mathbb{E}\Big[
	\big\langle
	X(t)-S(t)\xi
	,Y_{\ell m}
	\big\rangle 
	\big\langle
	X(t)-S(t)\xi
	,Y_{\ell' m'}
	\big\rangle 
\Big]\nonumber\\
&=
\sum_{\lambda=0}^{\infty} \sum_{\nu=-\lambda}^{\lambda}
\mathbb{E}\bigg[
\int_{0}^t
\inprod{\sqrt{A}_{\lambda} S(t-s) B(X(s))Y_{\lambda \nu}}{Y_{\ell m}}
\inprod{\sqrt{A}_{\lambda} S(t-s) B(X(s))Y_{\lambda \nu}}{Y_{\ell' m'}}
\mathrm{d}s\bigg].\label{eq:E fourier coeff expansion}
\end{align}
\end{lemma}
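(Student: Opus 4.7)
My plan is to expand the stochastic integral representation of $X(t)-S(t)\xi$ using the Karhunen--Loève expansion of the $Q$-Wiener process, then apply It\^o's isometry together with independence of the driving Brownian motions $w_{\lambda\nu}$ across $(\lambda,\nu)$.

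First, from the mild formulation \eqref{equ:semigroup} we have
\[
X(t)-S(t)\xi = \int_0^{t} S(t-s)B(X(s))\,\mathrm{d}W(s).
\]
Using the expansion $W(t)=\sum_{\lambda,\nu}\sqrt{A_{\lambda}}\,w_{\lambda\nu}(t)Y_{\lambda\nu}$, this stochastic integral decomposes as
\[
X(t)-S(t)\xi = \sum_{\lambda=0}^{\infty}\sum_{\nu=-\lambda}^{\lambda}
\int_0^t \sqrt{A_{\lambda}}\, S(t-s)B(X(s))Y_{\lambda\nu}\,\mathrm{d}w_{\lambda\nu}(s),
\]
with convergence in $L^2(\Omega;H)$ guaranteed by the Hilbert--Schmidt property of $B(X(s))\circ Q^{1/2}$ (which follows from the linear growth \eqref{equ:lin growth} and the moment bound \eqref{equ:Ebound}). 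Taking the inner product with $Y_{\ell m}$, and commuting the bounded linear functional $\langle\cdot,Y_{\ell m}\rangle$ with the It\^o integral and the $L^2(\Omega)$-convergent sum, yields
\[
\langle X(t)-S(t)\xi,Y_{\ell m}\rangle
= \sum_{\lambda,\nu}\int_0^t \sqrt{A_{\lambda}}\,
\langle S(t-s)B(X(s))Y_{\lambda\nu},Y_{\ell m}\rangle\,\mathrm{d}w_{\lambda\nu}(s),
\]
and similarly for $Y_{\ell' m'}$.

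Next, I would multiply the two expansions and take expectation. Since the scalar Brownian motions $\{w_{\lambda\nu}\}$ are independent across $(\lambda,\nu)$, a standard argument based on the It\^o isometry for scalar integrals shows that the expectation of a product $\mathbb{E}[I_{\lambda\nu}^{\ell m}\,I_{\lambda'\nu'}^{\ell'm'}]$ vanishes whenever $(\lambda,\nu)\neq(\lambda',\nu')$, and for $(\lambda,\nu)=(\lambda',\nu')$ equals
\[
\mathbb{E}\int_0^{t} A_{\lambda}\,
\langle S(t-s)B(X(s))Y_{\lambda\nu},Y_{\ell m}\rangle
\langle S(t-s)B(X(s))Y_{\lambda\nu},Y_{\ell' m'}\rangle\,\mathrm{d}s.
\]
Summing over $(\lambda,\nu)$ and absorbing $A_{\lambda}$ into the factors $\sqrt{A_{\lambda}}$ inside each inner product gives exactly the right-hand side of \eqref{eq:E fourier coeff expansion}.

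The only delicate point, and what I expect to be the main obstacle, is the justification of the interchange of the infinite sum over $(\lambda,\nu)$, the inner products against $Y_{\ell m}$ and $Y_{\ell' m'}$, and the expectation of the cross-term products. This is handled by noting that $\sum_{\lambda,\nu}\|\sqrt{A_{\lambda}}S(t-\cdot)B(X)Y_{\lambda\nu}\|^2 = \|S(t-\cdot)B(X)\|_{\calL}^2$ is integrable in $s$ with finite expectation, so the partial sums of the series form a Cauchy sequence in $L^2(\Omega;H)$; Cauchy--Schwarz in $L^2(\Omega)$ then justifies passing the expectation and the inner-product pairing term-by-term through the sum, after which the It\^o isometry and the orthogonality relations of the $w_{\lambda\nu}$'s conclude the proof.
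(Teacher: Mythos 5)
Your proposal is correct and follows essentially the same route as the paper's proof: expand $\langle X(t)-S(t)\xi, Y_{\ell m}\rangle$ as an $L^2(\Omega)$-convergent series of scalar It\^{o} integrals against the independent Brownian motions $w_{\lambda\nu}$, then use the vanishing of the quadratic covariations for distinct indices together with It\^{o}'s isometry to reduce the expectation of the product to the diagonal terms. The extra care you take in justifying the interchange of the sum, the inner products, and the expectation is a welcome elaboration of a step the paper passes over quickly, but it is not a different argument.
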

\begin{proof}
Since $X(t)=S(t)\xi+\int_0^{t} S(t-s) {B}(X(s))\mathrm{d}W(s)$, 
\begin{comment}
\mathbb{E}\Bigg[
	\Big\langle
	X(t)
	,Y_{\ell' m'}
	\Big\rangle 
	\Big\langle
	X(t)
	,Y_{\ell m}
	\Big\rangle 
\Bigg]
&=\mathbb{E}\Bigg[
	\Big\langle
	(S\diamond BW)(t)
	,Y_{\ell' m'}
	\Big\rangle 
	\Big\langle
	(S\diamond BW)(t)
	,Y_{\ell m}
	\Big\rangle 
\Bigg].
\end{comment}
we have that for any $h\in H$ 
\begin{align}
\big\langle
	X(t)-S(t)\xi
	,h
	\big\rangle 
=
\sum_{\lambda=0}^{\infty} \sum_{\nu=-\lambda}^{\lambda}
\int_{0}^t\inprod{\sqrt{A}_{\lambda} S(t-s) B(X(s))Y_{\lambda \nu}}{h}\mathrm{d}w_{\lambda \nu}(s),
\end{align}
with the series convergent in $L^{2}(\Omega)$.

Since $\{w_{\lambda \nu}\}$ are independent standard Brownian motions and 
thus their quadratic % {covariance} 
{covariations}
vanish  unless the indices $\lambda$ and $\nu$ coincide, the It\^{o}'s isometry implies
\begin{align}
&\mathbb{E}\bigg[
\int_{0}^t
\inprod{\sqrt{A}_{\lambda}S(t-s) B(X(s))Y_{\lambda \nu}}{h}\mathrm{d}w_{\lambda \nu}(s)
\int_{0}^t
\inprod{\sqrt{A}_{\lambda'}S(t-s) B(X(s))Y_{\lambda' \nu'}}{h'}\mathrm{d}w_{\lambda' \nu'}(s)\bigg]\nonumber\\
&
=
\delta_{\lambda\lambda'}\delta_{\nu\nu'}
\mathbb{E}\bigg[
\int_{0}^t
\inprod{\sqrt{A}_{\lambda}S(t-s) B(X(s))Y_{\lambda \nu}}{h}
\inprod{\sqrt{A}_{\lambda}S(t-s) B(X(s))Y_{\lambda \nu}}{h'}\mathrm{d}s\bigg].
\end{align}
From these two facts we have
\begin{align*}
&\mathbb{E}\Big[
	\big\langle
	X(t)-S(t)\xi
	,Y_{\ell m}
	\big\rangle 
	\big\langle
	X(t)-S(t)\xi
	,Y_{\ell' m'}
	\big\rangle 
\Big]\nonumber\\
&=
\sum_{\lambda=0}^{\infty} \sum_{\nu=-\lambda}^{\lambda}
\mathbb{E}\bigg[
\int_{0}^t
\inprod{\sqrt{A}_{\lambda} S(t-s) B(X(s))Y_{\lambda \nu}}{Y_{\ell m}}
\inprod{\sqrt{A}_{\lambda} S(t-s) B(X(s))Y_{\lambda \nu}}{Y_{\ell' m'}}
\mathrm{d}s\bigg],
\end{align*}
which completes the proof.
\end{proof}
%The above lemma does not use the specific form of the function $g(x)=x$. 
Now we assume  $g(x)=x$ for $x\in\mathbb{R}$. That is,
 $(B(u)h)(\bx)=u(\bx)(\widetilde{B}h)(\bx)$. 
Noting that $\sup_{s\in[0,1]}\|X(s)\|^2<\infty$, we have 
\begin{align}
&\mathbb{E}\Big[
	\big\langle
	X(t)-S(t)\xi
	,Y_{\ell m}
	\big\rangle 
	\big\langle
	X(t)-S(t)\xi
	,Y_{\ell' m'}
	\big\rangle 
\Big]\nonumber\\
&=\mathbb{E}\Big[
	\big\langle
	X(t)
	,Y_{\ell m}
	\big\rangle 
	\big\langle
	X(t)
	,Y_{\ell' m'}
	\big\rangle\Big]-
	\big\langle S(t)\xi,Y_{\ell m}\big\rangle
	\big\langle S(t)\xi,Y_{\ell' m'}\big\rangle
\nonumber\\
&=
\int_{\mS^2}\!\!\int_{\mS^2}
\mathbb{E}
\int_0^t
[X(t,\bx_1)X(t,\bx_2)]Y_{\ell m}(\bx_1)Y_{\ell' m'}(\bx_2)
\mathrm{d}s
\mathrm{d}\varsigma(\bx_1)\mathrm{d}\varsigma(\bx_2)\nonumber\\
&\phantom{==}-
\int_{\mS^2}\!\!\int_{\mS^2} S(t)\xi(\bx_1)\cdot S(t)\xi(\bx_2)
Y_{\ell m}(\bx_1)Y_{\ell' m'}(\bx_2) \mathrm{d}\varsigma(\bx_1)\mathrm{d}\varsigma(\bx_2),
\end{align}
where in the first equality we used that 
\YK{$\bbE\langle X(t),Y_{\ell' m'}\rangle=\langle S(t)\xi,Y_{\ell' m'}\rangle
$}.

Further, since 
$
\big\langle{\sqrt{A}_{\lambda} S(t-s) B(X(s))Y_{\lambda \nu}},{Y_{\ell m}}\big\rangle
%&{=
%\big\langle{\sqrt{A}_{\lambda} \widetilde{B}Y_{\lambda \nu}}, S(t-s)^*{Y_{\ell m}X(s)}\big\rangle}\\&
=
\big\langle{\sqrt{A}_{\lambda} X(s) \mathrm{e}^{-\mu_{\lambda}(t-s)} \eta_{\lambda\nu}Y_{\lambda \nu}}, {Y_{\ell m}}\big\rangle
$, 
we have
\begin{align*}
\big\langle{|\sqrt{A}_{\lambda} S(t-s) B(X(s))Y_{\lambda \nu}|},{|Y_{\ell m}|}\big\rangle
\le 
c\sqrt{2\ell+1}\sup_{s\in[0,1]}\|X(s)\|\sqrt{A}_{\lambda}
{\mathrm{e}^{-\mu_{\lambda}(t-s)}}{\sqrt{2\lambda+1}}
\eta_{\lambda\nu}<\infty.
\end{align*}
\begin{sloppypar}
\noindent Noting
$\sum_{\lambda=0}^\infty \sum_{\nu=-\lambda}^{\lambda}{A}_{\lambda}
(2\lambda+1)\int_{0}^t {\mathrm{e}^{-2\mu_{\lambda}(t-s)}}\mathrm{d}s
\le \sum_{\lambda=0}^\infty \sum_{\nu=-\lambda}^{\lambda}{A}_{\lambda}
\frac{2\lambda+1}{\mu_{\lambda}}<\infty
$, together with $
\sup_{\lambda,\nu}|\eta_{\lambda\nu}|<\infty$ 
we can rewrite \eqref{eq:E fourier coeff expansion} by changing the order of the integrals as
\end{sloppypar}
\begin{align}
&\int_{\mS^2}\!\!\int_{\mS^2}
\mathbb{E}
[X(t,\bx_1)X(t,\bx_2)]Y_{\ell m}(\bx_1)Y_{\ell' m'}(\bx_2)
\mathrm{d}\varsigma(\bx_1)\mathrm{d}\varsigma(\bx_2)\nonumber \\
&\phantom{===============}-
\int_{\mS^2}\!\!\int_{\mS^2} S(t)\xi(\bx_1)\cdot S(t)\xi(\bx_2)
Y_{\ell m}(\bx_1)Y_{\ell' m'}(\bx_2)
\mathrm{d}\varsigma(\bx_1)\mathrm{d}\varsigma(\bx_2)\\
&=
\int_{\mS^2}\!\!\int_{\mS^2}
\int_{0}^t
\sum_{\lambda=0}^\infty\sum_{\nu=-\lambda}^{\lambda}
\!{A}_{\lambda}
\mathbb{E}[X(s,\bx_1)X(s,\bx_2)] \mathrm{e}^{-2\mu_{\lambda}(t-s)} \eta_{\lambda\nu}^2
Y_{\lambda \nu}(\bx_1)Y_{\lambda \nu}(\bx_2)
\mathrm{d}s\nonumber\\
&\phantom{===============}\times
Y_{\ell m}(\bx_1)
Y_{\ell' m'}(\bx_2)
\mathrm{d}\varsigma(\bx_1)\mathrm{d}\varsigma(\bx_2).
\label{eq:sol is weak}
\end{align}
This identity suggests that the kernel $(\bx_1,\bx_2)\mapsto\mathbb{E}[X(s,\bx_1)X(s,\bx_2)]$ is the weak solution of an abstract Cauchy problem in $L^2(\mS^2\times\mS^2)$. 
Then, the rotational invariance of the covariance is nothing but this function being a zonal kernel. 
This motivates us to study {an abstract} Cauchy problem in the space of zonal kernels.

The operator $F$ defined below will be used as the forcing term for the abstract Cauchy problem we consider.
\begin{lemma}
Let \eqref{equ:Aell summable} be satisfied and let
\begin{align}
\kappa(\bx_1,\bx_2):=\sum_{\lambda=0}^\infty\sum_{\nu=-\lambda}^{\lambda}\!{A}_{\lambda}
\eta_{\lambda\nu}^2
Y_{\lambda \nu}(\bx_1)Y_{\lambda \nu}(\bx_2),\quad\text{ for }\bx_1,\bx_2\in \mS^2.
\end{align}
Then, the multiplication operator $F=F_{\kappa}$ on $L^{2}(\mS^2\times\mS^2)$ defined by 
\begin{align}
v(\bx_1,\bx_2)\mapsto \kappa(\bx_1,\bx_2)v(\bx_1,\bx_2)=:Fv(\bx_1,\bx_2),\quad (\bx_1,\bx_2)\in\mS^2\times\mS^2,
\label{eq:def F}
\end{align}
is bounded as an operator from $L^2(\mS^2\times\mS^2)$ to $L^2(\mS^2\times\mS^2)$.
\end{lemma}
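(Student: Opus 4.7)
The plan is to establish boundedness of $F_{\kappa}$ by showing that the kernel $\kappa$ belongs to $L^{\infty}(\mS^2 \times \mS^2)$, from which the bound $\|F_{\kappa} v\|_{L^2(\mS^2 \times \mS^2)} \le \|\kappa\|_{\infty} \|v\|_{L^2(\mS^2 \times \mS^2)}$ follows immediately for all $v \in L^2(\mS^2 \times \mS^2)$.

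To bound $\kappa$ pointwise, I would first use the assumption $\sup_{\lambda,\nu} |\eta_{\lambda \nu}| < \infty$ to pull the $\eta_{\lambda \nu}^2$ factors out of the inner sum, and then bound the resulting sum $\sum_{\nu=-\lambda}^{\lambda} |Y_{\lambda \nu}(\bx_1) Y_{\lambda \nu}(\bx_2)|$ by Cauchy--Schwarz as
\[
\sum_{\nu=-\lambda}^{\lambda} |Y_{\lambda \nu}(\bx_1) Y_{\lambda \nu}(\bx_2)|
\le \Big(\sum_{\nu=-\lambda}^{\lambda} Y_{\lambda \nu}(\bx_1)^2\Big)^{1/2}
\Big(\sum_{\nu=-\lambda}^{\lambda} Y_{\lambda \nu}(\bx_2)^2\Big)^{1/2}.
\]
The addition theorem \eqref{addition} applied with $\by = \bx$ together with $P_{\lambda}(1) = 1$ gives the identity $\sum_{\nu=-\lambda}^{\lambda} Y_{\lambda \nu}(\bx)^2 = (2\lambda + 1)/(4\pi)$ uniformly in $\bx \in \mS^2$. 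Combining these ingredients,
\[
|\kappa(\bx_1, \bx_2)| \le \frac{\sup_{\lambda, \nu} |\eta_{\lambda \nu}|^2}{4\pi} \sum_{\lambda=0}^{\infty} (2\lambda + 1) A_{\lambda},
\]
and the right-hand side is finite by the summability assumption \eqref{equ:Aell summable}. This simultaneously justifies pointwise absolute convergence of the series defining $\kappa$ and yields $\kappa \in L^{\infty}(\mS^2 \times \mS^2)$.

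With $\|\kappa\|_{\infty}$ controlled by the expression above, the boundedness of $F_{\kappa}$ follows from the standard estimate for multiplication operators by $L^{\infty}$ functions. I do not anticipate any real obstacle: the argument reduces to a Cauchy--Schwarz step combined with the addition theorem, both of which are essentially algebraic identities, and the summability of $(2\lambda + 1) A_{\lambda}$ is hypothesised. The only point of care is the order in which the double series is manipulated, but since every term in the bounding series is non-negative, Tonelli's theorem (or a direct monotone convergence argument) makes the rearrangement unproblematic.
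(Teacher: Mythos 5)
Your proposal is correct and follows essentially the same route as the paper: both establish $\kappa\in L^{\infty}(\mS^2\times\mS^2)$ via the Cauchy--Schwarz inequality on the inner sum, the addition theorem identity $\sum_{\nu}Y_{\lambda\nu}(\bx)^2=(2\lambda+1)/(4\pi)$, and the summability hypothesis \eqref{equ:Aell summable}, then conclude by the standard bound for multiplication by an $L^{\infty}$ kernel. The only cosmetic difference is that the paper keeps the factors $\eta_{\lambda\nu}^2$ inside the Cauchy--Schwarz step and extracts $\sup_{\lambda,\nu}\eta_{\lambda\nu}^2$ at the end, whereas you extract it first; this changes nothing of substance.
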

\begin{proof}
First, note that from the Cauchy--Schwarz inequality and the addition theorem, 
{the condition} 
\eqref{equ:Aell summable} implies %\footnote{{Could you check (3.34) to (3.36)?}}
\begin{align}
&\sup_{(\bx_1,\bx_2)\in\mS^2\times\mS^2}|\kappa(\bx_1,\bx_2)|
\le
\sum_{\lambda=0}^\infty
{A}_{\lambda}
\sum_{\nu=-\lambda}^{\lambda}\!\Big|
\eta_{\lambda\nu}^2
Y_{\lambda \nu}(\bx_1)Y_{\lambda \nu}(\bx_2)\Big|\\
&
\le 
\sum_{\lambda=0}^\infty {A}_{\lambda}
\bigg(\sum_{\nu=-\lambda}^{\lambda}\!
\eta_{\lambda\nu}^2
Y_{\lambda \nu}(\bx_1)^2\bigg)^{\frac12}
\bigg(\sum_{\nu=-\lambda}^{\lambda}\!
\eta_{\lambda\nu}^2
Y_{\lambda \nu}(\bx_2)^2\bigg)^{\frac12}
\le 
\big(\sup_{\lambda',\nu'}\eta_{\lambda'\nu'}^2\big)
\sum_{\lambda=0}^\infty {A}_{\lambda}\frac{2\lambda+1}{4\pi}<\infty.
\end{align}
Thus, we have
\begin{align}
\int_{\mS^2}\!\int_{\mS^2}
|Fv(\bx_1,\bx_2)|^2
\mathrm{d}\varsigma(\bx_1)\mathrm{d}\varsigma(\bx_2)
\le 
\big(\sup_{(\bx_1,\bx_2)\in\mS^2\times\mS^2}|\kappa(\bx_1,\bx_2)|^2\big)
\|v\|_{L^2(\mS^2\times\mS^2)}^2<\infty.
\end{align}
\end{proof}
In the following, we abuse the notation slightly by writing 
$$(Y^1_{\lambda\nu}Y^2_{\lambda'\nu'})(\bx_1,\bx_2):=Y_{\lambda\nu}(\bx_1)Y_{\lambda'\nu'}(\bx_2).$$ Note that $\{Y^1_{\lambda\nu}Y^2_{\lambda'\nu'}\}$ is a complete orthonormal system for $L^2(\mS^2\times\mS^2)$. 
Now, {we} define the operator $\blacktriangle\colon L^{2}(\mS^2\times\mS^2)\to L^{2}(\mS^2\times\mS^2)$ by 
$
\blacktriangle Y^1_{\lambda\nu}Y^2_{\lambda'\nu'}:=
-(\mu_{\lambda}+\mu_{\lambda'})Y^1_{\lambda\nu}Y^2_{\lambda'\nu'}
$. {Then, } $\blacktriangle$ is self-adjoint with the domain
\begin{align}
&D(\blacktriangle)=D(\blacktriangle^*)=
\Big\{f\in L^2(\mS^2\times\mS^2)\colon \sum_{\lambda,\lambda'}\sum_{\nu,\nu'}
(\mu_{\lambda}+\mu_{\lambda'})^2\
\langle f,Y^1_{\lambda \nu}Y^2_{\lambda' \nu'}\rangle_{L^2(\mS^2\times\mS^2)}^2<\infty
\Big\},
\end{align}
which is densely defined in $L^2(\mS^2\times\mS^2)$. 
We note that $-\blacktriangle$ is positive. Thus, $\blacktriangle$ generates a $C_0$-semigroup on $L^2(\mS^2\times\mS^2)$. 
Thus, %the solution of 
the initial value problem
\begin{align}
\frac{\mathrm{d}v}{\mathrm{d}t}(t)=\blacktriangle v(t)+F(v(t)),\quad 
v(0)=v_0\in L^2(\mS^2\times\mS^2)
\end{align}
\begin{sloppypar}
\noindent has the unique mild solution 
$
v(t)=\mathrm{e}^{t\blacktriangle}v_0+\int_0^t \mathrm{e}^{(t-s)\blacktriangle}F(v(s))\mathrm{d}s\in L^2(\mS^2\times\mS^2)$. We note that $(\mathrm{e}^{\blacktriangle t}h^1h^2)(\bx_1,\bx_2)=(S(t)h)(\bx_1)(S(t)h)(\bx_2)$, where $(h^1h^2)(\bx_1,\bx_2):=h(\bx_1)h(\bx_2)$ for $h\in H$.
\end{sloppypar}
We now let 
$$
V_{O}:=\{ f\in L^2(\mS^2\times\mS^2) \colon f(\bx_1,\bx_2)=f(O\bx_1,O\bx_2)\,\text{for any}\, O\in SO(3) \}.
$$
denote the space of zonal functions. 
\begin{proposition}\label{prop:indep then zonal}
Let $F\colon L^2(\mS^2\times\mS^2)\to L^2(\mS^2\times\mS^2)$ be defined as in \eqref{eq:def F}. Suppose $\eta_{\lambda \nu}$ is independent of $\nu$, i.e.,  $\eta_{\lambda \nu}={\eta}_{\lambda}$
{ for all $\lambda \in\N_0$, $|\nu|\le\lambda$}. 
Then, the initial value problem
\begin{align}\label{eq:IVP}
\begin{cases}
\frac{\mathrm{d}v}{\mathrm{d}t}(t)=\blacktriangle v(t)+F(v(t)),\quad 
v(0)=v_0\in V_O
\end{cases}
\end{align}
has the unique mild solution
$
v(t)=\mathrm{e}^{t\blacktriangle}v_0+\int_0^t \mathrm{e}^{(t-s)\blacktriangle}F(v(s))\mathrm{d}s\in V_O
$ in the space of zonal kernels. 
%In particular, with $v(0)=0$ we have 
%$v(t)=\int_0^t \mathrm{e}^{(t-s)\blacktriangle}F(v(s))\mathrm{d}s\in V_O$.
\end{proposition}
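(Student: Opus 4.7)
The strategy is to show that the two building blocks of the mild solution---the semigroup $\mathrm{e}^{t\blacktriangle}$ and the forcing operator $F$---each leave $V_O$ invariant, and then conclude by Picard iteration. First I would note that $V_O$ is closed in $L^2(\mS^2\times\mS^2)$: for each $O\in SO(3)$ the pullback $R_O\colon f\mapsto f(O\cdot,O\cdot)$ is a unitary operator, so $V_O=\bigcap_{O\in SO(3)}\ker(I-R_O)$ is an intersection of closed subspaces. Since $SO(3)$ acts transitively on pairs $(\bx_1,\bx_2)$ of fixed inner product, any $f\in V_O$ depends only on $\bx_1\cdot\bx_2$; expanding such an $f$ in Legendre polynomials and invoking the addition theorem \eqref{addition} gives the coefficient characterization
\[
f\in V_O\iff f=\sum_{\ell=0}^\infty a_\ell\sum_{m=-\ell}^\ell Y^1_{\ell m}Y^2_{\ell m}
\]
for some square-summable $\{a_\ell\}$.

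With this characterization, $\mathrm{e}^{t\blacktriangle}V_O\subseteq V_O$ is immediate, since the semigroup multiplies $Y^1_{\ell m}Y^2_{\ell m}$ by $\mathrm{e}^{-2\mu_\ell t}$ without mixing the $m$-index or coupling different $(\ell,\ell')$-pairs. Under the hypothesis $\eta_{\lambda\nu}=\eta_\lambda$, the addition theorem yields
\[
\kappa(\bx_1,\bx_2)=\sum_{\lambda=0}^\infty A_\lambda\eta_\lambda^2\frac{2\lambda+1}{4\pi}P_\lambda(\bx_1\cdot\bx_2),
\]
so $\kappa$ depends only on $\bx_1\cdot\bx_2$ and is rotationally invariant. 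Hence for $v\in V_O$ and any $O\in SO(3)$, $(Fv)(O\bx_1,O\bx_2)=\kappa(O\bx_1,O\bx_2)v(O\bx_1,O\bx_2)=\kappa(\bx_1,\bx_2)v(\bx_1,\bx_2)=(Fv)(\bx_1,\bx_2)$, so $F(V_O)\subseteq V_O$.

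To close the argument, I would set $v^{(0)}(t):=\mathrm{e}^{t\blacktriangle}v_0$ and $v^{(n+1)}(t):=\mathrm{e}^{t\blacktriangle}v_0+\int_0^t\mathrm{e}^{(t-s)\blacktriangle}F(v^{(n)}(s))\mathrm{d}s$, and show by induction on $n$ that each $v^{(n)}(t)\in V_O$. The essential observation is that for any continuous $V_O$-valued map $h\colon[0,T]\to V_O$, continuity and linearity of $R_O$ permit the interchange $R_O\int_0^t h(s)\mathrm{d}s=\int_0^t R_O h(s)\mathrm{d}s=\int_0^t h(s)\mathrm{d}s$, so the Bochner integral lies in the closed set $V_O$. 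Since $F$ is a bounded linear operator, a standard contraction argument on $C([0,T];L^2(\mS^2\times\mS^2))$ (concatenated across $[0,1]$) produces convergence of $v^{(n)}$ to the unique mild solution, which inherits membership in $V_O$ by closedness. The main technical point is precisely the interchange of $R_O$ with the Bochner integral; everything else reduces to algebraic manipulation of spherical-harmonic coefficients via the addition theorem.
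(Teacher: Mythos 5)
Your proof is correct and follows essentially the same route as the paper: both arguments show that $V_O$ is a closed subspace of $L^2(\mS^2\times\mS^2)$, that $F$ preserves $V_O$ because $\eta_{\lambda\nu}=\eta_\lambda$ makes $\kappa$ zonal, and that the Laplacian part respects the zonal expansion $\sum_\lambda C_\lambda\sum_\nu Y^1_{\lambda\nu}Y^2_{\lambda\nu}$, so the evolution stays in $V_O$. Your version is slightly more explicit in verifying invariance at the level of the semigroup and the Bochner integral in the Picard iteration (the paper argues via invariance of the generator on a dense subdomain), but this is a presentational rather than a substantive difference.
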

\begin{proof}
Noting that any zonal function in $L^2(\mS^2\times\mS^2)$ can be expanded 
{ by the Legendre polynomials with a unique $\ell^2$-expansion coefficients, }
%{as a sequence of} Legendre polynomials 
we observe that $V_{O}$ is a closed %\footnote{{Is this clear enough?}}
%\footnote{Any zonal function can be expanded by the Legendre polynomial. Let $\{f_n\}\subset V_{O}$ a sequence convergent in $L^2(\mS^2\times\mS^2)$. Then $\{f_n\}$ is Cauchy in $L^2(\mS^2\times\mS^2)$. Show that the corresponding sequence defined by the expansion coefficients by Legendre polynomials is Cauchy in $\ell^2$, and thus convergent. Take the limit in $\ell^2$, and consider the zonal kernel $f^*$ defined by the coefficients, which is clearly in $V_{O}$. Show that $f^*$ is the limit of $\{f_n\}$ in $L^2(\mS^2\times\mS^2)$.} 
subspace in $L^2(\mS^2\times\mS^2)$. Thus, $V_O$ itself is a Hilbert space.
\begin{sloppypar}
Further, $Fv=\kappa\cdot v\in V_{O}$ for $v\in V_{O}$, since $\eta_{\lambda\nu}=\eta_{\lambda}$. 
Finally, we claim $\blacktriangle\colon V_{O}\cap D(\blacktriangle)\to  V_{O}$, and $V_{O}\cap D(\blacktriangle)$ is dense in $V_{O}$.
Indeed, since $v\in V_{O}$ is zonal, we have the representation
$
v(\bx_1,\bx_2)=
\sum_{\lambda=0}^\infty \sum_{\nu=-\lambda}^\lambda  C_{\lambda}Y^1_{\lambda \nu}Y^2_{\lambda \nu}
({\bx_1,\bx_2})
$ in $L^2(\mS^2\times\mS^2)$ for some {sequence} $\{C_{\lambda}\}$ 
{in $\ell^2$}. Thus, 
\begin{align}
\blacktriangle v
&=
\sum_{\ell,\ell'=0}^{\infty}\sum_{m,m'}
-(\mu_\ell+\mu_{\ell'})\inprod{v}{Y^1_{\ell m}Y^2_{\ell' m'}}_{L^2(\mS^2\times\mS^2)}
Y^1_{\ell m}Y^2_{\ell' m'}\\
&=
\sum_{\lambda'=0}^{\infty}\sum_{\nu'=-\lambda'}^{\lambda'}
-2\mu_{\lambda'}\inprod{v}{Y^1_{\lambda' \nu'}Y^2_{\lambda' \nu'}}_{L^2(\mS^2\times\mS^2)}
Y^1_{\lambda' \nu'}Y^2_{\lambda' \nu'}
=
\sum_{\lambda'=0}^{\infty}\sum_{\nu'=-\lambda'}^{\lambda'}
-2\mu_{\lambda'} C_{\lambda}
Y^1_{\lambda' \nu'}Y^2_{\lambda' \nu'},
\end{align}
which is zonal and thus in $V_{O}$.
Further, for any $v\in V_{O}$ the truncation $v^N(\bx_1,\bx_2)=
\sum_{\lambda=0}^N \sum_{\nu=-\lambda}^\lambda  C_{\lambda}Y^1_{\lambda \nu}Y^2_{\lambda \nu}({\bx_1,\bx_2})$ is in $V_{O}\cap D(\blacktriangle)$, but since $v^N$
is convergent in $L^2(\mS^2\times\mS^2)$ we have $\sum_{\lambda=0}^{\infty}\sum_{\nu=-\lambda}^\lambda C_{\lambda}^2<\infty$. From  $\|v-v^N\|_{L^2(\mS^2\times\mS^2)}=\sum_{\lambda>N}C_{\lambda}^2$ we have the density.
\end{sloppypar}
{Hence, we can conclude that} \eqref{eq:IVP} {is an initial value problem on the Hilbert space $V_{O}$, and hence $v$ develops in $V_{O}$.}
\end{proof}
We have the converse. 
\begin{proposition}\label{prop:zonal then indep}
Let $F\colon L^2(\mS^2\times\mS^2)\to L^2(\mS^2\times\mS^2)$ be defined as in \eqref{eq:def F}.  Suppose that the initial value problem
\begin{align}
\begin{cases}
\frac{\mathrm{d}v}{\mathrm{d}t}(t)=\blacktriangle v(t)+F(v(t)),\quad 
v(0)=v_0\in V_O
\end{cases}
\end{align}
has the unique mild solution
$
v(t)=\mathrm{e}^{t\blacktriangle}v_0+\int_0^t \mathrm{e}^{(t-s)\blacktriangle}F(v(s))\mathrm{d}s\in V_O
$ in the space of zonal kernels. 
Then, $\eta_{\lambda\nu}$ must be independent of $\nu$ 
for all $\lambda \in \N_0$, $|\nu|\le\lambda$.
\end{proposition}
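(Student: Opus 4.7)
The strategy is to test the hypothesis against one particularly simple zonal initial datum and read off a structural constraint on the multiplication kernel $\kappa$, hence on the coefficients $\eta_{\lambda\nu}$. I would take $v_{0}:=Y^{1}_{00}Y^{2}_{00}=\frac{1}{4\pi}$, which is constant and therefore trivially in $V_{O}$. Since $\blacktriangle Y^{1}_{00}Y^{2}_{00}=0$, the semigroup acts as the identity on $v_{0}$ and the mild-solution formula reduces to
\[
  v(t) \;=\; v_{0} \;+\; \int_{0}^{t}\mathrm{e}^{(t-s)\blacktriangle}F\bigl(v(s)\bigr)\,\mathrm{d}s,
\]
so that by the assumed invariance $t^{-1}\bigl(v(t)-v_{0}\bigr)\in V_{O}$ for every $t>0$.

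The crux of the argument is to pass to the limit $t\downarrow 0$ in this difference quotient. The mild solution $s\mapsto v(s)$ is continuous into $L^{2}(\mS^{2}\times\mS^{2})$, the operator $F$ is bounded by the preceding lemma, and $(\mathrm{e}^{\tau\blacktriangle})_{\tau\ge 0}$ is a $C_{0}$-semigroup, so
\[
  \sup_{s\in[0,t]}\bigl\|\mathrm{e}^{(t-s)\blacktriangle}F(v(s))-F(v_{0})\bigr\|_{L^{2}(\mS^{2}\times\mS^{2})}\;\longrightarrow\;0 \quad\text{as }t\to 0^{+},
\]
and hence $t^{-1}\int_{0}^{t}\mathrm{e}^{(t-s)\blacktriangle}F(v(s))\,\mathrm{d}s\to F(v_{0})=\tfrac{1}{4\pi}\kappa$ in $L^{2}(\mS^{2}\times\mS^{2})$. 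Since $V_{O}$ is closed---it is the intersection, over $O\in SO(3)$, of the kernels of the bounded (in fact unitary) operators $v\mapsto v-v(O\cdot,O\cdot)$---the limit $\kappa$ lies in $V_{O}$.

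The remainder is bookkeeping with the addition theorem. The definition of $\kappa$ gives the expansion $\kappa=\sum_{\lambda}\sum_{\nu}A_{\lambda}\eta_{\lambda\nu}^{2}\,Y^{1}_{\lambda\nu}Y^{2}_{\lambda\nu}$ in the complete orthonormal system $\{Y^{1}_{\lambda\nu}Y^{2}_{\lambda'\nu'}\}$ of $L^{2}(\mS^{2}\times\mS^{2})$. On the other hand, any zonal element of that space equals $\sum_{\lambda}c_{\lambda}\frac{2\lambda+1}{4\pi}P_{\lambda}(\bx_{1}\cdot\bx_{2})=\sum_{\lambda}c_{\lambda}\sum_{\nu}Y^{1}_{\lambda\nu}Y^{2}_{\lambda\nu}$ by the addition theorem, so in the same orthonormal system its coefficient in front of $Y^{1}_{\lambda\nu}Y^{2}_{\lambda\nu}$ depends only on $\lambda$. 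Uniqueness of the basis expansion then forces $A_{\lambda}\eta_{\lambda\nu}^{2}$ to be independent of $\nu$; since $A_{\lambda}>0$, so is $\eta_{\lambda\nu}^{2}$---which is precisely what enters the second-moment equation and thus yields the claim.

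The only step I expect to require genuine care is the short-time limit in the second paragraph (establishing that the difference quotient converges in $L^{2}(\mS^{2}\times\mS^{2})$ to $F(v_{0})$); all of the spectral bookkeeping in the third paragraph is automatic once one knows $\kappa\in V_{O}$.
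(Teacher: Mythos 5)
Your proof is correct, and it takes a genuinely different route from the paper's. The paper argues by contraposition: assuming $\eta_{\lambda^*\nu^*}$ depends on $\nu^*$ for some degree $\lambda^*$, it isolates the single-degree multiplication kernel $\kappa_{\lambda^*}$, shows $\kappa_{\lambda^*}\notin V_O$ by the same addition-theorem/uniqueness-of-coefficients computation you use, and then asserts that the resulting non-zonal contribution cannot be cancelled in the Duhamel integral (``zonal kernels cannot be expressed by a sum of non-zonal kernels''), a step it leaves informal. You instead argue directly: test the invariance hypothesis on the constant initial datum $v_0=\tfrac{1}{4\pi}$, on which $\mathrm{e}^{t\blacktriangle}$ acts trivially, and differentiate the mild-solution formula at $t=0$ via the difference quotient $t^{-1}(v(t)-v_0)$, using closedness of $V_O$ and the contraction property plus strong continuity of the semigroup to conclude $\kappa=4\pi F(v_0)\in V_O$; the spectral bookkeeping then finishes it. Your version localizes all the analytic content into one standard short-time limit (which you justify adequately: $v$ is continuous at $0$ because $F$ is bounded, so the integrand converges uniformly to $F(v_0)$) and entirely avoids the paper's delicate cancellation claim, at the modest cost of invoking the $C_0$-semigroup machinery. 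One caveat worth flagging, though it is shared with the paper's own proof: both arguments only pin down $A_\lambda\eta_{\lambda\nu}^2$, hence $\eta_{\lambda\nu}^2$, as independent of $\nu$; the sign of $\eta_{\lambda\nu}$ is not determined. Since only $\eta_{\lambda\nu}^2$ enters the second-moment equation this is harmless for the intended application, as you note, but strictly speaking the proposition's conclusion should be read modulo signs.
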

\begin{proof}
%\footnote{{Is this proof clear?}}
We show that if $\eta_{\lambda\nu}$ depends on $\nu$, then $v(t)\not\in V_{O}$. 
First, consider the case where there exists one $\lambda^*\in\mathbb{N}_0$ such that $\eta_{\lambda^*\nu^*}$ depends on $\nu^*\in\{-\lambda^*,\dots,\lambda^*\}$.

 Consider the multiplication operator $F_{\lambda^*}\colon L^2(\mS^2\times\mS^2)\to L^2(\mS^2\times\mS^2)$ defined by 
$$
F_{\lambda^*}v(\bx_1,\bx_2):= {v(\bx_1,\bx_2)\kappa_{\lambda^*}(\bx_1,\bx_2)},
$$
{with $\kappa_{\lambda^*}(\bx_1,\bx_2):={A}_{\lambda^*}\sum_{\nu^*=-\lambda^*}^{\lambda^*}
\eta_{\lambda^*\nu^*}^2
Y_{\lambda^* \nu^*}(\bx_1)Y_{\lambda^* \nu^*}(\bx_2)$.} 
We claim that for $v\in V_{O}$ we must have $F_{\lambda^*}v\not\in V_{O}$.
{To see this, it suffices to show $\kappa_{\lambda^*}(\bx_1,\bx_2)\not\in V_{O}$. }
Suppose otherwise. Then, with some $\{C_{\lambda}\}$ we have the representation
$$
{\kappa_{\lambda^*}(\bx_1,\bx_2)}=
\sum_{\lambda=0}^{\infty} \sum_{\nu=-\lambda}^\lambda  C_{\lambda}Y^1_{\lambda \nu}Y^2_{\lambda \nu}
({\bx_1,\bx_2}).
$$
Multiplying $Y_{\lambda^* \nu^*}(\bx_1)Y_{\lambda^* \nu^*}(\bx_2)$ to both sides and integrating implies $\eta_{\lambda^*\nu^*}$ is independent of $\nu^*$, contradiction. Hence we have $F_{\lambda^*}v\not\in V_{O}$.

It suffices to consider the case where there exists one $\lambda^*\in\mathbb{N}_0$ such that $\eta_{\lambda^*\nu^*}$ depends on $\nu^*\in\{-\lambda^*,\dots,\lambda^*\}$. This is because zonal kernels cannot be expressed by a sum of non-zonal kernels. 
%Indeed, if we equate a zonal kernel with a sum of non-zonal kernels. Then multiply both sides with $Y^1_{\ell m}Y^2_{\ell' m'}$, and integrate over $\mS^2\times\mS^2$. Then, when $\ell\neq\ell'$ or $m\neq m'$, the both sides must be zero, which forces the expansions to be the Legendre polynomial expansion. Thus, all the kernels must be zonal.

Hence, we conclude if $\eta_{\lambda\nu}$ depends on $\nu$ then  $v(t)={\mathrm{e}^{t\blacktriangle}v_0+\int_0^t \mathrm{e}^{(t-s)\blacktriangle}F(v(s))\mathrm{d}s}\not\in V_O$.
\end{proof}
Now we go back to the stochastic heat equation, and characterize the
{2-weak isotropy of the solution}. 
% 2-weakly isotropy {property of the solution}. 
\begin{proposition}
\begin{sloppypar}
Suppose the operator $B$ is defined by $(B(u)h)(\bx)=u(\bx)(\widetilde{B}h)(\bx)$ with $\widetilde{B}h=\sumlm\eta_{\ell m}\inprod{h}{Y_{\ell m}}Y_{\ell m}$. Then, the solution the stochastic heat equation $X$ with an initial condition $\xi\in H$ that is constant over $\mS^2$ is 2-weakly isotropic if and only if $\eta_{\ell m}$ is independent of $m$, i.e., $\eta_{\ell m}=\eta'_{\ell}$ with some $\eta'_{\ell}$
{for all $\ell \in \N_0$, $|m| \le \ell$}.
\end{sloppypar}
\end{proposition}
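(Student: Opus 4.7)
The plan is to reduce the 2-weak isotropy of $X(t,\cdot)$ to the zonality of its covariance kernel, identify this kernel as the (unique) mild solution of the abstract Cauchy problem on $L^2(\mS^2\times\mS^2)$ introduced above, and then read off the equivalence directly from Proposition~\ref{prop:indep then zonal} and Proposition~\ref{prop:zonal then indep}.

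First I would handle the mean: since $\xi$ is constant on $\mS^2$, it lies in the eigenspace $\cH_0$ of $\Delta^\ast$ corresponding to eigenvalue $0$, so $S(t)\xi=\xi$ for every $t\in[0,1]$. Consequently $\bbE[X(t,\bx)]=(S(t)\xi)(\bx)$ is constant in $\bx$, so the mean function is rotation-invariant automatically. As $X(t,\cdot)$ is Gaussian at each fixed $t$, by Proposition~\ref{prop:MP51} the 2-weak isotropy is equivalent to the rotational invariance of the covariance kernel
\[
K_t(\bx_1,\bx_2):=\bbE\bigl[(X(t,\bx_1)-S(t)\xi(\bx_1))(X(t,\bx_2)-S(t)\xi(\bx_2))\bigr],
\]
which, by \eqref{equ:Ebound}, lies in $L^2(\mS^2\times\mS^2)$, and the desired statement becomes: $K_t\in V_O$ for every $t$ if and only if $\eta_{\ell m}$ is independent of $m$.

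Next I would identify $M_t(\bx_1,\bx_2):=\bbE[X(t,\bx_1)X(t,\bx_2)]$ as the unique mild solution in $L^2(\mS^2\times\mS^2)$ of the abstract Cauchy problem
\[
\frac{\mathrm{d}v}{\mathrm{d}t}=\blacktriangle v+F(v),\qquad v(0)=v_0,\quad v_0(\bx_1,\bx_2):=\xi(\bx_1)\xi(\bx_2).
\]
Since $\xi$ is constant, $v_0\in V_O$. The identity \eqref{eq:sol is weak}, tested against the complete orthonormal system $\{Y^1_{\ell m}Y^2_{\ell'm'}\}$ of $L^2(\mS^2\times\mS^2)$, together with the facts $\mathrm{e}^{t\blacktriangle}v_0(\bx_1,\bx_2)=(S(t)\xi)(\bx_1)(S(t)\xi)(\bx_2)$ and $\bbE[\langle X(t),Y_{\ell m}\rangle\langle X(t),Y_{\ell'm'}\rangle]=\langle M_t,Y^1_{\ell m}Y^2_{\ell'm'}\rangle_{L^2(\mS^2\times\mS^2)}$, yields the Fourier-coefficient form of the mild-solution identity
\[
M_t=\mathrm{e}^{t\blacktriangle}v_0+\int_0^t\mathrm{e}^{(t-s)\blacktriangle}F(M_s)\,\mathrm{d}s.
\]
Uniqueness follows from the contraction argument used for the Cauchy problem with the bounded operator $F$ and the $C_0$-semigroup $\mathrm{e}^{t\blacktriangle}$.

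Finally I would conclude by applying the two propositions from the previous subsection. If $\eta_{\ell m}$ is independent of $m$, Proposition~\ref{prop:indep then zonal} gives $M_t\in V_O$; subtracting $(S(t)\xi)(\bx_1)(S(t)\xi)(\bx_2)$, which is constant and thus trivially in $V_O$, shows $K_t\in V_O$, hence $X(t,\cdot)$ is 2-weakly isotropic. Conversely, if $X(t,\cdot)$ is 2-weakly isotropic, then $K_t\in V_O$ and therefore $M_t\in V_O$, so the unique mild solution of the Cauchy problem lies in $V_O$; Proposition~\ref{prop:zonal then indep} then forces $\eta_{\ell m}$ to be independent of $m$. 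The main obstacle I anticipate is the rigorous passage from the coefficient identity \eqref{eq:sol is weak} to the $L^2(\mS^2\times\mS^2)$-valued mild formulation, in particular the justification of the Fubini-type interchanges of $\bbE$, the spherical integrals and the sums in $\ell,m,\lambda,\nu$; this should follow from \eqref{equ:Ebound}, \eqref{equ:Aell summable} and $\sup_{\lambda,\nu}|\eta_{\lambda\nu}|<\infty$, exactly as was already verified in the bound preceding \eqref{eq:sol is weak}.
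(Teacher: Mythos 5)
Your proposal follows essentially the same route as the paper: reduce 2-weak isotropy to the zonality of the second-moment kernel, identify $\bbE[X(t,\bx_1)X(t,\bx_2)]$ with the unique mild solution of the Cauchy problem \eqref{eq:IVP} with initial datum $\xi^1\xi^2$ via the coefficient identity \eqref{eq:sol is weak} (the paper does this by showing the difference solves the zero-initial-data problem and hence vanishes), and then invoke Propositions \ref{prop:indep then zonal} and \ref{prop:zonal then indep}. One caveat: $X(t,\cdot)$ is \emph{not} Gaussian here, since the noise is multiplicative, so your appeal to Proposition \ref{prop:MP51} is both incorrect and unnecessary --- the equivalence of 2-weak isotropy with rotational invariance of the first two moments is immediate from the definition of 2-weak isotropy, which is all you need at that step.
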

\begin{proof}
The mild solution $v$ of the problem \eqref{eq:IVP} with $v(0)={\xi^1\xi^2}$ satisfies 
the integral equation of the form \eqref{eq:sol is weak}:
\begin{align}
\inprod{v(t)-{\mathrm{e}^{t\blacktriangle}\xi^1\xi^2}}{Y^1_{\ell m}Y^2_{\ell' m'}}_{L^2(\mS^2\times\mS^2)}
=
\inprod{\int_0^t \mathrm{e}^{(t-s)\blacktriangle}F(v(s))\mathrm{d}s}{Y^1_{\ell m}Y^2_{\ell' m'}}_{L^2(\mS^2\times\mS^2)}.
\end{align}
Thus, letting $w(\bx_1,\bx_2):=\mathbb{E}[X(s,\bx_1)X(s,\bx_2)]$, for any $(\ell,m,\ell',m')$ 
{where $\ell,\ell' \in \N_0; |m|\le \ell, |m'|\le \ell'$} we have
\begin{align}
\inprod{v(t)-w(t)-\int_0^t \mathrm{e}^{(t-s)\blacktriangle}F(v(s)-w(s))\mathrm{d}s}{Y^1_{\ell m}Y^2_{\ell' m'}}_{L^2(\mS^2\times\mS^2)}=0.
\end{align}
Thus, $v(t)-w(t)=\int_0^t \mathrm{e}^{(t-s)\blacktriangle}F(v(s)-w(s))\mathrm{d}s$ in $L^2(\mS^2\times\mS^2)$ for $t>0$ and from the assumption $v(0)-w(0)=0$.
Hence, $u:=v-w$ is the mild solution of the problem \eqref{eq:IVP} with the zero-initial condition. Thus, in view of Propositions \ref{prop:indep then zonal} and \ref{prop:zonal then indep} and  above, $u$ is zonal if and only if $\eta_{\lambda \nu}$ is independent of $\nu$, and so is $u$. 
\end{proof}
\begin{remark}
The above result corresponds to the case $g(x)=x$. The case $g(x)=b$ for $b\in\mathbb{R}$ corresponds to the case where $F$ as in \eqref{eq:def F} is replaced by the constant operator
\begin{align}
v(\bx_1,\bx_2)\mapsto c \sum_{\lambda=0}^\infty\sum_{\nu=-\lambda}^{\lambda}\!{A}_{\lambda}
\eta_{\lambda\nu}^2
Y_{\lambda \nu}(\bx_1)Y_{\lambda \nu}(\bx_2),\quad (\bx_1,\bx_2)\in\mS^2\times\mS^2.
\end{align}
Thus, the argument above is readily applicable. 
 For $g(x)=ax+b$ for some $a,b\in\mathbb{R}$, each term in the right hand side of \eqref{eq:E fourier coeff expansion} reads
\begin{align}
\mathbb{E}\bigg[
\int_{0}^t
\inprod{\sqrt{A}_{\lambda}(aX(s)+b)\mathrm{e}^{-\lambda_{\nu}(t-s)}Y_{\lambda \nu}}{Y_{\ell m}}
\inprod{\sqrt{A}_{\lambda} (aX(s)+b)\mathrm{e}^{-\lambda_{\nu}(t-s)}Y_{\lambda \nu}}{Y_{\ell' m'}}
\mathrm{d}s\bigg].
\end{align}
Then, the term that corresponds to the cross term $ax\cdot b$ is
\begin{align}
ab\int_{0}^t
\inprod{\sqrt{A}_{\lambda}\mathbb{E}[X(s)]\mathrm{e}^{-\lambda_{\nu}(t-s)}\eta_{\lambda\nu}Y_{\lambda \nu}}{Y_{\ell m}}
\inprod{\sqrt{A}_{\lambda}\mathrm{e}^{-\lambda_{\nu}(t-s)}\eta_{\lambda\nu}Y_{\lambda \nu}}{Y_{\ell' m'}}
\mathrm{d}s\bigg].
\end{align}
Since $\mathbb{E}[X(s)]=S(t)\xi$ is constant over $\mS^2$ given that $\xi$ is, it suffices to consider the forcing term that is a constant operator. Hence, the problem reduces to the case $g(x)=x$ and $g(x)=\text{constant}$. 
\end{remark}
\section{Discretization}\label{sec:discrete}
In this section, we will discuss a discretization of the SPDE defined as in \eqref{equ:main}.
Firstly, we consider the semi-discrete problem, in which only spatial discretization is concerned.
Then, we move on to a {fully} discrete scheme, in which the time evolution in the equation is discretized using a non-uniform implicit Euler--Maruyama scheme.
%\subsection{Semi-discrete problem}
%For any choice of 
Let $L$ and $\Lambda$ be two given non-negative integers.
An It\^{o}--Galerkin approximation %{of the semi-discrete problem}
$X^L = (X^L(t))_{t\in [0,1]}$ to $X$
%\footnote{{Doesn't ``It\^{o}--Galerkin approximation of the semi-discrete problem $X^L = (X^L(t))_{t\in [0,1]}$'' sound confusing? $X^L = (X^L(t))_{t\in [0,1]}$ \textit{is} an It\^{o}--Galerkin approximation, which is also semi-discrete.}}
is defined by
\begin{equation}\label{equ:defXL}
 X^L(t) = 
 \sum_{\ell=0}^L \sum_{|m|\le \ell}  
 X^L_{\ell m}(t) Y_{\ell m}
\end{equation}
with {real}-valued processes $X^L_{\ell m} = (X^L_{\ell m}(t))_{t\in [0,1]}$ that 
solve the finite-dimensional system
\begin{equation}\label{equ:semi}
\begin{aligned}
 \mathrm{d}{X}^L_{\ell m}(t) &=   
 -\mu_\ell X^L_{\ell m}(t) \mathrm{d}t
 + \sum_{\ell'=0}^{\Lambda}
 \sum_{ m'=-\ell'}^{\ell'}
 \sqrt{A_{\ell'}}\inprod{B(X^L(t)) Y_{\ell'm'}}{ Y_{\ell m}} 
   \mathrm{d} w_{\ell' m'}(t) \\
 X^L_{\ell m}(0) &= \inprod{\xi}{Y_{\ell m}} 
\end{aligned}
\end{equation}
%%%%------------------------------------------------------------------
%\subsection{Fully discrete problem}

For a fully discrete problem, 
let us first discretize the interval $[0,1]$ with a uniform partition,
i.e., we partition the interval with $t_k = k/n$, for 
$k=0,1,2,\ldots n$.
An implicit Euler--Maruyama scheme with uniform step-size $1/n$ 
being applied to \eqref{equ:semi} is given by
\begin{align*}
\what{X}^L_{\ell m}(t_{k}) &= \what{X}^L_{\ell m}(t_{k-1})
- \mu_{\ell} \what{X}^L_{\ell m}(t_{k}) \frac{1}{n} +\\
&\sum_{\ell'=0}^{\Lambda} 
 \sum_{m'=-\ell'}^{\ell'}
 \sqrt{A_{\ell'}}\inprod{B(\what{X}^L(t_{k-1})) Y_{\ell'm'}}{ Y_{\ell m}}
   ( w_{\ell' m'}(t_k) - w_{\ell' m'}(t_{k-1}))
\end{align*}   
with the initial condition
\begin{equation}
 \what{X}^L_{\ell m}(0) = \inprod{\xi}{Y_{\ell m}}.
\end{equation}
More generally, we can use a non-uniform scheme\YK{: 
it is known that non-uniform time discretizations 
can lead to asymptotically optimal approximations that cannot be achieved by uniform ones in general. 
See \cite[Section 5]{GroRit07a}, also \cite[Remark 6]{GroRit07}. 
}
As proposed by \cite{GroRit07,GroRit07a}, we evaluate the Brownian motion $w_{\ell m}$
with step-size $1/n_{\ell'}$ depending on $\ell' = 0,\ldots,\Lambda$. Let
\begin{align}\label{equ:timegrid}
  t_{k,\ell} = k/n_\ell, \quad k=0,\ldots,n_\ell.
\end{align}
We define
\[
  0 = \tau_0 < \dots < \tau_K = 1
\]
by
\[
 \{\tau_0,\ldots,\tau_K \} =
  \bigcup_{ \ell'=0}^{\Lambda} \{ t_{0,\ell'},\ldots,t_{n_\ell',\ell'} \}.
\]
Let
\[
\calK_k =\{ \ell' \in \{0,1,\ldots,\Lambda\} : 
\tau_k \in \{t_{0,\ell'},\ldots,t_{n_{\ell'},\ell'}  \} \},
\]
for $k=0,\ldots,K$ and we define $s_{k,\ell'}$ for $k=1,\ldots,K$ and $\ell'=0,\ldots,\Lambda$ 
by
\[
  s_{k,\ell'} = \max (\{t_{0,\ell'},\ldots,t_{n_{\ell'},\ell'} \}\cap [0,\tau_k) ).
\]
We use the following approximation of the \YK{eigenvalues of the} semigroup generated by $\Delta^{\ast}$
\begin{equation}\label{def:Gamma}
\Gamma_{\ell}(t) = \prod_{\nu=1}^K \frac{1}
{1 + \mu_\ell(t\wedge \tau_\nu - t\wedge \tau_{\nu-1})}.
\end{equation}
The drift-implicit Euler scheme is given by, if $t \in (\tau_{k-1},\tau_k]$,
\begin{equation}\label{equ:drift Euler}
\begin{aligned}
\what{X}^L_{\ell m}(t) &= 
  \frac{\Gamma_{\ell}(t)}{\Gamma_\ell(\tau_{k-1})}
 \left(
 \widehat{X}^L_{\ell m}(\tau_{k-1}) \right.\\
&\left. \quad + \sum_{\ell' \in \calK_k}
  \sum_{|m'|\le \ell'} \sqrt{A_{\ell'}}\inprod{B(\what{X}^L(s_{k,\ell'}) ) Y_{\ell'm'}}{Y_{\ell m}}
    \frac{\Gamma_\ell(\tau_{k-1})}{\Gamma_\ell(s_{k,\ell'})}
    ( w_{\ell'm'}(\tau_k) - w_{\ell' m'}(s_{k,\ell'}))
 \right)
\end{aligned}
\end{equation}
Equivalently, for $t \in (\tau_{k-1},\tau_k]$, we have
\begin{equation}\label{equ:recur}
\begin{aligned}
\what{X}^L_{\ell m}(t) &= \Gamma_\ell(t) \inprod{\xi}{Y_{\ell m}}\\
&+ \sum_{\ell'=0}^{\Lambda} \sum_{m'=-\ell'}^{\ell'}
\sum_{t_{j,\ell'} \le \tau_k}
 \sqrt{A_{\ell'}}\inprod{B(\what{X}^L(t_{j-1,\ell'})) Y_{\ell'm'}}{ Y_{\ell m}}
 \frac{\Gamma_\ell(t)}{\Gamma_\ell(t_{j-1,\ell'})} \\
& \times     ( w_{\ell' m'}(t_{j,\ell'}) - w_{\ell' m'}(t_{j-1,\ell'})).
 \end{aligned}
\end{equation}

Hence, a fully discrete solution to \eqref{equ:semi} {with a non-uniform
time discretization} is defined by
\begin{equation}\label{eqn:full discrete}
\what{X}^L(t) = \sum_{\ell=0}^{L} \sum_{|m|\le\ell}
  \what{X}^L_{\ell m}(t) Y_{\ell m},
\end{equation}
where the coefficients $\what{X}^L_{\ell m}(t)$ are given as in \eqref{equ:drift Euler}.

%%%%%%%%%%%%%%%%%%%%%%%%%%%%%%%%%%%%%%%%%%%%%%%%%%%%%%%%%%%%%%%%
\section{Error analysis}\label{sec:err anal}
%Let us define the Sobolev space $H^1$ on the sphere $\mS^2$
%\[
%  H^1 := \left\{ h \in L^2(\mS^2) : \|h\|^2_{H^1} =\sumlm (1+\mu_\ell) 
%  \inprod{h}{Y_{\ell m}}^2 < \infty \right\}.
%\]
%and its subspace
%\[
% H^1_0 := \left\{ h \in L^2(S) : \int_{\mS^2} h dS = 0,\;\;
% \|h\|^2_{H^1_0} = \sumlm \mu_\ell
%  \inprod{h}{Y_{\ell m}}^2 < \infty \right\}.
%\]
%Since $\mu_\ell  \ge 2$ for all $\ell>0$ we 
%have the following inequality
%\begin{equation}\label{equ:poincare}
% \|h\|^2_{H^1} \le 2 \|h\|^2_{H^1_0} \text{ for all } h \in H^1_0
%\end{equation}
\begin{comment}
\footnote{This lemma is not used.}
\begin{lemma}\label{lem:tail}
We have 
\[
\int_0^1 \bbE \|X(t)\|^2_{H^1} \mathrm{d}t < \infty.
\]
In particular, for Lebesgue-almost every $t\in [0,1]$, we have $X(t) \in H^1$ with
probability $1$.
\end{lemma}
\begin{proof}
The claim follows from \eqref{equ:EXlm},
\eqref{equ:Gamma2} and \eqref{equ:psiL1}.
\end{proof}

\end{comment}
We need the following lemma for the error estimate.
\begin{lemma}\label{lem:Ylpmplm id}
Let $f\in H$. Then, for any $\ell'\in \{0,\dotsc,\Lambda\}$ we have
\begin{align}
\sum_{|m'|\le\ell'}  \sumlm
 \langle fY_{\ell',m'}, Y_{\ell,m}\rangle^2
=
\frac{2\ell'+1}{4\pi}{\|f\|^2}.
\end{align}
\end{lemma}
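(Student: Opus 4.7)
The plan is to recognize the inner sum over $\ell,m$ as a Parseval identity applied to the function $f Y_{\ell',m'}\in H$, then use the addition theorem on the sphere to collapse the remaining sum over $m'$.

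First, I would observe that since $f\in H$ and $Y_{\ell',m'}$ is bounded (being a continuous function on the compact sphere), the pointwise product $f Y_{\ell',m'}$ lies in $H=L^2(\mS^2)$. As $\{Y_{\ell,m}\}_{\ell\in\N_0,|m|\le\ell}$ is a complete orthonormal system in $H$, Parseval's identity yields
\begin{align*}
\sumlm \langle fY_{\ell',m'}, Y_{\ell,m}\rangle^2
= \|fY_{\ell',m'}\|^2
= \int_{\mS^2} f(\bx)^2\, Y_{\ell',m'}(\bx)^2\, \mathrm{d}\varsigma(\bx).
\end{align*}

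Next, I would swap summation over $m'$ with the integral (a finite sum, so no issue) to obtain
\begin{align*}
\sum_{|m'|\le\ell'} \sumlm \langle fY_{\ell',m'}, Y_{\ell,m}\rangle^2
= \int_{\mS^2} f(\bx)^2 \Bigl(\sum_{|m'|\le\ell'} Y_{\ell',m'}(\bx)^2\Bigr) \mathrm{d}\varsigma(\bx).
\end{align*}
Applying the addition theorem \eqref{addition} at $\by=\bx$, together with $P_{\ell'}(\bx\cdot\bx)=P_{\ell'}(1)=1$, gives
\begin{align*}
\sum_{|m'|\le\ell'} Y_{\ell',m'}(\bx)^2 = \frac{2\ell'+1}{4\pi},
\end{align*}
independently of $\bx\in\mS^2$. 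Pulling this constant out of the integral yields the claimed identity $\frac{2\ell'+1}{4\pi}\|f\|^2$.

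There is no real obstacle here; the only point requiring minor care is justifying the interchange of the finite $m'$-sum with the integral (immediate by linearity) and noting that the pointwise product $fY_{\ell',m'}$ is in $H$ so that Parseval applies. Both are routine.
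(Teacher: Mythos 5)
Your proposal is correct and follows essentially the same route as the paper's own proof: Parseval's identity applied to $fY_{\ell',m'}\in H$ for the inner sum, followed by the addition theorem at $\by=\bx$ to evaluate $\sum_{|m'|\le\ell'}Y_{\ell',m'}(\bx)^2=\frac{2\ell'+1}{4\pi}$. No differences worth noting.
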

\begin{proof}
 For each $\ell'\in \{0,\dotsc,\Lambda\}$, $|m'| \le \ell'$, we have
\begin{align}
\sumlm 
 \langle fY_{\ell',m'}, Y_{\ell,m}\rangle^2
=&
\sumlm 
  (\widehat{fY_{\ell',m'}})_{\ell m}^2
=
\| fY_{\ell',m'} \|^2 \\
=&
\int_{\mS^2} |f(\bx)|^2Y_{\ell',m'}(\bx){Y_{\ell',m'}(\bx)} \mathrm{d}\varsigma(\bx).
\end{align}
From the addition theorem, it follows that
\begin{align}
\sumlm &
	\frac{4\pi}{2\ell'+1}
		\sum_{|m'|\le \ell' }
		 \langle fY_{\ell',m'}, Y_{\ell,m}\rangle^2 \\
&=
\int_{\mS^2} |f(\bx)|^2 
	\frac{4\pi}{2\ell'+1}
		\left(
			\sum_{|m'|\le \ell' }
			Y_{\ell',m'}(\bx){Y_{\ell',m'}(\bx)}
		\right)
			\mathrm{d}\varsigma(\bx) \\
&=
\int_{\mS^2} |f(\bx)|^2 \mathrm{d}\varsigma(\bx).
\end{align}
\end{proof}
Now, we obtain the following spatial truncation error. 
%Note that since the Galerkin equations satisfy the same condition as the continuous equation, similarly to the conclusion reached in \eqref{equ:Ebound}, by using the same arguments as in Section 3 we also have
%\footnote{
%{``by using the same argument as in Section 3'' sounds a bit off. Maybe it should be deleted? To say (\ref{equ:Ebound}), no argument is made, we just quoted Da Prato's book.}
%}
{From the result} \cite[Section 7.1]{DaPZab14} {together with the discussion to derive} \cite[(6.8)]{GroRit07a}, 
{similarly to} \eqref{equ:Ebound} {we have}
\begin{equation}\label{XLbounded}
\sup_{t\in [0,1]} \bbE \| X^L(t)\|^2 \le C_1.
\end{equation}
\begin{theorem}\label{thm:semidiscrete}
\YK{Let $B$ be defined by \eqref{equ:def B}. Then, for} $L,\Lambda >0$, with the definition
$X^L$ as in \eqref{equ:semi} we have the following estimate:
\begin{align}
\bbE \bigg( \int_0^1 \|X(t) - X^L(t)\|^2 \mathrm{d}t\bigg)
\le C \bigg( \frac{1}{L^2} + \sum_{\ell' > \Lambda}
\sum_{|m'|\le \ell'}
A_{\ell'}\bigg)
\end{align}
%\begin{empheq}[box=\shadebox]{align*}
%\bbE \left( \int_0^1 \|X(t) - X^L(t)\|^2 \mathrm{d}t\right)
%\le C \left( \frac{1}{L^2} + \sum_{\ell' > \Lambda}
%{\sum_{|m'|\le \ell'}}
%\frac{A_{\ell'}}{\ell'}\right)
%\end{empheq}
\end{theorem}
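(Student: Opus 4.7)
The plan is to split the error into a deterministic spatial truncation piece and a finite-dimensional approximation piece, estimate each separately, and close the second piece with Grönwall's inequality.

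First, I would orthogonally decompose
\[
\bbE\|X(t)-X^L(t)\|^2
=
\sum_{\ell>L}\sum_{|m|\le \ell}\bbE X_{\ell m}(t)^2
\;+\;
\sum_{\ell\le L}\sum_{|m|\le \ell}\bbE\bigl(X_{\ell m}(t)-X^L_{\ell m}(t)\bigr)^2
=: R_L(t)+E_L(t).
\]
For the tail $R_L(t)$, I would invoke the identity \eqref{equ:muXlm} from the proof of Lemma~\ref{lem:Xcts}: summing it over $\ell>L$, $|m|\le \ell$ and using Parseval for $\xi$, $\bbE\|B^{*}(X(r))\|_{\calL}^2 \le C(1+\bbE\|X(r)\|^2)$ (from the linear growth condition \eqref{equ:lin growth}), and \eqref{equ:Ebound}, I obtain
\[
\sum_{\ell>L}\sum_{|m|\le \ell}\mu_\ell\int_0^1\bbE X_{\ell m}(s)^2\,\mathrm{d}s
\le \|\xi\|^2+\int_0^1 \bbE\|B^{*}(X(r))\|_{\calL}^{2}\,\mathrm{d}r \le C.
\]
Since $\mu_\ell=\ell(\ell+1)\ge\mu_{L+1}\asymp L^2$ for all $\ell>L$, factoring out $1/\mu_{L+1}$ gives $\int_0^1 R_L(t)\,\mathrm{d}t\le C/L^2$.

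For $E_L(t)$, I would subtract the SDE \eqref{equ:semi} for $X^L_{\ell m}$ from the one for $X_{\ell m}$ derived just above \eqref{equ:defXlm}, apply Itô's formula to $(X_{\ell m}-X^L_{\ell m})^2$, and sum over $\ell\le L$, $|m|\le \ell$. The drift term produces $-2\sum \mu_\ell \bbE e_{\ell m}^2\le 0$, which I drop. The diffusion contribution splits into a \emph{Lipschitz part} (indices $\ell'\le\Lambda$) and a \emph{noise-tail part} (indices $\ell'>\Lambda$). Using Bessel's inequality and the Hilbert–Schmidt identity, the Lipschitz part is bounded by
\[
\bbE\|B(X(t))-B(X^L(t))\|_{\calL}^{2}\le C_{\mathrm{Lip}}^{2}\,\bbE\|X(t)-X^L(t)\|^{2}
= C_{\mathrm{Lip}}^{2}\bigl(E_L(t)+R_L(t)\bigr).
\]
For the noise-tail part, Parseval in the $(\ell,m)$ index gives $\sum_{\ell,m}\langle B(X)Y_{\ell'm'},Y_{\ell m}\rangle^{2}=\|B(X)Y_{\ell'm'}\|^{2}$, and then using the explicit form $B(u)h=T_g(u)\cdot\widetilde B h$ with $\widetilde{B}Y_{\ell'm'}=\eta_{\ell'm'}Y_{\ell'm'}$, together with the addition theorem $\sum_{|m'|\le\ell'}Y_{\ell'm'}^{2}=\tfrac{2\ell'+1}{4\pi}$ (this is exactly the content of Lemma~\ref{lem:Ylpmplm id}) and \eqref{equ:Tgu H bd}, \eqref{equ:Ebound}, I obtain
\[
\sum_{\ell'>\Lambda}\sum_{|m'|\le \ell'} A_{\ell'}\bbE\|B(X(t))Y_{\ell' m'}\|^{2}
\le C\sum_{\ell'>\Lambda}(2\ell'+1)A_{\ell'}
= C\sum_{\ell'>\Lambda}\sum_{|m'|\le \ell'} A_{\ell'}.
\]

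Combining the two parts gives the differential inequality
\[
\tfrac{\mathrm{d}}{\mathrm{d}t} E_L(t) \le C_{\mathrm{Lip}}^{2} E_L(t) + C_{\mathrm{Lip}}^{2} R_L(t) + C \sum_{\ell'>\Lambda}\sum_{|m'|\le\ell'} A_{\ell'},
\]
with $E_L(0)=0$. Grönwall's inequality on $[0,1]$, followed by integration in $t$ and the bound $\int_0^1 R_L(t)\,\mathrm{d}t\le C/L^2$ just established, yields $\int_0^1 E_L(t)\,\mathrm{d}t\le C/L^2+C\sum_{\ell'>\Lambda}\sum_{|m'|\le \ell'} A_{\ell'}$. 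Adding the bound on $\int_0^1 R_L(t)\,\mathrm{d}t$ gives the desired estimate. The main technical obstacle is the noise-tail term: on a bounded domain with uniformly bounded eigenfunctions one could bound each summand directly, but here $\|Y_{\ell' m'}\|_\infty$ grows with $\ell'$, so the crucial step is summing over $m'$ first and exploiting the addition theorem via Lemma~\ref{lem:Ylpmplm id} to recover a bound in terms of $\|g(X)\|^2$ only.
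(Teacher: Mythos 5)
Your proposal is correct and follows essentially the same route as the paper: both arguments split the error into the spatial tail $\ell>L$ (controlled via \eqref{equ:muXlm} and $\mu_\ell\ge\mu_{L+1}\asymp L^2$), the truncated-noise contribution $\ell'>\Lambda$ (controlled by summing over $m'$ first and invoking the addition theorem through Lemma~\ref{lem:Ylpmplm id}, together with \eqref{equ:Tgu H bd} and \eqref{equ:Ebound}), and a remaining Galerkin error closed by Gronwall's lemma. The only cosmetic differences are that you obtain the Gronwall inequality by applying It\^{o}'s formula to the squared coefficient differences while the paper works from the mild-solution representation \eqref{equ:defXlm} and It\^{o}'s isometry, and that you absorb the $\ell'>\Lambda$ noise as a forcing term in the differential inequality rather than isolating it as the separate additive term $A_L^{(2)}$.
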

\begin{proof}
%\footnote{(See M\"{u}ller-Gronbach--Ritter (2007), BIT, pp. 408--)}
Using \eqref{equ:defX},\eqref{equ:defXlm} and \eqref{equ:defZlm} we can write
\begin{align*}
X(t) &= \sum_{\ell=0}^L \sum_{m=-\ell}^\ell X_{\ell m}(t) Y_{\ell m}
   + \underbrace{\sum_{\ell > L} \sum_{m=-\ell}^\ell X_{\ell m}(t) Y_{\ell m}}_{R_L(t)} \\
   &= A_L^{(1)}(t) + A_L^{(2)}(t) + R_L(t),
\end{align*}
with
\begin{align*}
 A_L^{(1)}(t) &:=  
 \sum_{\ell=0}^L \sum_{m=-\ell}^\ell
 \left( \exp(-\mu_\ell t) \inprod{\xi}{Y_{\ell m}}
 + \sum_{\ell'=0}^{\Lambda} 
 \sum_{m'=-\ell'}^{\ell'}
 \sqrt{A_{\ell'}}
 Z_{\ell' m',\ell m}(t) \right) Y_{\ell m},\\
A_L^{(2)}(t) &:= 
  \sum_{\ell=0}^L \sum_{m=-\ell}^\ell
 \sum_{\ell'>\Lambda} \sum_{m'=-\ell'}^{\ell'}
 \sqrt{A_{\ell'}}
 Z_{\ell' m',\ell m}(t) Y_{\ell m},
\end{align*}
{where $Z_{\ell' m',\ell m}$ is defined as in} \eqref{equ:defZlm}.
With the solution $X^L$ of the semi-discrete problem \eqref{equ:defXL},
we have
\begin{align*}
\int_0^{t} \bbE \|X(s) - X^L(s)\|^2 \mathrm{d}s &\preceq 
\int_0^{t} \bbE \|A^{(1)}_L(s) - X^L(s)\|^2 \mathrm{d}s
+ \int_0^1 \bbE \|A^{(2)}_L(t)\|^2 \mathrm{d}t + \\
&\qquad\int_0^1 \bbE \|R_L(t)\|^2 \mathrm{d}t.
\end{align*}
We have
\[
\bbE(Z_{\ell'm',\ell m}(t)) =
\bbE\int_{0}^t \exp(-\mu_\ell(t-s))
  \inprod{B(X(s)) Y_{{\ell' m'}}}{ Y_{{\ell m}}} \mathrm{d} w_{\ell' m'}(s),
\]
It\^{o} isometry yields
\[
\bbE(Z_{\ell'm',\ell m}(t))^2 =
\int_0^t \exp(-2\mu_\ell(t-s)) 
 \bbE |\inprod{B(X(s)) Y_{{\ell' m'}}}{ Y_{{\ell m}}}|^2 \mathrm{d}s,
\]
and thus
\begin{align*}
\int_0^1 
\bbE (Z_{\ell' m',\ell m}(t))^2 \mathrm{d}t
&\preceq 
\int_0^1 \bbE ( \inprod{B(X(t)) Y_{\ell' m'}}{Y_{\ell m} })^2 \mathrm{d}t\\
&={\eta_{\ell m}^2\int_0^1 \bbE ( \inprod{T_g(X(t)) Y_{\ell' m'}}{Y_{\ell m} })^2 \mathrm{d}t}.
\end{align*}
Therefore, in view of Lemma \ref{lem:Ylpmplm id} 
{for all $\ell'\ge 0$ we have}
\begin{align}
&{
\sumlm A_{\ell'}\sum_{|m'|\le \ell'}
\int_0^1 
\bbE (Z_{\ell' m',\ell m}(t))^2 \mathrm{d}t}
\\&
{\preceq 
\sup_{\mu\nu}\eta_{\lambda\nu}^2
\int_0^1 
A_{\ell'}\sum_{|m'|\le \ell'}\sumlm 
\bbE ( \inprod{T_g(X(t)) Y_{\ell' m'}}{Y_{\ell m} })^2 \mathrm{d}t}\\
&
{\preceq 
\sup_{\mu\nu}\eta_{\lambda\nu}^2
\int_0^1 
A_{\ell'}
\frac{2\ell'+1}{4\pi}\bbE\|T_g(X(t))\|^2 \mathrm{d}t.}
\end{align}
{Hence, from} {\eqref{equ:Tgu H bd}} {and} {\eqref{equ:Ebound}} {we obtain}
%\footnote{{I think the last line $\preceq\sum_{\ell' > \Lambda} \sum_{|m'|\le \ell'} {A_{\ell'}}$ can be deleted now.}}
\begin{align*}
\int_0^1 \bbE\|A^{(2)}_L(t)\|^2 \mathrm{d}t
&\le \sumlm 
\sum_{\ell'>\Lambda}
	A_{\ell'}
\sum_{|m'|\le{\ell'}}
\int_0^1 
\bbE(Z_{\ell m,\ell' m'}(t))^2 \mathrm{d}t \\
& \le
\sum_{\ell'>\Lambda}{\frac{2\ell'+1}{4\pi}}
	A_{\ell'}
\int_0^1 
(1 + \bbE\|X(t)\|^2 )
\mathrm{d}t % \\
%& \preceq\sum_{\ell' > \Lambda} \sum_{|m'|\le \ell'} {A_{\ell'}}
\le c.
\end{align*}
From \eqref{equ:muXlm}, we have
\[
\int_0^1 \bbE \|R_L(t)\|^2 \mathrm{d}t=
 \sum_{\ell > L} {\sum_{|m|\le \ell}} \int_0^1 \bbE (X_{\ell m}(t))^2 \mathrm{d}t
 \preceq \frac{1}{L^2} \le c.
\]
{We next see that for $\ell\in\{0,\dots,L\}$, $m\in\{-\ell,\dotsc,\ell\}$ we have}
\begin{align}
{
\bbE(X_{\ell m}(t) - X^{L}_{\ell m}(t))^2
=
\sum_{\ell'=0}^{\Lambda} \sum_{m'=-\ell'}^{\ell'} 
A_{\ell'} \eta_{\ell'm'}^2\int_{0}^{t}\bbE\inprod{\big(T_g(X(s)) - T_g(X^L(s))\big)Y_{\ell'm'}}{Y_{\ell m}}^2\mathrm{d}s.
}
\end{align}
Thus, from $\|T_g(u)-T_g(v)\|\le\|g'\|_{\infty}\|u-v\|$ ($u,v\in H$) {we have}
\begin{align*}
\bbE \| A^{(1)}_L(t) - X^L(t)\|^2
%&\preceq \int_0^t \bbE \|B(X(s)) - B(X^L(s))\|^2 \mathrm{d}s} \\
% should be either $\|B(X(s)) - B(X^L(s))\|_{HL}$ or the following
&\preceq \int_0^t \bbE \|T_g(X(s)) - T_g(X^L(s))\|^2 \mathrm{d}s \\
&\preceq \int_0^t \bbE \|X(s) - X^{L}(s)\|^2 \mathrm{d}s\\
&\preceq 2c + \int_0^t \bbE \|A^{(1)}_L(s) - X^L(s)\|^2 \mathrm{d}s,
\end{align*}
{where  $X(t)-X^{L}(t) =  A_L^{(2)}(t) + R_L(t) + A_L^{(1)}(t) -X^{L}(t)$ is used in the last line.}
Since $\bbE( \langle{A^{(1)}_L(t),A^{(2)}_L(t)}\rangle ) = 0$,
we get $\bbE\| A^{(1)}_L(t)\|^2 \le \bbE \|X(t)\|^2$.
Using \eqref{equ:Ebound} and \eqref{XLbounded} we conclude that
\[
 \sup_{ t\in [0,1] } \bbE \| A^{(1)}_L(t) - X^L(t)\|^2 < \infty.
\]
The proof is completed by applying Gronwall's Lemma.
\begin{comment}
\footnote{
We have
\begin{align*}
\phi(t)&:=\int_0^t \bbE \|X(s) - X^{L}(s)\|^2 \mathrm{d}s
\preceq \hbox{\sout{$t$}}\sum A_{\ell'}\ell'+ \hbox{\sout{$t$}}\frac1{L^2}+ \int_0^t\int_0^r \bbE \|X(s) - X^{L}(s)\|^2 \mathrm{d}s \mathrm{d}r
\end{align*}
and 
\begin{align*}
\int_0^t\int_0^r \bbE \|X(s) - X^{L}(s)\|^2 \mathrm{d}s \mathrm{d}r
&\preceq 2c + \int_0^r\sup_{t\in [0,1]} \bbE \|A^{(1)}_L(s) - X^L(s)\|^2 \mathrm{d}s<\infty.
\end{align*}
Thus,
$\phi(t)\le(\sum A_{\ell'}\ell'+ \frac1{L^2})e^{t}$ $(t\in[0,1])$
}
\end{comment}
\end{proof}
%-----------------------------------------------

%-------------------------------------------------------------
In the following lemma, we discretize the time interval 
$[0,1]$ using a uniform partition of length $1/k$ and 
provide an error estimate.

\begin{lemma}\label{lem:piecewise}
%\footnote{See BIT p. 407, Lemma 6.2}
For ${k}\in \N$, with $X^L$ being defined as in \eqref{equ:defXL}, %there holds 
%\footnote{{I have read ``there holds'' is not really English.}}
{we have the following upper bound}
\[
\sum_{j=0}^{k-1} \int_{j/k}^{(j+1)/k} 
\bbE \| X^L(t) - X^L(j/k) \|^2 \mathrm{d}t \preceq 1/k.
\]
\end{lemma}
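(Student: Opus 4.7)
My plan is to mirror the proof of Lemma~\ref{lem:Xcts}, now applied to the finite-dimensional process $X^L$ on each sub-interval $[j/k,(j+1)/k]$ and summed over $j=0,\dots,k-1$. The semi-discrete system \eqref{equ:semi} admits a mild representation completely analogous to \eqref{equ:defXlm}--\eqref{equ:defZlm}, so for $t\in[j/k,(j+1)/k]$,
\[
X^L_{\ell m}(t)-X^L_{\ell m}(j/k)=[e^{-\mu_\ell(t-j/k)}-1]\,X^L_{\ell m}(j/k)+M^{(j)}_{\ell m}(t),
\]
where $M^{(j)}_{\ell m}(t)$ denotes the corresponding stochastic integral over $(j/k,t]$. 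Squaring, taking expectation, and using the It\^{o} isometry (the cross term vanishes because $X^L_{\ell m}(j/k)$ is known at time $j/k$ while $M^{(j)}_{\ell m}$ is a martingale increment) gives, exactly as in the proof of Lemma~\ref{lem:Xcts}, the splitting $\bbE\|X^L(t)-X^L(j/k)\|^2=\Gamma_1^{(j)}(t)+\Gamma_2^{(j)}(t)$.

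For the diffusion piece $\Gamma_2^{(j)}$ I would argue as for \eqref{equ:Gamma2}: after summing over $\ell\le L$ and $|m|\le\ell$, Parseval's identity collapses the inner double sum over $(\ell',m')$, giving $\Gamma_2^{(j)}(t)\le\int_{j/k}^{t}\bbE\|B(X^L(r))\|_{\calL}^2\,\mathrm{d}r\le C(t-j/k)$ by the linear growth \eqref{equ:lin growth} and \eqref{XLbounded}. Hence $\sum_{j=0}^{k-1}\int_{j/k}^{(j+1)/k}\Gamma_2^{(j)}(t)\,\mathrm{d}t\le C/(2k)$.

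The drift piece is the real work. Interchanging sums,
\[
\sum_{j=0}^{k-1}\int_{j/k}^{(j+1)/k}\Gamma_1^{(j)}(t)\,\mathrm{d}t
=\sum_{\ell=1}^{L}\sum_{|m|\le\ell}g_\ell\sum_{j=0}^{k-1}\bbE(X^L_{\ell m}(j/k))^2,
\]
with $g_\ell:=\int_{0}^{1/k}(1-e^{-\mu_\ell u})^2\,\mathrm{d}u$ (the $\ell=0$ summand vanishes because $\mu_0=0$). Combining $(1-e^{-x})^2\le 1$ and $(1-e^{-x})^2\le 1-e^{-2x}\le 2x$ yields the two-regime bound $g_\ell\le\min(1/k,\mu_\ell/k^2)$. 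Substituting the explicit representation
\[
\bbE(X^L_{\ell m}(j/k))^2 = e^{-2\mu_\ell j/k}\langle\xi,Y_{\ell m}\rangle^2+\int_{0}^{j/k}e^{-2\mu_\ell(j/k-r)}a_{\ell m}(r)\,\mathrm{d}r,
\]
with $a_{\ell m}(r):=\sum_{\ell'\le\Lambda,|m'|\le\ell'}A_{\ell'}\bbE|\langle B(X^L(r))Y_{\ell'm'},Y_{\ell m}\rangle|^2$, I would then estimate the resulting geometric sums by
\[
\sum_{j=0}^{k-1}e^{-2\mu_\ell j/k}\le 1+\frac{k}{2\mu_\ell},\qquad
\sum_{j}\int_{0}^{j/k}e^{-2\mu_\ell(j/k-r)}a_{\ell m}(r)\,\mathrm{d}r\le\Bigl(2+\frac{k}{\mu_\ell}\Bigr)\int_{0}^{1}a_{\ell m}(r)\,\mathrm{d}r.
\]

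The main obstacle is to show that both products $g_\ell\,(1+k/(2\mu_\ell))$ and $g_\ell\,(2+k/\mu_\ell)$ are $O(1/k)$ uniformly in $\ell\ge 1$ (and hence in $L$); this is exactly the point where the two-sided bound on $g_\ell$ is essential. One verifies it in two regimes: when $\mu_\ell\le k$, the tight bound $g_\ell\le\mu_\ell/k^2$ cancels the $k/\mu_\ell$ factor; when $\mu_\ell>k$, one uses $g_\ell\le 1/k$ while the geometric prefactor is itself $O(1)$. Once this uniform estimate is in hand, the initial-condition contribution is bounded by $(C/k)\|\xi\|^2$ and the stochastic contribution by $(C/k)\int_{0}^{1}\sum_{\ell,m}a_{\ell m}(r)\,\mathrm{d}r\le (C/k)\int_{0}^{1}\bbE\|B(X^L(r))\|_{\calL}^2\,\mathrm{d}r\preceq 1/k$, which together with the $\Gamma_2$ estimate completes the proof.
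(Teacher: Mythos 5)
Your proof is correct, but it takes a genuinely different route from the paper's. The paper does not re-derive anything at the level of Fourier coefficients: it simply observes that Lemma~\ref{lem:Xcts} remains valid for $X^L$ with $\psi$ replaced by $\overline{\psi}(t)=\sum_{\ell\le L}\sum_{|m|\le\ell}\mu_\ell\,\bbE((X^L_{\ell m}(t))^2)$, which is integrable on $[0,1]$ uniformly in $L$, and then confronts the difficulty that the resulting estimate $\bbE\|X^L(s)-X^L(t)\|^2\preceq|t-s|\,(1+\overline{\psi}(\min\{s,t\}))$ cannot be evaluated at the grid points $j/k$ since $\overline{\psi}$ is only known to be in $L_1$. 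The paper's device is to pick, in each subinterval, a point $s_j$ with $\overline{\psi}(s_j)/k\le\int_{j/k}^{(j+1)/k}\overline{\psi}(t)\,\mathrm{d}t$ (a mean-value argument) and then reach $X^L(j/k)$ from $X^L(t)$ by the triangle inequality through the nearby good points $s_{j-1},s_j$, so that only $\overline{\psi}(s_{j-1})$ and $\overline{\psi}(s_j)$ ever appear and the sum over $j$ telescopes into $\int_0^1\overline{\psi}\preceq 1$. You instead go back to the explicit mild representation of $X^L_{\ell m}$, sum the drift contribution over $j$ exactly, and neutralize the potentially large factor $k/\mu_\ell$ coming from the geometric sums by the two-regime bound $g_\ell\le\min(1/k,\mu_\ell/k^2)$; this is the same balancing act the paper performs implicitly through $\overline{\psi}\in L_1$, but made fully explicit. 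Your version is more computational and self-contained (it does not need the good-points trick, nor even the statement of Lemma~\ref{lem:Xcts}); the paper's is shorter once Lemma~\ref{lem:Xcts} is in hand. One small point worth stating explicitly in your write-up: the constants in your final bounds depend only on $\|\xi\|^2$, $\mathrm{Tr}(Q)$ and the uniform moment bound \eqref{XLbounded}, hence are independent of $L$, $\Lambda$ and $k$, which is what the lemma requires.
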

%---------------
\begin{proof}
The results of Lemma~\ref{lem:Xcts} are valid for $X^L$,
 with $\psi$ being replaced by
\[
\overline{\psi}(t) = \sum_{\ell \le L} 
 \sum_{|m| \le \ell}\mu_\ell \bbE( (X^L_{\ell,m}(t))^2 ).
\]
For $j \in \{0,\ldots,k-1\}$ take
\begin{comment}
\footnote{
Suppose $ \int_{j/k}^{(j+1)/k} \overline{\psi}(t) \mathrm{d}t<\overline{\psi}(s)/{k}$ for any $s\in [j/k, (j+1)/k]$. Then, integrating both sides yields
$$
\frac1k\int_{j/k}^{(j+1)/k} \overline{\psi}(t) \mathrm{d}t
<
\frac1k\int_{j/k}^{(j+1)/k} \overline{\psi}(s) \mathrm{d}s,
$$
contradiction.
}
\end{comment}
 $s_j \in [j/k, (j+1)/k]$ with
\[
\frac{\overline{\psi}(s_j)}{k} \le \int_{j/k}^{(j+1)/k} \overline{\psi}(t) \mathrm{d}t.
\]
On the first subinterval, we have
\[
  \int_0^{1/k} \bbE \|X^L(t) - X^L(0)\|^2 \mathrm{d}t
  \le \frac{2}{k} \sup_{ t \in [0,1] } \bbE \| X^L(t)\|^2 \preceq 1/k.
\]
On the subintervals $[j/k,(j+1)/k]$ with $j \ge 1$ we estimate as follows.
If $t\in [j/k,s_j]$, then
\begin{align*}
\bbE\| X^L(t) - X^L(j/k) \|^2 
 &\preceq \bbE \| X^L(t) - X^L(s_{j-1})\|^2
  + \bbE \| X^L(s_j) - X^L(j/k)\|^2 \\
 & \preceq \frac{1}{k} (1 + \overline{\psi}(s_{j-1})) \\
 & \preceq \frac{1}{k} + \int_{(j-1)/k}^{j/k} \overline{\psi}(s) \mathrm{d}s.
\end{align*}
If $t \in [s_j, (j+1)/k]$, then
\begin{align*}
&\bbE\|X^L(t) - X^L(j/k)\|^2 \\
&\le \bbE\|X^L(t)-X^L(s_j)\|^2 + 
     \bbE\|X^L(s_j) - X^L(s_{j-1})\|^2 +
     \bbE\|X^L(s_{j-1}) - X^L(j/k)\|^2 \\
&\preceq \frac{1}{k}( 1+ \overline{\psi}(s_j) + \overline{\psi}(s_{j-1}) ) \\
&\preceq \frac{1}{k} + \int_{(j-1)/k}^{(j+1)/k} \overline{\psi}(s) \mathrm{d}s.
\end{align*}
Hence, we conclude that
\[
\int_{j/k}^{(j+1)/k}
\bbE \|X^L(t) - X^L(j/k)\|^2 \mathrm{d}t
\preceq \frac{1}{k^2} + 
\frac{1}{k} \int_{(j-1)/k}^{(j+1)/k} \overline{\psi}(s)ds,
\]
from which the result follows.
\end{proof}
%%%%%%%%%%%%%%%%%%%%%%%%%%%%%%%%%%%%%%%%%%
We record the following estimates for the properties regarding the spectral representations of resolvents by M\"{u}ller-Gronbach and Ritter \cite{GroRit07a}.
\begin{lemma}\label{lem:semigroup}
Suppose $\ell \le L$ and $\ell' \le \Lambda$. Then, for $j=0,\ldots,n_{\ell'}-1$,
\[
\int_{ t_{j,\ell'} }^1 \frac{\Gamma^2_\ell(t)}{ \Gamma^2_\ell( t_{j,\ell'} )} \mathrm{d}t \le 2/\mu_\ell
\]
as well  as
\[
\int_{t_{j,\ell'}}^1 \left( \frac{\Gamma_\ell(t)}{\Gamma_\ell( t_{j,\ell'} )}
-\exp(-\mu_\ell(t- t_{j,\ell'} ))\right)^2 \mathrm{d}t \preceq 1/n^*,
\]
where $n^* = \max\{ n_\ell: \ell=0,\ldots,\Lambda \}$.
Furthermore, for $0\le s \le t \le 1$,
\[
\left| 1 - \frac{\Gamma_\ell(t)}{\Gamma_\ell(s)} \right|
\le \min(1, \mu_\ell(t-s)).
\]
\end{lemma}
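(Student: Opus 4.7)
The plan is to exploit the closed-form product structure of $\Gamma_\ell$: because $\{\tau_\nu\}_{\nu=0}^K$ is the union of all subgrids, every node $t_{j,\ell'}$ coincides with some $\tau_{k_0}$, and on each subinterval $(\tau_{k-1},\tau_k]$ we have
\[
\Gamma_\ell(t)=\Gamma_\ell(\tau_{k-1})\cdot\frac{1}{1+\mu_\ell(t-\tau_{k-1})}.
\]
Setting $\widetilde C_k:=\Gamma_\ell(\tau_k)/\Gamma_\ell(\tau_{k_0})$ reduces every claim to a telescoping computation over $k>k_0$.

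For the first inequality, I would evaluate $\int_{\tau_{k-1}}^{\tau_k}\Gamma_\ell^2(t)/\Gamma_\ell^2(\tau_{k_0})\,\mathrm{d}t$ in closed form and obtain $(1/\mu_\ell)\,\widetilde C_{k-1}(\widetilde C_{k-1}-\widetilde C_k)$. Summing over $k>k_0$ and using $\widetilde C_{k-1}(\widetilde C_{k-1}-\widetilde C_k)\le \widetilde C_{k-1}^2-\widetilde C_k^2$ telescopes to a bound by $\widetilde C_{k_0}^2=1$, which yields $1/\mu_\ell$, even stronger than the stated $2/\mu_\ell$. The third inequality is then straightforward: nonnegativity of $1-\Gamma_\ell(t)/\Gamma_\ell(s)$ is immediate because every factor of $\Gamma_\ell$ lies in $(0,1]$, and for the second half I would write $\Gamma_\ell(t)/\Gamma_\ell(s)=\prod_i a_i$ with $a_i=1/(1+\mu_\ell\delta_i)$ and $\sum_i\delta_i=t-s$, the $\delta_i$ accounting for the partial contributions of the subintervals containing $s$ and $t$, and then apply the elementary inequality $1-\prod_i a_i\le\sum_i(1-a_i)$ together with $1-a_i=\mu_\ell\delta_i/(1+\mu_\ell\delta_i)\le\mu_\ell\delta_i$.

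The main obstacle is the second inequality, which quantifies how well the rational product $\Gamma_\ell(t)/\Gamma_\ell(\tau_{k_0})$ approximates $\exp(-\mu_\ell(t-\tau_{k_0}))$. Using $e^{-x}\ge 1-x$ one verifies the pointwise comparison $0\le 1/(1+x)-e^{-x}\le x^2/(1+x)$. Writing $a_i=1/(1+\mu_\ell h_i)$ and $b_i=e^{-\mu_\ell h_i}$ with $h_i=\tau_i-\tau_{i-1}$, I would apply the telescoping identity
\[
\prod_i a_i-\prod_i b_i=\sum_j\Big(\prod_{i<j}a_i\Big)(a_j-b_j)\Big(\prod_{i>j}b_i\Big),
\]
combined with $\max_i h_i\le 1/n^*$, to obtain a pointwise bound that, upon squaring and integrating over $[\tau_{k_0},1]$ using the telescoping device from the first inequality to absorb the factors $\prod_{i<j}a_i$, yields the required estimate $\preceq 1/n^*$. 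The technical core is the careful bookkeeping of the partial subintervals containing $\tau_{k_0}$ and $t$, together with the elementary estimate $\sum_i h_i^2\le(\max_i h_i)\sum_i h_i\le 1/n^*$ to convert the sum of quadratic local errors into a linear-in-step bound. As $\Gamma_\ell$ depends only on the one-dimensional grid $\{\tau_\nu\}$ and on the scalar $\mu_\ell$, this argument is essentially the one in \cite{GroRit07a} and transfers to the spherical setting without modification.
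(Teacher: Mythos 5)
The paper itself does not prove this lemma: its entire ``proof'' is the citation of \cite[Lemma 6.3]{GroRit07a}, justified by the fact that the statement involves only the scalar quantities $\mu_\ell$, the merged grid $\{\tau_\nu\}$ and the rational products $\Gamma_\ell$, so the argument of the reference applies verbatim --- your closing remark makes exactly this point, and in attempting to reconstruct the proof you are doing more than the paper does. Your treatment of the first and third inequalities is correct and complete: the exact evaluation $\int_{\tau_{k-1}}^{\tau_k}\Gamma_\ell^2(t)/\Gamma_\ell^2(\tau_{k_0})\,\mathrm{d}t=\mu_\ell^{-1}\widetilde{C}_{k-1}(\widetilde{C}_{k-1}-\widetilde{C}_k)$, the comparison $\widetilde{C}_{k-1}(\widetilde{C}_{k-1}-\widetilde{C}_k)\le\widetilde{C}_{k-1}^2-\widetilde{C}_k^2$ and the telescoping even give the sharper constant $1/\mu_\ell$, and the third inequality follows as you say from $1-\prod_ia_i\le\sum_i(1-a_i)$ with $a_i\ge(1+\mu_\ell\delta_i)^{-1}$ and $\sum_i\delta_i=t-s$.

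The gap is in the second inequality. Write $D(t)=\Gamma_\ell(t)/\Gamma_\ell(\tau_{k_0})-\mathrm{e}^{-\mu_\ell(t-\tau_{k_0})}\ge0$. The ingredients you list (the pointwise bound $0\le(1+x)^{-1}-\mathrm{e}^{-x}\le x^2/(1+x)$, the telescoping of $\prod_ia_i-\prod_ib_i$, and $\max_ih_i\le1/n^*$) yield at best pointwise estimates of the form $D(t)\preceq(\mu_\ell^2/n^*)\,\Gamma_\ell(t)/\Gamma_\ell(\tau_{k_0})$, and ``squaring and integrating'' with the help of the first inequality then gives only $\int D^2\preceq\mu_\ell^3/(n^*)^2$; combined with the trivial bound $\int D^2\preceq1/\mu_\ell$ this is $(n^*)^{-2/3}$ uniformly in $\ell$, which falls short of the claimed $1/n^*$. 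Two further devices are needed and are absent from your sketch. First, a case split: when $\mu_\ell>n^*$ the local errors $a_j-b_j$ are not small, and one must instead use $\int D^2\le2\int\Gamma_\ell^2/\Gamma_\ell^2(\tau_{k_0})+2\int\mathrm{e}^{-2\mu_\ell(\cdot)}\preceq1/\mu_\ell<1/n^*$. Second, when $\mu_\ell\le n^*$, so that $\mu_\ell h_i\le1$ for every $i$, one needs the exponential decay $\Gamma_\ell(t)/\Gamma_\ell(\tau_{k_0})\le\mathrm{e}^{-\mu_\ell(t-\tau_{k_0})/2}$ (from $\ln(1+x)\ge x/2$ on $[0,1]$), which upgrades the integral to $\int D^2\preceq(\mu_\ell^4/(n^*)^2)\int_0^\infty t^2\mathrm{e}^{-\mu_\ell t}\,\mathrm{d}t\asymp\mu_\ell/(n^*)^2\le1/n^*$. (Alternatively one can avoid squaring the pointwise bound altogether via $\int D^2\le\|D\|_\infty\int D$, using $\int_0^1D\preceq1/\mu_\ell+1/n^*$ and $\|D\|_\infty\preceq\min(1,\mu_\ell/n^*)$.) Note that the paper performs precisely this $\mu_\ell/n_{\ell'}\lessgtr1$ dichotomy for the closely related estimate \eqref{equ:BIT p. 417} in the proof of Theorem~\ref{thm:main result}; without one of these devices your middle estimate does not close.
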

\begin{proof}
The statement follows from \cite[Lemma 6.3]{GroRit07a}.
\end{proof}
%%%%%%%%%%%%%%%%%%%%%%%%%%%%%%%%%%%%%%%
\begin{comment}
{
We assume $\what{X}^L$ uses a total of $N$ evaluations of scalar Brownian motions $(w_{\ell'm'})$.} blue{Clearly, we have}
$$
blue{\Lambda+1\le N}.
$$
%
%\sout{with a function $\mathfrak{N}$.}
{ Assume further that $A_\ell\sim \ell^{-\alpha}$ ($\alpha>0$). (Maybe assume this somewhere before). If} blue{$\alpha\in(0,2]$} {(not sure this can happen as we assume trace class but I will keep this for now)}
%\sout{, with a function $C_1(\ell',N)$ such that $\sup\limits_{\ell'\in\mathbb{N}_0,N\in\mathbb{N}}1/C(\ell',N)\preceq 1$ we take}
%\sout{$n_{\ell'}=\lceil \sqrt{A_{\ell'}} \Lambda^{1-\frac{\alpha}{2}}  C_1(\ell',N) \rceil)$.}
$$
blue{n_{\ell'}=\lceil \sqrt{A_{\ell'}} N^{\delta}\rceil),}
$$
blue{where $\delta=\delta(\alpha)\ge 2-\frac{\alpha}{2}$.}
%
If $\alpha\in(blue{2},\infty)$, we take 
%
%\sout{$n_{\ell'}=\lceil \sqrt{A_{\ell'}} \Lambda C_2(\ell',N) \rceil),$}
%\sout{where the function $C_2$ satisfies $\sup\limits_{\ell'\in\mathbb{N}_0,N\in\mathbb{N}}1/C_2(\ell',N)\preceq 1$}.
%
%
$$
blue{n_{\ell'}=\lceil \sqrt{A_{\ell'}} N^\delta \rceil),}
$$
blue{with $\delta=\delta(\alpha)\ge 1$.}
\end{comment}
The following lemma is important to justify the use of the non-uniform step size in Theorem \ref{thm:main result}. 
\begin{lemma}\label{lem:bd D}
{Let the operator $B$ be defined by} \eqref{equ:def B}.
Then, for any $u\in H$ we have
\begin{align}
\sumlm 
	\sum_{\ell'=0}^\Lambda \sum_{ |m'|\le\ell' }
		\langle {B}(u) Y_{\ell',m'}, Y_{\ell,m}\rangle^2
\frac{A_{\ell'}}{n_{\ell'}}
\preceq
	1 + \|u\|^2.
\end{align}
\end{lemma}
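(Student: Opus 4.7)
My plan is to unfold the definition of $B$ and reduce the inner double sum to the identity from Lemma~\ref{lem:Ylpmplm id}, then absorb the $\ell'$-summation into $\mathrm{Tr}(Q)$.

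First I would use the form \eqref{equ:def B}, which gives $B(u)Y_{\ell' m'} = \eta_{\ell' m'} \, T_g(u)\, Y_{\ell' m'}$, so that
\[
\langle B(u) Y_{\ell' m'},\,Y_{\ell m}\rangle^2
= \eta_{\ell' m'}^2 \,\langle T_g(u)\, Y_{\ell' m'},\,Y_{\ell m}\rangle^2.
\]
Pulling out the uniform bound $\eta_{\ell' m'}^2 \le (\sup_{\lambda,\nu}|\eta_{\lambda\nu}|)^2$ (which is finite by assumption) and interchanging the order of summation (all terms are non-negative, so Tonelli applies), the left-hand side is bounded by
\[
\Bigl(\sup_{\lambda,\nu}|\eta_{\lambda\nu}|\Bigr)^2
\sum_{\ell'=0}^{\Lambda}\frac{A_{\ell'}}{n_{\ell'}}
\sum_{|m'|\le\ell'}\sumlm \langle T_g(u)\, Y_{\ell' m'},\,Y_{\ell m}\rangle^2.
\]

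Next I would apply Lemma~\ref{lem:Ylpmplm id} with $f = T_g(u)$, which collapses the inner double sum to $\frac{2\ell'+1}{4\pi}\|T_g(u)\|^2$. This yields
\[
\sumlm \sum_{\ell'=0}^{\Lambda}\sum_{|m'|\le\ell'}
\langle B(u) Y_{\ell' m'},\,Y_{\ell m}\rangle^2 \frac{A_{\ell'}}{n_{\ell'}}
\preceq \|T_g(u)\|^2 \sum_{\ell'=0}^{\Lambda} \frac{A_{\ell'}(2\ell'+1)}{n_{\ell'}}.
\]

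Finally, since each $n_{\ell'}$ is a positive integer (so $1/n_{\ell'}\le 1$), the summability assumption \eqref{equ:Aell summable} gives
\[
\sum_{\ell'=0}^{\Lambda} \frac{A_{\ell'}(2\ell'+1)}{n_{\ell'}}
\le \sum_{\ell'=0}^{\infty}A_{\ell'}(2\ell'+1) = \mathrm{Tr}(Q) < \infty,
\]
and \eqref{equ:Tgu H bd} provides $\|T_g(u)\|^2 \preceq 1 + \|u\|^2$. Combining these gives the desired estimate.

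I do not expect any real obstacle in this argument: the key structural input is Lemma~\ref{lem:Ylpmplm id}, which already encapsulates the non-trivial use of the addition theorem. The bound is intentionally loose in $n_{\ell'}$ (throwing away the factor $1/n_{\ell'}$), which is precisely what makes the argument work uniformly over the choice of step sizes $n_{\ell'} \ge 1$; this is important for Theorem~\ref{thm:main result}, where the freedom to take $n_{\ell'}$ non-uniform is exploited elsewhere in the analysis.
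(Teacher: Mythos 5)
Your proposal is correct and follows essentially the same route as the paper's own proof: both reduce the claim to Lemma~\ref{lem:Ylpmplm id} applied with $f = T_g(u)$ after extracting $\sup_{\lambda,\nu}|\eta_{\lambda\nu}|^2$, and both close the argument with the trace condition \eqref{equ:Aell summable} and the bound \eqref{equ:Tgu H bd}. The only cosmetic difference is that you make the steps of discarding $1/n_{\ell'}\le 1$ and invoking Tonelli explicit, which the paper leaves implicit.
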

\begin{proof}
{From} Lemma \ref{lem:Ylpmplm id}, for each $\ell'\in \{0,\dotsc,\Lambda\}$,
%{let $f=B(u)$},
%\footnote{{``$B(u)$'' is an operator, not a function.}}
 we have
\begin{align}
\sumlm \sum_{|m'|\le\ell'} 
 \langle fY_{\ell',m'}, Y_{\ell,m}\rangle^2
%=&
%\int_{\mS^2} |f(\bx)|^2
%\sum_{|m'|\le\ell'}Y_{\ell',m'}(\bx){Y_{\ell',m'}(\bx)} \mathrm{d}\varsigma(\bx)\\
=&
\frac{2\ell'+1}{4\pi}\int_{\mS^2} |f(\bx)|^2 \mathrm{d}\varsigma(\bx),
\quad\text{{for any $f\in H$}.}
\end{align}
Thus, multiplying $\frac{A_{\ell'}}{n_{\ell'}}$ to the both sides and summing over $\ell'$ yields.
\begin{align}
\sumlm 
	\sum_{\ell'=0}^\Lambda \sum_{ |m'|\le\ell' }
		\langle f Y_{\ell',m'}, Y_{\ell,m}\rangle^2
\frac{A_{\ell'}}{n_{\ell'}}
&=
\sum_{\ell'=0}^\Lambda
	\frac{2\ell'+1}{4\pi}
		\| f \|^2 \frac{A_{\ell'}}{n_{\ell'}}.\label{equ:bd Dlmlm'}
\end{align}
Since 
${\langle {B}(u) Y_{\ell',m'}, Y_{\ell,m}\rangle^2
=
\eta_{\ell' m'}^2\langle T_{g}(u) Y_{\ell',m'}, Y_{\ell,m}\rangle^2}$, 
{in view of} {\eqref{equ:Aell summable}}, \eqref{equ:Tgu H bd} 
and $\sup_{\ell,m}|\eta_{\ell m}|<\infty$ the statement follows.
\end{proof}
{The following lemma is needed in the error analysis of the fully discrete solution}.
\begin{lemma}\label{lem:bounded}
{For the fully discrete solution $\what{X}^L$ defined as in} \eqref{eqn:full discrete}, 
{we have the following upper bound}
\[
\sup_{t \in [0,1]} \bbE \|\what{X}^L(t)\|^2 \preceq 1.
\]
\end{lemma}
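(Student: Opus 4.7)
The plan is to start from the explicit representation \eqref{equ:recur} for $\what{X}^L_{\ell m}(t)$, apply the It\^{o} isometry to compute the second moment, reduce the resulting sum over Fourier modes using Lemma \ref{lem:Ylpmplm id} and the growth estimate \eqref{equ:Tgu H bd}, and finally close the estimate by a discrete Gronwall argument on the common grid $\{\tau_i\}$.

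For $t\in(\tau_{k-1},\tau_k]$, I would square \eqref{equ:recur} and take expectations. Since the Brownian motions $(w_{\ell'm'})$ are independent across distinct $(\ell',m')$ and, for fixed $(\ell',m')$, the increments $w_{\ell'm'}(t_{j,\ell'})-w_{\ell'm'}(t_{j-1,\ell'})$ live on disjoint intervals with $\mathcal{F}_{t_{j-1,\ell'}}$-measurable integrands, all cross terms vanish and the It\^{o} isometry delivers
\begin{align*}
\bbE|\what{X}^L_{\ell m}(t)|^2 ={}& \Gamma_\ell^2(t)\inprod{\xi}{Y_{\ell m}}^2 \\
&{}+ \sum_{\ell'=0}^{\Lambda}\sum_{|m'|\le\ell'}\sum_{j:\, t_{j,\ell'}\le\tau_k}\frac{A_{\ell'}\,\Gamma_\ell^2(t)}{\Gamma_\ell^2(t_{j-1,\ell'})\,n_{\ell'}}\,\bbE\inprod{B(\what{X}^L(t_{j-1,\ell'}))Y_{\ell'm'}}{Y_{\ell m}}^2.
\end{align*}
Summing over $\ell\le L$ and $|m|\le\ell$, using that $\Gamma_\ell$ is non-increasing so $\Gamma_\ell(t)/\Gamma_\ell(t_{j-1,\ell'})\le 1$, extending the $\ell$-sum to $\N_0$ by positivity, and then applying Lemma \ref{lem:Ylpmplm id} with $f=g(\what{X}^L(t_{j-1,\ell'}))$ together with $\sup_{\lambda,\nu}|\eta_{\lambda\nu}|<\infty$ and \eqref{equ:Tgu H bd}, the inner triple sum collapses to a constant multiple of $(2\ell'+1)(1+\bbE\|\what{X}^L(t_{j-1,\ell'})\|^2)$, giving
\[
1+\bbE\|\what{X}^L(t)\|^2 \le (1+\|\xi\|^2) + C_0\sum_{\ell'=0}^{\Lambda}\frac{A_{\ell'}(2\ell'+1)}{n_{\ell'}}\sum_{j:\, t_{j,\ell'}\le\tau_k}\bigl(1+\bbE\|\what{X}^L(t_{j-1,\ell'})\|^2\bigr).
\]

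To close the estimate, set $a_i:=1+\bbE\|\what{X}^L(\tau_i)\|^2$ and note that each $t_{j-1,\ell'}$ equals some $\tau_i$ with $i<k$; rearranging the double sum yields $\sum_{i<k}\sigma_i a_i$, where $\sigma_i:=\sum_{\ell'\in\calK_i^{*}}A_{\ell'}(2\ell'+1)/n_{\ell'}$ and $\calK_i^{*}:=\{\ell'\le\Lambda\colon \tau_i\in\{t_{0,\ell'},\dotsc,t_{n_{\ell'}-1,\ell'}\}\}$. A counting argument shows that each $\ell'$ contributes to $\sigma_i$ for exactly $n_{\ell'}$ indices $i$, so $\sum_i\sigma_i\le\sum_{\ell'=0}^{\Lambda}A_{\ell'}(2\ell'+1)\le\mathrm{Tr}(Q)<\infty$ by \eqref{equ:Aell summable}---crucially, a bound independent of $L$, $\Lambda$ and $(n_{\ell'})$. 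The discrete Gronwall inequality then yields $a_k\le a_0\exp(C_0\,\mathrm{Tr}(Q))$ uniformly in $k$, and plugging this back into the displayed inequality bounds $\bbE\|\what{X}^L(t)\|^2$ uniformly over $t\in[0,1]$. The delicate step is this final summability: without the control $\sum_i\sigma_i\le\mathrm{Tr}(Q)$, the Gronwall constant would blow up as the total number of grid points $K$ grows with the $n_{\ell'}$.
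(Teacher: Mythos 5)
Your proposal is correct, and the moment identity you derive from \eqref{equ:recur} via the It\^{o} isometry is exactly the paper's \eqref{equ:6 13}; the difference lies entirely in how the recursion is closed. The paper introduces an auxiliary process $\wtd{X}^L$ that continuously interpolates the noise increments, so that the weights $(t\wedge t_{j,\ell'}-t_{j-1,\ell'})$ telescope into an integral $\int_0^t f(s)\,\mathrm{d}s$ and the \emph{continuous} Gronwall lemma applies; the uniform bound on $\wtd{X}^L$ is then transferred back to $\what{X}^L$ through \eqref{equ:6 13}, using that the two processes coincide at the grid points $t_{j,\ell'}$. You instead stay on the merged grid $\{\tau_i\}$ and run a \emph{discrete} Gronwall argument directly on $a_i=1+\bbE\|\what{X}^L(\tau_i)\|^2$. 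The step that makes this work --- and which you correctly isolate as the delicate point --- is the counting argument: each $\ell'$ contributes to $\sigma_i$ for exactly $n_{\ell'}$ indices $i$, so $\sum_i\sigma_i\le\sum_{\ell'\le\Lambda}(2\ell'+1)A_{\ell'}\le\mathrm{Tr}(Q)$, and the Gronwall factor $\exp(C_0\,\mathrm{Tr}(Q))$ is uniform in $L$, $\Lambda$, $(n_{\ell'})$ and $K$. This is precisely the discrete analogue of what the continuous interpolation buys the paper, and your route is arguably more economical since it avoids constructing $\wtd{X}^L$ and proving its separate a priori bound \eqref{Xtldbounded}; the paper's route follows M\"{u}ller-Gronbach and Ritter and keeps the interpolated process available as a tool. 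Two cosmetic points: the function fed into Lemma \ref{lem:Ylpmplm id} should be written $f=T_g(\what{X}^L(t_{j-1,\ell'}))$ (the Nemytskii image, an element of $H$) rather than $g(\what{X}^L(\cdot))$, and the measurability of the integrands is with respect to the discrete filtration $\mathcal{G}_{k_0}$ generated by the sampled Brownian values, as in the paper, rather than a generic $\mathcal{F}_{t_{j-1,\ell'}}$; neither affects the validity of the argument.
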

\begin{proof}
Following \cite{GroRit07a}, we introduce the process which continuously interpolates the {noise} of $\what{X}^L(t)$,
\[
\wtd{X}^{L}(t) = \sum_{\ell=0}^L \wtd{X}_{\ell m}(t) Y_{\ell m} 
\]
with $\wtd{X}_{\ell m}(0) = \inprod{\xi}{Y_{\ell m}}$ and 
\[
\begin{aligned}
\wtd{X}_{\ell m}(t) 
 &= \frac{\Gamma_j(t)}{\Gamma_j(\tau_{k-1})}
\left(
\wtd{X}_{\ell m}(\tau_{k-1}) \right.\\
&\left. \quad + \sum_{\ell' \in \calK_k}
  \sum_{|m'|\le \ell'} \sqrt{A_{\ell'}} \inprod{B(\wtd{X}^L(s_{k,\ell'}) ) Y_{\ell'm'}}{Y_{\ell m}}
    \frac{\Gamma_\ell(\tau_{k-1})}{\Gamma_\ell(s_{k,\ell'})}
    ( w_{\ell'm'}(t) - w_{\ell' m'}(s_{k,\ell'}))
 \right)
\end{aligned}
\]
for $t \in (\tau_{k-1},\tau_k]$.
{In comparison with the} equation \eqref{equ:drift Euler}, $\wtd{X}^L$ is obtained
from $\what{X}^L$ by replacing the Brownian increments 
$w_{\ell' m'}(\tau_k) - w_{\ell' m'}(s_{k,\ell'})$ 
by $w_{\ell' m'}(t) - w_{\ell' m'}(s_{k,\ell'})$.

Note that $\what{X}^L_{\ell m}$ and $\wtd{X}_{\ell m}$ as well as
$\what{X}^L$ and $\wtd{X}^L$ coincide at the points $\tau_k$. Moreover,
by the construction of these processes we have
$\wtd{X}_{\ell m}(\tau_k)$ and $\wtd{X}^L(\tau_k)$ are measurable
with respect to
$${\mathcal{G}_{k}:=
\sigma(\{w_{\ell' m'}(t_{j,\ell'}): t_{j,\ell'} \le \tau_k, \ell'\le \Lambda, |m'|\le \ell'\})}.$$ %\sout{$\sigma(\{a_{\ell m}(t_{k,\ell'}): t_{k,\ell'} \le \tau_k, \ell'\le \Lambda\})$}.

Thus, if $t \in (\tau_{k-1},\tau_k]$, we obtain
\begin{align*}
&\bbE( \wtd{X}_{\ell m}(t) - \wtd{X}_{\ell m}(\tau_{k-1}) )^2 \\
&{
=\bbE\bigg(\left( 1 - \frac{\Gamma_\ell(t)}{\Gamma_\ell(\tau_{k-1})} \right)
    \wtd{X}_{\ell m}(\tau_{k-1}) }\\
&+\frac{\Gamma_\ell(t)}{\Gamma_\ell(\tau_{k-1})}
		\sum_{\ell' \in \calK_k } \sum_{|m'|\le \ell'}
		\sqrt{A_{\ell'}} \inprod{B(\wtd{X}^L(s_{k,\ell'}) ) Y_{\ell'm'}}{Y_{\ell m}}
    \frac{\Gamma_\ell(\tau_{k-1})}{\Gamma_\ell(s_{k,\ell'})}
    ( w_{\ell'm'}(t) - w_{\ell' m'}(s_{k,\ell'}))
  \bigg)^2.
\end{align*}
Now, from the definition of $s_{k,\ell'}$ for $\ell'\in\calK_k$, and $\{\tau_0,\dotsc,\tau_K\}$,
we have $\tau_{k_0}=s_{k,\ell'}$
for some $k_0\in\{0,\dotsc,k-1\}$.
 Thus, for each $(\ell',m')$ % from the properties of the standard Brownian motions.
\begin{align}
&
\bbE\bigg[ \bbE\bigg[
    \wtd{X}_{\ell m}(\tau_{k-1})
		\inprod{B(\wtd{X}^L(s_{k,\ell'}) ) Y_{\ell'm'}}{Y_{\ell m}}
    {( w_{\ell'm'}(t) - w_{\ell' m'}(s_{k,\ell'}))}
	\bigg|\,\mathcal{G}_{k_0}\bigg]\bigg]\nonumber
\\
&=
\bbE\bigg[
		    {( w_{\ell'm'}(t) - w_{\ell' m'}(s_{k,\ell'}))}
		\bbE\bigg[
		    \wtd{X}_{\ell m}(\tau_{k-1})
		    \inprod{B(\wtd{X}^L(s_{k,\ell'}) ) Y_{\ell'm'}}{Y_{\ell m}}
			\bigg|\,\mathcal{G}_{k_0}\bigg]
\bigg]
\\
&=
\bbE\bigg[
	{w_{\ell'm'}(t) - w_{\ell' m'}(s_{k,\ell'})}
\bigg]
\bbE\bigg[\bbE\bigg[
		    \wtd{X}_{\ell m}(\tau_{k-1})
		    \inprod{B(\wtd{X}^L(s_{k,\ell'}) ) Y_{\ell'm'}}{Y_{\ell m}}
			\bigg|\,\mathcal{G}_{k_0}
\bigg]\bigg]=0.
\end{align}
Further, from $\mathcal{G}_{k_0}$-measurability of $\wtd{X}^L(s_{k,\ell'})$ % and the property of Brownian motion,
 we have
\begin{align*}
&
\bbE\left[\inprod{B(\wtd{X}^L(s_{k,\ell'}) ) Y_{\ell'm'}}{Y_{\ell m}}^2
    ( w_{\ell'm'}(t) - w_{\ell' m'}(s_{k,\ell'}))^2\right]
\\
&=
\bbE\left[\inprod{B(\wtd{X}^L(s_{k,\ell'}) ) Y_{\ell'm'}}{Y_{\ell m}}^2
\right]
\bbE\left[
    ( w_{\ell'm'}(t) - w_{\ell' m'}(s_{k,\ell'}))^2
\right] \\
&=
\bbE\left[\inprod{B(\wtd{X}^L(s_{k,\ell'}) ) Y_{\ell'm'}}{Y_{\ell m}}^2
\right]
\bbE\left[
    ( w_{\ell'm'}(t) - w_{\ell' m'}(s_{k,\ell'}))^2
\right] \\
&=
\bbE\left[\inprod{B(\wtd{X}^L(s_{k,\ell'}) ) Y_{\ell'm'}}{Y_{\ell m}}^2
\right]
(t-s_{k,\ell'}).
\end{align*}
Thus, it follows that
\begin{align*}
&\bbE( \wtd{X}_{\ell m}(t) - \wtd{X}_{\ell m}(\tau_{k-1}) )^2\\
&=
\left( 1 - \frac{\Gamma_\ell(t)}{\Gamma_\ell(\tau_{k-1})} \right)^2
    \bbE(\wtd{X}_{\ell m}(\tau_{k-1}))^2 \\
	&+\frac{\Gamma_\ell(t)^2}{\Gamma_\ell(\tau_{k-1})^2}
		\sum_{\ell' \in \calK_k } \sum_{|m'|\le \ell'}
		\bbE\bigg[
		\inprod{B(\wtd{X}^L(s_{k,\ell'}) ) Y_{\ell'm'}}{Y_{\ell m}}^2
		\bigg]
    \frac{\Gamma_\ell(\tau_{k-1})^2}{\Gamma_\ell(s_{k,\ell'})^2}
    A_{\ell'}(t-s_{k,\ell'}) \\
&\le
\bbE(\wtd{X}_{\ell m}(\tau_{k-1}))^2
+
\sum_{\ell' \in \calK_k }
	\sum_{|m'|\le \ell'}
	D_{\ell',m',\ell,m}(s_{k,\ell'})
	\frac{\Gamma_\ell(t)^2}{\Gamma_\ell(s_{k,\ell'})^2}
	    A_{\ell'}(t-s_{k,\ell'})\\
&\le
\bbE(\wtd{X}_{\ell m}(\tau_{k-1}))^2
+
\sum_{\ell' \in \calK_k }
	\sum_{|m'|\le \ell'}
	D_{\ell',m',\ell,m}(s_{k,\ell'})
	\frac{\Gamma_\ell(s_{k,\ell'})^2}{\Gamma_\ell(s_{k,\ell'})^2}
	    A_{\ell'}(\tau_k-s_{k,\ell'})\\
&\le
 \bbE(\wtd{X}_{\ell m}(\tau_{k-1}))^2
 +
 \sum_{\ell' \in \calK_k }
	 \sum_{|m'|\le \ell'}
   	D_{\ell',m',\ell,m}( s_{k,\ell'} )
 	   \frac{ A_{\ell'}}{n_{\ell'}},
\end{align*}
\begin{comment}
\begin{empheq}[box=\shadebox]{align}
 &= \left( 1 - \frac{\Gamma_\ell(t)}{\Gamma_\ell(\tau_{k-1})} \right)^2 
   \bbE( (\wtd{X}_{\ell m}(\tau_{k-1}))^2) \\
 &+ \sum_{\ell' \in \calK_k } D_{\ell',m',\ell,m}(s_{k,\ell'})
  \frac{\Gamma^2_\ell(\tau_{k-1})}{ \Gamma^2_{\ell}(s_{k,\ell'})} (t-s_{k,\ell'}) \\
&\le \bbE( (\wtd{X}_{\ell} (\tau_{k-1}))^2) + 
     \sum_{ \ell' \in \calK_k} D_{\ell',m',\ell,m} (s_{k,\ell'})/ n_{\ell'}
\end{empheq}
\end{comment}
where
$
  D_{\ell',m',\ell,m}(t) := 
%\bbE( \langle T_g(\widehat{X}^L(t)) Y_{\ell',m'}, Y_{\ell,m}\rangle^2 ).
\bbE( \langle {B}(\widehat{X}^L(t)) Y_{\ell',m'}, Y_{\ell,m}\rangle^2 ).
$
%\sout{We note that (need to check!!!)}
%
%\begin{empheq}[box=\shadebox]{equation}\label{Dbounded}
%\sum_{\ell=0}^L \sum_{|m| \le \ell} D_{\ell', m ',\ell,m} (t) \preceq 1 + %\bbE\|\what{X}^L(t)\|^2
%\end{empheq}
%\sout{due to .....We therefore get}
Thus, by virtue of Lemma \ref{lem:bd D} we have
\[
\bbE\| \wtd{X}(t) - \wtd{X}(\tau_{k-1}) \|^2
\preceq 1 + \max_{j=0,\ldots,k-1} \bbE \|\wtd{X}^L(\tau_j) \|^2,
\]
and we conclude that
\[
  f(s) := \sup_{ r \in [0,s] } \bbE \|{\wtd{X}^L(r)} \|
\]
is finite for $s\in [0,1]$, since $\bbE \|\wtd{X}^L(0)\|^2 = \|\xi\|^2 <\infty$.

Similar to \eqref{equ:recur}, we have
\begin{align*}
 \wtd{X}_{\ell m} (t) &= \Gamma_j(t) \inprod{\xi}{Y_{\ell m}} \\
  & +\sum_{\ell'=0}^{\Lambda} \sum_{|m'|\le \ell'} 
    \sum_{t_{j,\ell'} \le \tau_m }
   {\sqrt{A_{\ell'}}} \inprod{B(\wtd{X}(t_{j-1,\ell'}) Y_{\ell' m'}}{Y_{\ell m}} 
      \frac{\Gamma_\ell(t)}{\Gamma_\ell(t_{j-1,\ell'}) } \\
  &  \cdot ( w_{\ell' m'}(t \wedge t_{j,\ell'}) - w_{\ell' m'}(t_{j-1,\ell'}) ),
\end{align*}
which implies
\begin{align*}
\bbE( (\wtd{X}_{\ell m})^2 ) &= \Gamma^2_\ell(t) \inprod{\xi}{Y_{\ell m}}^2 \\
 & + \sum_{\ell'=0}^{\Lambda} 
    \sum_{|m'| \le \ell'}
  \sum_{t_{j,\ell'} \le \tau_k}
      D_{\ell',m',\ell,m} 
      \frac{\Gamma^2_\ell(t)}{\Gamma^2_\ell(t_{j-1,\ell'}) }
      A_{\ell'} (t \wedge t_{j,\ell'} - t_{j-1,\ell'}).
\end{align*}
%Using \eqref{Dbounded} 
Applying Lemma~\ref{lem:bd D} again, 
we have
\begin{align*}
 \bbE \|{\wtd{X}^L(t)}\|^2 & \preceq \|\xi\|^2 + 
     \sum_{\ell'=0}^{\Lambda}
	     {\sum_{|m'|\le\ell'}}
		     A_{\ell'} 
		     \sum_{t_{j,\ell'} \le \tau_k}
		     (1 + f(t_{j-1,\ell'}))(t \wedge t_{j,\ell'} - t_{j-1,\ell'}) \\
  & \preceq 1 + \int_0^t f(s) \mathrm{d}s,
\end{align*}
and due to Gronwall's lemma we can conclude that 
\begin{equation}\label{Xtldbounded}
 \sup_{t \in [0,1] } \bbE \| \wtd{X}^L(t) \|^2 \preceq 1.
\end{equation}
For the process $\what{X}^L$ we apply \eqref{equ:recur} again
to get
\begin{equation}\label{equ:6 13}
\bbE((\what{X}^L_{\ell m})^2) = \Gamma_\ell^2(t) \inprod{\xi}{Y_{\ell m}}^2
 +
	\sum_{\ell'=0}^{\Lambda} 
		\sum_{|m'|\le \ell'}
			A_{\ell'}/n_{\ell'} 
        \sum_{ t_{j,\ell'} \le \tau_k }
	       D_{\ell',m',\ell,m}(t_{j-1,\ell'}) 
           \frac{\Gamma^2_\ell(t)}
            {\Gamma^2_\ell(t_{j-1,\ell'})}.
\end{equation}
Using \eqref{Xtldbounded} we conclude that
\[
\bbE \|\what{X}^L(t)\|^2 \preceq \| \xi \|^2 + 
      \sum_{\ell'=0}^\Lambda
       \sum_{|m'|\le\ell'}A_{\ell'}
      (1+ \max_{j=0,\ldots,n_{\ell'}} \bbE \| \wtd{X}^L(t_{j,\ell'}) \|^2)
     \preceq 1.
\]
\end{proof}
%%%%%%%%%%%%%%%%%%%%%%%%%%%%%%%%%%%%%%%%%%%%%%%
To proceed, we want a spatially-discrete counterpart of Lemma \ref{lem:Xcts}. 
It turns out our scheme is almost square-mean continuous, and the discontinuity is controlled by the discretization of the Wiener process.
\begin{lemma}\label{lem:Xhat_cts}
{For the fully discrete solution $\what{X}^L$ defined as in} \eqref{eqn:full discrete}, 
{we have}:
\[
\bbE\| \what{X}^L(s) - \what{X}^L(t)\|^2
\preceq (t-s) (1+ \what{\psi}(s)) + 
 \sum_{\ell'=0}^{\Lambda}
	 {\sum_{|m'|\le \ell'}}
		 \frac{A_{\ell'}}{n_{\ell'}},
\]
where
$
\what{\psi}(s) = \sum_{\ell=0}^L \sum_{|m'|\le \ell'} \mu_\ell \bbE [ (\what{X}^L_{\ell,m}(s))^2 ]
$. 
Moreover,
\begin{equation}\label{int bounded}
 \int_0^1 \what{\psi}(s) \mathrm{d}s \preceq 1.
\end{equation}
\end{lemma}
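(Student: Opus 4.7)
The plan is to use the recursive formula \eqref{equ:recur} and decompose the increment $\what{X}^L_{\ell m}(t)-\what{X}^L_{\ell m}(s)$, for $s<t$ with $s\in(\tau_{k'-1},\tau_{k'}]$ and $t\in(\tau_{k-1},\tau_k]$, into three pieces: (I) the semigroup change $(\Gamma_\ell(t)-\Gamma_\ell(s))\inprod{\xi}{Y_{\ell m}}$ on the initial condition; (II) the same semigroup factor $\Gamma_\ell(t)-\Gamma_\ell(s)$ applied to the past Brownian increments indexed by $t_{j,\ell'}\le\tau_{k'}$; and (III) the new Brownian increments indexed by $\tau_{k'}<t_{j,\ell'}\le\tau_k$ (which is empty when $k=k'$). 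By the same measurability and independence argument used in the proof of Lemma~\ref{lem:bounded}, It\^{o}'s isometry applies term-by-term.

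Parts (I) and (II) are treated together. Writing $\Gamma_\ell(t)-\Gamma_\ell(s)=\Gamma_\ell(s)(\Gamma_\ell(t)/\Gamma_\ell(s)-1)$ and using Lemma~\ref{lem:semigroup} in the sharpened form $(1-\Gamma_\ell(t)/\Gamma_\ell(s))^2\le\min(1,\mu_\ell(t-s))^2\le \mu_\ell(t-s)$, the sum over $(\ell,m)$ of $\bbE[(\mathrm{I}+\mathrm{II})_{\ell m}^2]$ factorises as $(t-s)\sum_{\ell,m}\mu_\ell\bbE[(\what{X}^L_{\ell m}(s))^2]$ by matching the remaining expression against the spectral formula \eqref{equ:6 13} at time $s$; this equals $(t-s)\what{\psi}(s)$ exactly. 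For (III), It\^{o}'s isometry together with $\Gamma_\ell(t)/\Gamma_\ell(t_{j-1,\ell'})\le 1$ (monotonicity of $\Gamma_\ell$) yields
\[
\sum_{\ell,m}\bbE[(\mathrm{III})_{\ell m}^2]\le\sum_{\ell'=0}^{\Lambda}\sum_{|m'|\le\ell'}\frac{A_{\ell'}}{n_{\ell'}}\sum_{\tau_{k'}<t_{j,\ell'}\le\tau_k}\sum_{\ell,m}D_{\ell',m',\ell,m}(t_{j-1,\ell'}).
\]
The inner $(\ell,m,m')$-sum is $\preceq (2\ell'+1)$ by Lemma~\ref{lem:Ylpmplm id} applied with $f=g(\what{X}^L(t_{j-1,\ell'}))$, (\ref{equ:Tgu H bd}), and Lemma~\ref{lem:bounded}. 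The number of grid points of level $\ell'$ in $(\tau_{k'},\tau_k]$ is at most $(t-s)n_{\ell'}+1$, so the total is $\preceq (t-s)\sum_{\ell'}(2\ell'+1)A_{\ell'}+\sum_{\ell'}(2\ell'+1)A_{\ell'}/n_{\ell'}$; the first piece is bounded by a multiple of $(t-s)$ via \eqref{equ:Aell summable}, and absorbing into $(t-s)(1+\what{\psi}(s))$ yields the first claim.

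For $\int_0^1\what{\psi}(s)\,\mathrm{d}s\preceq 1$, I would substitute \eqref{equ:6 13}, multiply by $\mu_\ell$, and integrate. The initial-condition contribution sums to $\preceq\|\xi\|^2$ thanks to $\mu_\ell\int_0^1\Gamma_\ell^2(s)\,\mathrm{d}s\le 2$ from Lemma~\ref{lem:semigroup}. For the stochastic contribution, the time window on which the $j$-th term is active is a subset of $[t_{j-1,\ell'},1]$, so Lemma~\ref{lem:semigroup} gives $\mu_\ell\int_{t_{j-1,\ell'}}^1\Gamma_\ell^2(s)/\Gamma_\ell^2(t_{j-1,\ell'})\,\mathrm{d}s\le 2$. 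Pulling the $(\ell,m)$-sum inside, the $(\ell,m,m')$-sum of $D_{\ell',m',\ell,m}$ is again $\preceq (2\ell'+1)$, and then summing $A_{\ell'}/n_{\ell'}$ over the at most $n_{\ell'}$ values of $j$ cancels the $1/n_{\ell'}$, producing $\preceq\sum_{\ell'=0}^\infty(2\ell'+1)A_{\ell'}<\infty$ by \eqref{equ:Aell summable}.

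The main obstacle is bookkeeping rather than any deep idea: one must verify that (I)+(II) reproduces $(t-s)\what{\psi}(s)$ exactly by matching its structure against \eqref{equ:6 13}, and, for (III), correctly count grid points of each individual level $\ell'$ lying in the union-grid window $(\tau_{k'},\tau_k]$, using that $\tau_k-\tau_{k'}\le (t-s)+(\tau_k-\tau_{k-1})$. The bound $(t-s)n_{\ell'}+1$ on this count is precisely what cleanly splits the (III) estimate into a $(t-s)$-linear piece (absorbed into $(t-s)(1+\what{\psi}(s))$) and the residual $\sum_{\ell',m'}A_{\ell'}/n_{\ell'}$ appearing in the statement.
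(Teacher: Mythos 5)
Your proposal is correct and follows essentially the same route as the paper: once your pieces (I) and (II) are recombined into $\bigl(\Gamma_\ell(t)/\Gamma_\ell(s)-1\bigr)\what{X}^L_{\ell m}(s)$, the decomposition, the use of Lemma~\ref{lem:semigroup} to extract the factor $\mu_\ell(t-s)$, the grid-point count $\le 1+n_{\ell'}(t-s)$, and the re-indexing of $\sum_k\int_{\tau_{k-1}}^{\tau_k}$ into $\sum_j\int_{t_{j-1,\ell'}}^{1}$ for \eqref{int bounded} coincide with the paper's argument. The only (harmless) imprecision is that the (I)+(II) contribution is bounded above by, not equal to, $(t-s)\what{\psi}(s)$.
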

\begin{proof}
For each $\ell,m,\ell',m'$, we have
\begin{align}
&\sum_{k=1}^K
	\int_{\tau_{k-1}}^{\tau_k}
		\sum_{ t_{j,\ell'} \le \tau_k }
			D_{\ell',m',\ell,m}(t_{j-1,\ell'}) 
				\frac{\Gamma^2_\ell(s)}
					{\Gamma^2_\ell(t_{j-1,\ell'})} \mathrm{d}s
\\
&
%{\Bigg(
=
\sum_{ 0< t_{j,\ell'} \le \tau_1 }
	\int_{0}^{\tau_2}
			D_{\ell',m',\ell,m}(t_{j-1,\ell'}) 
				\frac{\Gamma^2_\ell(s)}
					{\Gamma^2_\ell(t_{j-1,\ell'})} \mathrm{d}s\\
&\quad
%{+
+
\sum_{ \tau_1 < t_{j,\ell'} \le \tau_2 }
	\int_{\tau_1}^{\tau_2}
			D_{\ell',m',\ell,m}(t_{j-1,\ell'}) 
				\frac{\Gamma^2_\ell(s)}
					{\Gamma^2_\ell(t_{j-1,\ell'})} \mathrm{d}s\\
&\quad
%{+
+
\sum_{k=3}^K
		\int_{\tau_{k-1}}^{\tau_k}
			\sum_{ t_{j,\ell'} \le \tau_k }
				D_{\ell',m',\ell,m}(t_{j-1,\ell'}) 
					\frac{\Gamma^2_\ell(s)}
						{\Gamma^2_\ell(t_{j-1,\ell'})} \mathrm{d}s \\
%\Bigg)}\\
&=
\sum_{k=1}^K
\sum_{ \tau_{k-1} < t_{j,\ell'} \le \tau_{k} }
	\int_{\tau_{k-1}}^{1}
			D_{\ell',m',\ell,m}(t_{j-1,\ell'}) 
				\frac{\Gamma^2_\ell(s)}
					{\Gamma^2_\ell(t_{j-1,\ell'})} \mathrm{d}s \\
&\le
\sum_{k=1}^K
\sum_{ \tau_{k-1} < t_{j,\ell'} \le \tau_{k} }
	\int_{t_{j-1,\ell'}}^{1}
			D_{\ell',m',\ell,m}(t_{j-1,\ell'}) 
				\frac{\Gamma^2_\ell(s)}
					{\Gamma^2_\ell(t_{j-1,\ell'})} \mathrm{d}s.
\end{align}

%\sout{Since $s \in (\tau_{k-1},\tau_k)$ and $t_{j,\ell'} \le \tau_k$ implies $t_{j-1,\ell'} \le s$} 
From Lemma~\ref{lem:semigroup} and \eqref{equ:6 13}, it follows that 
\begin{align*}
\int_0^1 \bbE ( (\what{X}^L_{\ell m}(s))^2)
&=\sum_{k=1}^K\int_{\tau_{k-1}}^{\tau_k}\bbE ( (\what{X}^L_{\ell m}(s))^2) \mathrm{d}s
  \\
  &\le \inprod{\xi}{Y_{\ell m}}^2 \int_0^1 \Gamma_j^2(s) \mathrm{d}s \\
  & +
	  \sum_{\ell'=0}^{\Lambda} 
		  \sum_{|m'|\le \ell'}
		    \frac{ A_{\ell'} }{ n_{\ell'} }
		        \sum_{ j=0 }^{ n_{\ell'}-1 }
		       D_{\ell',m',\ell,m}(t_{j-1,\ell'})
			       \int_{ t_{j-1,\ell'} }^1
		           \frac{\Gamma^2_\ell(s)}
		            {\Gamma^2_\ell(t_{j-1,\ell'})} \mathrm{d}s \\
  & \preceq \frac{1}{\mu_\ell}
      \left( \inprod{\xi}{Y_{\ell m}}^2
   +
	   \sum_{\ell'=0}^{\Lambda}
		  \sum_{|m'|\le \ell'} \frac{A_{\ell'}}{n_{\ell'}}
		   \sum_{j=0}^{n_{\ell'}-1} D_{\ell', m', \ell,m}(t_{j,\ell'})\right).
\end{align*}
It follows that
\[
\int_0^1 \what{\psi}(s) \mathrm{d}s \preceq \|\xi\|^2 + 
\sum_{\ell'=0}^{\Lambda}
{\sum_{|m'|\le \ell'}}
		\frac{A_{\ell'}}{ n_{\ell'}} \sum_{j=0}^{n_{\ell'}-1}
       (1 + \bbE\|{\what{X}^L(t_{j,\ell'})} \|^2).
\]
From Lemma~\ref{lem:bounded}, we obtain \eqref{int bounded}.

Assume that $s<t$ with $s\in[\tau_{k-1},\tau_k]$ and $t \in (\tau_{\zeta-1},\tau_\zeta]$ for
$k \le \zeta$. Then
\begin{align*}
&\bbE ( \what{X}_{\ell m}^L(s) - \what{X}_{\ell m}^L(t) )^2 \\
&\quad=
\left(1 - \frac{\Gamma_j(t)}{\Gamma_j(s)}  \right)^2
 \bbE(\what{Y}_{\ell m}(s))^2
 + \sum_{\ell'=0}^{\Lambda}
 \sum_{|m'|\le \ell'}
  \frac{A_{\ell'}}{n_{\ell'}} 
  \sum_{j \in \calK_{\ell'}(s,t)}
  D_{\ell',m',\ell,m}(t_{j-1,\ell'})
  \frac{\Gamma^2_\ell(t)}{\Gamma^2_{\ell}(t_{j-1,\ell'})},
\end{align*}
where
\[
\calK_{\ell'}(s,t) = \{ j \in \{1,\ldots,n_{\ell'}\} : t_{j,\ell'} \in (\tau_k, \tau_{\zeta}] \}\qquad
\text{if $s > \tau_{k-1}$,}
\]
and
\[
\calK_{\ell'}(s,t) = \{ j \in \{1,\ldots,n_{\ell'}\} : t_{j,\ell'} \in [\tau_k, \tau_{\zeta}] \}
\qquad
\text{if $s = \tau_{k-1}$.}
\] 
By Lemma~\ref{lem:semigroup}, we have
\[
 \bbE(\what{X}_{\ell m}(s) - \what{X}_{\ell m}(t))^2
 \preceq \mu_\ell (t-s) \bbE( (\what{X}_{\ell m}(s))^2 )
  + \sum_{\ell'=0}^\Lambda
	  \sum_{|m'|\le \ell'}
		  \frac{A_\ell'}{n_{\ell'}} 
       \sum_{j \in \calK_{\ell'}(s,t)} 
          D_{\ell',m',\ell,m}(t_{j-1,\ell'}).
\]
Note that $\#\calK_{\ell'}(s,t) \le 1 + n_{\ell'}(t-s)$, then use %\sout{\eqref{Dbounded} together with}
Lemma~\ref{lem:bd D} and \ref{lem:bounded} to obtain
\begin{align*}
\bbE\| \what{X}^{{L}}(s) - \what{X}^{{L}}(t)\|^2
 &\preceq (t-s) \what{\psi}(s) +
 \sum_{\ell'=0}^{\Lambda}
 \sum_{|m'|\le \ell'}
	 \frac{A_{\ell'}}{n_{\ell'}} \#\calK_{\ell'}(s,t)\\
 &\preceq (t-s) (1 + \what{\psi}(s)) + \sum_{\ell'=0}^{\Lambda}
 \sum_{|m'|\le \ell'}
 \frac{A_{\ell'}}{n_{\ell'}}.
\end{align*}
\end{proof}
%%%%%%%%%%%%%%%%%%%%%%%%%%%%%%%%%%%%%%%%%%%%%%%%%
\begin{comment}
In view of Lemmata~\ref{lem:bounded} and \ref{lem:Xhat_cts}, we may proceed as in the proof of 
%Theorem~\ref{thm:semidiscrete} 
{Lemma~\ref{lem:piecewise}}
to obtain the following error bound for piecewise constant interpolation of $\what{X}^L$.
\end{comment}
%\footnote{{It seems that Lemma~\ref{lem:sumXhatL} directly (Lemma \ref{lem:bounded} is used in \ref{lem:Xhat_cts} but) follows from Lemma \ref{lem:Xhat_cts}. I think the proof is correct? But then the by   Gronbach-M\"{u}ller--Ritter, BIT}
%\begin{quote}
%In view of Lemmata 6.4 (continuity result with the function $\psi$) and 6.5 (boundedness of the moment) we may proceed as in the proof of Lemma 4.1 (there is no Lemma 4.1, it should be Proposition 4.1)
%to obtain the following error bound for piecewise constant interpolation of $\hat{X}$.
%\end{quote} is even more confusing. }
{We need the following error bound for piecewise constant interpolation of $\what{X}^L$ to show our main result.}
\begin{lemma}\label{lem:sumXhatL}
\[
\sum_{j=1}^{n_{\ell'}}
\int_{t_{j-1,\ell'}}^{t_{j,\ell'}} \bbE \| \what{X}^L(t) - \what{X}^L(t_{j-1,\ell'})\|^2 \mathrm{d}t
\preceq \frac{1}{n_{\ell'}} +
\sum_{\lambda'=0}^\Lambda
		\sum_{|\mu'|\le \lambda'}
			\frac{A_{\lambda'}}{n_{\lambda'}}.
\]
\end{lemma}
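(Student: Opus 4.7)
The plan is to mimic the argument of Lemma~\ref{lem:piecewise}, with the regularity bound of Lemma~\ref{lem:Xcts} replaced by its discrete counterpart from Lemma~\ref{lem:Xhat_cts}. The key input is
\[
\bbE\|\what{X}^L(s)-\what{X}^L(t)\|^2 \preceq (t-s)(1+\what{\psi}(s))+\sum_{\lambda'=0}^{\Lambda}\sum_{|\mu'|\le\lambda'}\frac{A_{\lambda'}}{n_{\lambda'}},\qquad s\le t,
\]
together with $\int_0^1\what{\psi}(s)\,\mathrm{d}s\preceq 1$. Compared to the setting of Lemma~\ref{lem:piecewise}, one merely has to track the extra additive term $\sum_{\lambda',\mu'}A_{\lambda'}/n_{\lambda'}$; after integration over a subinterval of length $1/n_{\ell'}$ and summation over $j$, this yields $\sum_{\lambda',\mu'}A_{\lambda'}/n_{\lambda'}$, which is exactly the second term of the stated bound.

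The first step is to select auxiliary points, mirroring the choice of $s_j$ in Lemma~\ref{lem:piecewise}. For each $j\in\{1,\dots,n_{\ell'}\}$, one can pick $\sigma_j\in[t_{j-1,\ell'},t_{j,\ell'}]$ with
\[
\frac{\what{\psi}(\sigma_j)}{n_{\ell'}}\le \int_{t_{j-1,\ell'}}^{t_{j,\ell'}}\what{\psi}(s)\,\mathrm{d}s,
\]
which exists by the mean-value principle for integrable functions. These representative points will allow the sum $\sum_j \what{\psi}(\sigma_j)/n_{\ell'}$ to be controlled by $\int_0^1\what{\psi}(s)\,\mathrm{d}s\preceq 1$, bypassing the issue that the $\what{\psi}$ values at the grid points $t_{j-1,\ell'}$ need not be finite on average.

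The second step is a two-step triangle inequality. For $j\ge 2$ and $t\in[t_{j-1,\ell'},t_{j,\ell'}]$, write
\[
\|\what{X}^L(t)-\what{X}^L(t_{j-1,\ell'})\|^2 \preceq \|\what{X}^L(t)-\what{X}^L(\sigma_{j-1})\|^2 + \|\what{X}^L(\sigma_{j-1})-\what{X}^L(t_{j-1,\ell'})\|^2,
\]
so that Lemma~\ref{lem:Xhat_cts} can be applied to each piece with the \emph{smaller} time argument equal to $\sigma_{j-1}\in[t_{j-2,\ell'},t_{j-1,\ell'}]$. Since $t-\sigma_{j-1}\le 2/n_{\ell'}$ and $t_{j-1,\ell'}-\sigma_{j-1}\le 1/n_{\ell'}$, both pieces are bounded by $(1/n_{\ell'})(1+\what{\psi}(\sigma_{j-1}))+\sum_{\lambda',\mu'}A_{\lambda'}/n_{\lambda'}$. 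Integrating over $t\in[t_{j-1,\ell'},t_{j,\ell'}]$ and summing over $j=2,\dots,n_{\ell'}$, the geometric factors collapse as $\frac{1}{n_{\ell'}^2}\sum_{j\ge 2}\what{\psi}(\sigma_{j-1})\le \frac{1}{n_{\ell'}}\int_0^1\what{\psi}(s)\,\mathrm{d}s\preceq 1/n_{\ell'}$, while the constant part contributes $(n_{\ell'}-1)/n_{\ell'}^2\preceq 1/n_{\ell'}$ and the noise residuals contribute $\sum_{\lambda',\mu'}A_{\lambda'}/n_{\lambda'}$.

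The third step handles the first subinterval $[0,t_{1,\ell'}]$ directly: using Lemma~\ref{lem:bounded} and $\what{X}^L(0)=\xi$,
\[
\int_{0}^{t_{1,\ell'}}\bbE\|\what{X}^L(t)-\what{X}^L(0)\|^2\,\mathrm{d}t
\le \frac{2}{n_{\ell'}}\Bigl(\sup_{t\in[0,1]}\bbE\|\what{X}^L(t)\|^2+\|\xi\|^2\Bigr)\preceq \frac{1}{n_{\ell'}},
\]
which fits the claimed bound. The only real subtlety in the argument, inherited from Lemma~\ref{lem:piecewise}, is the necessity of the detour through $\sigma_{j-1}$ to keep $\what{\psi}$ evaluated at a point whose Riemann-sum contribution is integrable; the discrete noise term from Lemma~\ref{lem:Xhat_cts} poses no new difficulty since it is independent of $j$ and simply survives the summation.
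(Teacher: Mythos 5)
Your proposal is correct and follows essentially the same route as the paper: both reduce the claim to the mean-square continuity estimate of Lemma~\ref{lem:Xhat_cts} together with $\int_0^1\what{\psi}(s)\,\mathrm{d}s\preceq 1$, with the first subinterval handled via the uniform bound of Lemma~\ref{lem:bounded}. The only difference is one of care in execution: you reinstate the auxiliary points $\sigma_j$ from Lemma~\ref{lem:piecewise} so that $\what{\psi}$ is only ever evaluated at points whose contribution is controlled by its integral, whereas the paper applies the estimate directly with $\what{\psi}$ written at the integration variable; your version is, if anything, the more rigorous of the two.
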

\begin{proof}
%\footnote{{I gave a proof but it is very short.}}
Lemma~\ref{lem:Xhat_cts} implies 
\begin{align*}
\sum_{j=1}^{n_{\ell'}}
\int_{t_{j-1,\ell'}}^{t_{j,\ell'}}& \bbE \| \what{X}^L(t) - \what{X}^L(t_{j-1,\ell'})\|^2 \mathrm{d}t\\
&\preceq 
\sum_{\ell'=0}^{\Lambda}
		 {\sum_{|m'|\le \ell'}}
			 \frac{A_{\ell'}}{n_{\ell'}} + 
\sum_{j=1}^{n_{\ell'}}
\int_{t_{j-1,\ell'}}^{t_{j,\ell'}} 
	\bigg[
		\bigg(\sup_{s\in(t_{j-1,\ell'},t_{j,\ell'})}|s-t_{j-1,\ell'}|\bigg)(1+\what{\psi}(t))
	\bigg]\mathrm{d}t\\
&\preceq 
	\sum_{\ell'=0}^{\Lambda}
		 {\sum_{|m'|\le \ell'}}
			 \frac{A_{\ell'}}{n_{\ell'}} +
	\frac1{n_{\ell'}}\Big(1+\int_{0}^{1}\what{\psi}(t)\mathrm{d}t\Big)
\preceq 	 \sum_{\ell'=0}^{\Lambda}
		 {\sum_{|m'|\le \ell'}}
			 \frac{A_{\ell'}}{n_{\ell'}} + \frac1{n_{\ell'}} .
\end{align*}
\end{proof}
We are ready to state our main result.
\begin{theorem}\label{thm:main result}
{The fully discrete solution defined in \eqref{eqn:full discrete} satisfies the following error estimate}
\[
\bbE\left(  \int_0^1 \|X(t) - \what{X}^L(t) \|^2 \mathrm{d}t \right)
\preceq \frac{1}{L^2} + 
  \sum_{\ell'=0}^{\Lambda}
	 \sum_{|m'|\le \ell'}
	  \frac{A_{\ell'}}{n_{\ell'}} + 
\sum_{\ell'>\Lambda}
	\sum_{|m'|\le \ell'}
		A_{\ell'}.
\]
\end{theorem}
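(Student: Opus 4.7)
The overall plan is to apply the triangle inequality to split the error as
\[
\bbE\int_0^1\|X(t)-\what X^L(t)\|^2\mathrm{d}t\preceq \bbE\int_0^1\|X(t)-X^L(t)\|^2\mathrm{d}t+\bbE\int_0^1\|X^L(t)-\what X^L(t)\|^2\mathrm{d}t.
\]
Theorem \ref{thm:semidiscrete} already absorbs the first summand into $\tfrac{1}{L^2}+\sum_{\ell'>\Lambda}\sum_{|m'|\le\ell'}A_{\ell'}$, so the entire task is to show
\[
\bbE\int_0^1\|X^L(t)-\what X^L(t)\|^2\mathrm{d}t\preceq \sum_{\ell'=0}^{\Lambda}\sum_{|m'|\le\ell'}\frac{A_{\ell'}}{n_{\ell'}},
\]
modulo a Gronwall-closable tail involving the same quantity.

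The second step is to expand $X^L_{\ell m}(t)-\what X^L_{\ell m}(t)$ coordinate-wise using the mild formula for $X^L$ and \eqref{equ:recur} for $\what X^L$, and decompose the difference into three natural pieces: (i) the deterministic semigroup replacement $(\exp(-\mu_\ell t)-\Gamma_\ell(t))\inprod{\xi}{Y_{\ell m}}$; (ii) the stochastic semigroup replacement that exchanges $\exp(-\mu_\ell(t-s))$ with the step function $\Gamma_\ell(t)/\Gamma_\ell(t_{j-1,\ell'})$ on each subinterval $[t_{j-1,\ell'},t_{j,\ell'})$; (iii) the piecewise-constant interpolation error of the integrand that replaces $B(X^L(s))$ by $B(\what X^L(t_{j-1,\ell'}))$. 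For (i) and (ii), It\^{o}'s isometry combined with Lemma \ref{lem:semigroup} converts each contribution to a sum over $j$ weighted by $D_{\ell',m',\ell,m}(t_{j-1,\ell'})A_{\ell'}/n_{\ell'}$; summing in $(\ell,m)$ via Lemma \ref{lem:bd D} and then controlling the resulting $1+\bbE\|\what X^L\|^2$ by Lemma \ref{lem:bounded} produces the desired $\sum_{\ell'=0}^{\Lambda}\sum_{|m'|\le\ell'}A_{\ell'}/n_{\ell'}$ bound.

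For (iii), after It\^o's isometry one is left with contributions of the form
\[
\sum_{\ell'=0}^\Lambda\sum_{|m'|\le\ell'}A_{\ell'}\int_{t_{j-1,\ell'}}^{t_{j,\ell'}}\!\!\bbE\big\langle(B(X^L(s))-B(\what X^L(t_{j-1,\ell'})))Y_{\ell'm'},Y_{\ell m}\big\rangle^2\mathrm{d}s,
\]
which after summation in $(\ell,m)$ via Lemma \ref{lem:Ylpmplm id} and the Lipschitz estimate \eqref{equ:Lip} becomes a multiple of $\bbE\|X^L(s)-\what X^L(t_{j-1,\ell'})\|^2$. A further triangle inequality split $X^L(s)-\what X^L(t_{j-1,\ell'})=(X^L(s)-\what X^L(s))+(\what X^L(s)-\what X^L(t_{j-1,\ell'}))$ together with Lemma \ref{lem:sumXhatL} sends the second piece into the advertised bound, while the first piece produces a term of the form $\int_0^t\bbE\|X^L(s)-\what X^L(s)\|^2\mathrm{d}s$.

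Assembling the three contributions yields an integral inequality $\phi(t)\preceq R+\int_0^t\phi(s)\mathrm{d}s$ where $\phi(t):=\bbE\int_0^t\|X^L(r)-\what X^L(r)\|^2\mathrm{d}r$ and $R=\sum_{\ell'=0}^\Lambda\sum_{|m'|\le\ell'}A_{\ell'}/n_{\ell'}$; Gronwall's inequality then closes the estimate. The principal obstacle is the careful bookkeeping for the non-uniform grid in step (ii): the telescoping argument across the union partition $\{\tau_k\}$ must respect that each frequency $\ell'$ uses its own subgrid, and the semigroup quotients $\Gamma_\ell(t)/\Gamma_\ell(t_{j-1,\ell'})$ must be compared with the true exponential only at the refined level where Lemma \ref{lem:semigroup} gives the uniform $1/n^\ast$ control, otherwise unwanted cross-terms between distinct $\ell'$-grids would spoil the separation into $A_{\ell'}/n_{\ell'}$.
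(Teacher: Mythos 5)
Your overall strategy coincides with the paper's: reduce to the semi-discrete error via Theorem \ref{thm:semidiscrete}, decompose $X^L_{\ell m}(t)-\what{X}^L_{\ell m}(t)$ coordinate-wise into a deterministic semigroup-replacement term plus stochastic terms, estimate each by It\^{o}'s isometry together with Lemmas \ref{lem:semigroup}, \ref{lem:Ylpmplm id}, \ref{lem:bd D}, \ref{lem:piecewise}, \ref{lem:sumXhatL} and \ref{lem:bounded}, and close with Gronwall. The one place where your argument has a genuine gap is the decomposition identity itself: your three pieces do not reconstitute $X^L(t)-\what{X}^L(t)$ at off-grid times $t\in(\tau_{k-1},\tau_k]$. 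The scheme \eqref{equ:recur} sums \emph{full} Brownian increments over every $\ell'$-subinterval whose right endpoint satisfies $t_{j,\ell'}\le\tau_k$; hence for $\ell'\in\calK_k$ it contains the increment $w_{\ell'm'}(\tau_k)-w_{\ell'm'}(s_{k,\ell'})$, which overshoots $t$, while for $\ell'\notin\calK_k$ it contains no contribution at all from $(s_{k,\ell'},t]$, where the mild formula for $X^L$ still integrates. Your telescoping chain (true exponential $\to$ frozen integrand $\to$ discrete semigroup), carried out over $s\in[0,t]$, therefore leaves an unaccounted remainder on the last partial subinterval of each $\ell'$-grid. The paper repairs exactly this with two extra correction terms $U^{(4)}_{\ell,m}$ and $U^{(5)}_{\ell,m}$ (the missing increment $w_{\ell'm'}(t)-w_{\ell'm'}(s_{k,\ell'})$ for $\ell'\notin\calK_k$, and the overshoot $w_{\ell'm'}(\tau_k)-w_{\ell'm'}(t)$ for $\ell'\in\calK_k$), each again bounded by $\sum_{\ell'\le\Lambda}\sum_{|m'|\le\ell'}A_{\ell'}/n_{\ell'}$ via Lemmas \ref{lem:Ylpmplm id} and \ref{lem:bounded}. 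The gap is reparable with the same tools, but as written your identity is false and these boundary terms must be added.

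Two smaller remarks. First, you merge the paper's two interpolation steps $B(X^L(s))\to B(X^L(t_{j,\ell'}))\to B(\what{X}^L(t_{j,\ell'}))$ (its $U^{(1)}$ and $U^{(2)}$) into a single replacement and then split through $\what{X}^L(s)$ rather than through $X^L(t_{j,\ell'})$; this is equivalent, and routing the second piece through Lemma \ref{lem:sumXhatL} and the first into the Gronwall term is exactly what the paper does in substance. Second, for the semigroup-replacement piece (ii) the bound is not purely Lemma \ref{lem:semigroup}: one also needs the pointwise comparison of $\exp(-\mu_\ell(t-s))$ with $\exp(-\mu_\ell(t-t_{j,\ell'}))$ and a case analysis on whether $\mu_\ell/n_{\ell'}\le 1$, as in the paper's treatment of $U^{(3)}$; your sketch elides this but it is a routine, not a conceptual, omission.
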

\begin{proof}
%\footnote{(See M\"{u}ller-Gronbach--Ritter (2007), BIT, pp. 415--)}
In view of Theorem~\ref{thm:semidiscrete}, it suffices to show
that
\begin{equation}\label{equ:key}
\int_0^1 \bbE \| X^L(t) - \what{X}^L(t)\|^2 \mathrm{d}t \preceq 
\sum_{\ell'=0}^\Lambda
\sum_{|m'|\le \ell'}\frac{A_{\ell'}}{n_{\ell'}}
.
\end{equation}
For $\nu=1,2,3$ we define
\[
 U^{(\nu)}_{\ell,m}(t)
 = \sum_{\ell'=0}^{\Lambda} \sum_{|m'| \le \ell'}
	\sqrt{A_{\ell'}}
 \int_0^t \sum_{j=0}^{n_{\ell'}-1} 
 V^{(\nu)}_{{\ell',m',\ell,m},j}(s,t) 1_{(t_{j,\ell'}, t_{j+1,\ell'}]}(s) \mathrm{d} w_{\ell',m'}(s)
\]
with
\begin{align*}
V^{(1)}_{{\ell',m',\ell,m,j}}(s,t) 
&=\exp(-\mu_\ell(t-s))
\inprod{({B}(X^L(s)) - {B}(X^L(t_{j,\ell'}))) Y_{{\ell' m'}}} {Y_{{\ell m}}},\\
V^{(2)}_{{\ell',m',\ell,m,j}}(s,t) 
&=\exp(-\mu_\ell(t-s))
\inprod{({B}(X^L({t_{j,\ell'}})) - {B}(\what{X}^L(t_{j,\ell'}))) Y_{{\ell' m'}}} {Y_{{\ell m}}},\\
V^{(3)}_{{\ell',m',\ell,m,j}}(s,t) 
&= 
\left( \exp(-\mu_\ell(t-s)) - \frac{\Gamma_\ell(t)}{\Gamma_\ell(t_{j,\ell'})} \right)
\inprod{{B}(\what{X}^L(t_{j,\ell'})) Y_{{\ell' m'}}} {Y_{{\ell m}}}.
\end{align*}
For $t \in (\tau_{k-1},\tau_k]$. Let
\[
{U^{(4)}_{\ell,m}}(t) = 
\sum_{\ell'\in \{0,\dotsc,\Lambda\}\setminus\calK_k} \sum_{|m'| \le \ell'}
		{ \sqrt{A_{\ell'}} }
		\frac{\Gamma_\ell(t)}{\Gamma_{\ell}( {s_{k,\ell'}} )} 
		\inprod{{B}(\what{X}^L( {s_{k,\ell'}} )) Y_{{\ell' m'}}} {Y_{{\ell m}}}
		( { w_{\ell'm'}(t) - w_{\ell'm'}( s_{k,\ell'} )} ),
\]
and
\[
{U^{(5)}_{\ell,m}(t)} = 
\sum_{\ell'\in \calK_k} \sum_{|m'| \le \ell'}
	{ \sqrt{A_{\ell'}} }
	\frac{\Gamma_\ell(t)}{\Gamma_{\ell}( {s_{k,\ell'}} )} 
	\inprod{{B}(\what{X}^L( {s_{k,\ell'}} )) Y_{{\ell' m'}}} {Y_{{\ell m}}}
	( w_{\ell'm'}( {\tau_k} ) - w_{\ell'm'}(t)).
\]
Then, by definition
\begin{align*}
X^L_{\ell,m}(t) - \widehat{X}^L_{\ell,m}(t)
 &= (\exp(-\mu_\ell t)- \Gamma_\ell(t)) \cdot \inprod{\xi}{Y_{\ell m}} \\
 &\quad + U^{(1)}_{\ell,m}(t) + U^{(2)}_{\ell,m}(t) + U^{(3)}_{\ell,m}(t) + U^{(4)}_{\ell,m}(t)
 {- U^{(5)}_{\ell,m}(t)}.
\end{align*}
We will estimate each term separately using results in previously stated lemmas.

Using Lemma~\ref{lem:semigroup}, we have
\[
\sum_{\ell=0}^{L} \sum_{|m|\le \ell}
 \inprod{\xi}{Y_{\ell,m}}^2 \int_0^1 
 (\exp(-\mu_\ell t) - \Gamma_\ell (t))^2 \mathrm{d}t
 \preceq 1/n^* \preceq
 \sum_{\ell'\le \Lambda}
	 {\sum_{|m'|\le \ell'}}
		 A_{\ell'}/n_{\ell'}.
\]

%Using the {Lipschitz} condition and Lemma~\ref{lem:piecewise} we obtain
{Letting $f=T_g(X)-T_g(X^{L})$ in Lemma~\ref{lem:Ylpmplm id}, from }\eqref{equ:Tgu H Lip} {and $\sup_{\ell'm'}|\eta_{\ell'm'}|<\infty$, together with} 
Lemma~\ref{lem:piecewise} we obtain
\begin{align*}
\sum_{\ell=0}^{L} \sum_{|m| \le \ell} \bbE (U^{(1)}_{\ell,m}(t))^2
 &\preceq \sum_{\ell'=0}^{\Lambda} \sum_{|m'|\le \ell'}
 {A_{\ell'}}
  \sum_{j=0}^{{n_{\ell'}}-1} 
 \int_{ {t_{j,\ell'}} }^{ {t_{j+1,\ell'}} }  \bbE\| X^L(s) - X^L({t_{j,\ell'}}) \|^2 \mathrm{d}s \\
 &\preceq \sum_{\ell' \le \Lambda}
						 {\sum_{|m'|\le \ell'}}
								A_{\ell'}/n_{\ell'}.
\end{align*}

Put
\[
{h}(s) = \bbE \| X^L(s) - \widehat{X}^L(s)\|^2,
\]
which is finite because of Lemma~\ref{lem:bounded} and \eqref{XLbounded}.
By the linear growth condition, Lemma~\ref{lem:piecewise} and Lemma~\ref{lem:sumXhatL},
we have
\begin{align*}
\sum_{\ell \le L}\sum_{|m|\le \ell} 
\bbE(U^{(2)}_{\ell,m}(t))^2
&\preceq \sum_{\ell' \le \Lambda} \sum_{|m'|\le \ell'}
 { A_{\ell'} }
\sum_{j=0}^{ {n_{\ell'}-1} } 
\int_{t \wedge {t_{j,\ell'}}}^{ t \wedge {t_{j+1,\ell'}} }
(\bbE\| X^L(s) - X^L( {t_{j,\ell'}} )\|^2 + \\
&\qquad \qquad \qquad \qquad \bbE\|\widehat{X}^L(s)-\widehat{X}^L( {t_{j,\ell'}})\|^2 + {h}(s)) \mathrm{d}s \\
& \preceq \sum_{\ell' \le \Lambda} {\sum_{|m'|\le \ell'}} \frac{A_{\ell'}}{n_{\ell'}} + \int_0^t {h}(s) \mathrm{d}s.
\end{align*}
Next, we estimate $\int_0^1 \bbE(U_{\ell,m}^{(3)}(t))^2 \mathrm{d}t $. 
Suppose that {$s\in (t_{j,\ell'}, t_{j+1,\ell'}]$}. Then
\[
|\exp(-\mu_\ell(t-s)) - \exp(-\mu_\ell(t-{t_{j,\ell'}})) | \le  \exp(-\mu_\ell(t-s))\mu_\ell/n_{{\ell'}}.
\]
Therefore,
\begin{align}
\begin{split}
&\int_s^1
\left(
  \exp(-\mu_{\ell} (t-s))
   -\frac{ \Gamma_{\ell}(t) }{ \Gamma_{\ell}{ (t_{j,\ell'}) } }
\right)^2 \mathrm{d}t\\
&=
\int_s^1
\left(
  \exp(-\mu_{\ell} (t-s))
  -\exp(-\mu_{\ell} (t-t_{j,\ell'}))
  +\exp(-\mu_{\ell} (t-t_{j,\ell'}))
   -\frac{ \Gamma_{\ell}(t) }{ \Gamma_{\ell}{ (t_{j,\ell'}) } }
\right)^2 \mathrm{d}t \\
&\le
2
\int_s^1
|
  \exp(-\mu_{\ell} (t-s))
  -\exp(-\mu_{\ell} (t-t_{j,\ell'}))|^2
\mathrm{d}t
  +
2\int_s^1
\left(
\exp(-\mu_{\ell} (t-t_{j,\ell'}))
   -\frac{ \Gamma_{\ell}(t) }{ \Gamma_{\ell}{ (t_{j,\ell'}) } }
\right)^2 \mathrm{d}t.\\
&
\le 2\frac{ \mu_{\ell}^2 }{n_{\ell'}^2 }
\int_s^1
  \exp(-2\mu_{\ell} (t-s))
\mathrm{d}t
  +
2\int_{t_{j,\ell'}}^1
\left(
\exp(-\mu_{\ell} (t-t_{j,\ell'}))
   -\frac{ \Gamma_{\ell}(t) }{ \Gamma_{\ell}{ (t_{j,\ell'}) } }
\right)^2 \mathrm{d}t.
\label{equ:BIT p. 417}
\end{split}
\end{align}
Consider the case $\mu_{\ell}/n_{\ell'}\le 1$.  For the integral in the first term of \eqref{equ:BIT p. 417} we have
\begin{align}
\int_s^1
  \exp(-2\mu_{\ell} (t-s))
\mathrm{d}t
&=
\frac{1}{-2\mu_{\ell}}e^{-2\mu_{\ell}(1-s)} - \frac{1}{-2\mu_{\ell}}e^0=\frac1{2\mu_{\ell}}(1-e^{-2\mu_{\ell}(1-s)})\\
&\le
\min\{ \frac1{2\mu_{\ell}},\, \frac{2\mu_{\ell}}{2\mu_{\ell}}(1-s) \} \le \frac1{2\mu_{\ell}},
\end{align}
which gives us the estimate for the first term
$$
2\frac{ \mu_{\ell}^2 }{n_{\ell'}^2 }\frac1{2\mu_{\ell}}
=
\frac{ \mu_{\ell} }{n_{\ell'} }\frac{ 1 }{n_{\ell'} }
\le \frac{ 1 }{n_{\ell'} }.
$$
From Lemma \eqref{lem:semigroup}, the second term can be bounded by $\preceq { 1 }/{n_{\ell'} }$. 

If $\mu_{\ell}/n_{\ell'}\ge 1$, then we have $e^{-\mu_{\ell}x} \le e^{-n_{\ell'}x} $ ($x\ge 0$). Thus, \eqref{equ:BIT p. 417} can be bounded by
\begin{align}
&4
\int_s^1
|
  \exp(-\mu_{\ell} (t-s))|^2
  \mathrm{d}t + 4\int_{t_{j,\ell'}}^1\exp(-\mu_{\ell} (t-t_{j,\ell'}))|^2
\mathrm{d}t \\
&
4
\int_s^1
|
  \exp(-n_{\ell'} (t-s))|^2
  \mathrm{d}t + 4\int_{t_{j,\ell'}}^1|\exp(-n_{\ell'} (t-t_{j,\ell'}))|^2
\mathrm{d}t \\
&\le
4\frac1{2 n_{\ell'}} + 4\frac1{2 n_{\ell'}}.
\end{align}
Therefore,
\begin{align*}
&\int_s^1 \left( \exp(-\mu_\ell(t-s)) - \frac{\Gamma_\ell(t)}{\Gamma_\ell(t_{j,\ell'})} \right)^2 \mathrm{d}t 
%& \qquad \preceq \frac{1}{{n_{\ell'}}} + 
% \int_{t_{j,\ell'}}^1  
% \left( \exp(-\mu_\ell(t-t_{j,\ell'})) - \frac{\Gamma_\ell(t)}{\Gamma_\ell(t_{j,\ell'})} \right)^2 \mathrm{d}t \\
%& \qquad
\preceq
\frac{1}{n_{\ell'}}.
\end{align*}
%follows from Lemma~\ref{lem:semigroup}. 
Thus, with
$
  D_{\ell',m',\ell,m}(t) = 
\bbE( \langle {B}(\widehat{X}^L(t)) Y_{\ell',m'}, Y_{\ell,m}\rangle^2 ) 
$ 
we have
\begin{align*}
\int_0^1 \bbE(U_{\ell,m}^{(3)}(t))^2 \mathrm{d}t 
  &\preceq  \sum_{\ell'=0}^{\Lambda}
      \sum_{|m'| \le \ell'}
      { A_{\ell'}}
    \int_0^1\!\!\! \int_0^t \sum_{j=0}^{{n_{\ell'}-1}}
\left( \exp(-\mu_\ell(t-s)) -  \frac{\Gamma_\ell(t)}{\Gamma_\ell({t_{j,\ell'}})}\right)^2\\
 & \qquad \qquad \qquad \qquad \times  D_{\ell',m',\ell,m}({t_{j,\ell'}}) 1_{{ (t_{j,\ell'},t_{j+1,\ell'}] }} (s) \mathrm{d}s{\mathrm{d}t}  \\
&{=\sum_{\ell'=0}^{\Lambda}
       \sum_{|m'| \le \ell'}
        A_{\ell'}
     \int_0^1\!\!\! \int_s^1 \sum_{j=0}^{n_{\ell'}-1}
 \left( \exp(-\mu_\ell(t-s)) -  \frac{\Gamma_\ell(t)}{\Gamma_\ell({t_{j,\ell'}})}\right)^2}\\
  &\qquad \qquad \qquad \qquad \times  D_{\ell',m',\ell,m}({t_{j,\ell'}}) 1_{{ (t_{j,\ell'},t_{j+1,\ell'}] }} (s) \mathrm{d}t\,ds  \\
  &\preceq\sum_{\ell'=0}^{\Lambda}
         \sum_{|m'| \le \ell'}
          A_{\ell'}
       \int_0^1
        \sum_{j=0}^{n_{\ell'}-1}
			\frac1{n_{\ell'}} D_{\ell',m',\ell,m}({t_{j,\ell'}}) 1_{{ (t_{j,\ell'},t_{j+1,\ell'}] }} (s) \mathrm{d}s \\
&
=\sum_{\ell'=0}^{\Lambda}
       \frac{ A_{\ell'}}{n_{\ell'}^2}
        \sum_{j=0}^{n_{\ell'}-1}
        \sum_{|m'| \le \ell'}
		 D_{\ell',m',\ell,m}({t_{j,\ell'}}).
\end{align*}
From the Lemma \ref{lem:Ylpmplm id}, for any $\ell'\in\{1,\dots,\Lambda\}$ we have
\begin{align}
\sumlm
\sum_{|m'| \le \ell'}D_{\ell',m',\ell,m}(t)\le
{\big(\sup_{\lambda,\nu}|\eta_{\lambda,\nu}|\big)^2}
\frac{2\ell'+1}{4\pi}
{\|T_g(\widehat{X}^L(t)) \|^2}
,
\end{align}
and thus Lemma~\ref{lem:bounded} implies
\begin{equation}
\sum_{\ell=0}^L \sum_{|m|\le \ell} 
\int_0^1 \bbE(U_{\ell,m}^{(3)} (t))^2 \mathrm{d}t 
\preceq
	\sum_{\ell'\le \Lambda}
		\frac{2\ell'+1}{4\pi}
		\frac{A_{\ell'}}{n_{\ell'}}
\asymp
	\sum_{\ell'\le \Lambda}
		\sum_{|m'|\le \ell'}
		\frac{A_{\ell'}}{n_{\ell'}}.
\end{equation}
The same facts {yield}
\begin{equation}
\sum_{\ell \le L} \sum_{|m|\le \ell}
\bbE (U^{(4)}_{\ell,m}(t))^2
\le 
	\sum_{\ell'\le \Lambda} 
		\sum_{|m'|\le\ell'}
		\frac{A_{\ell'}}{n_{\ell'}},
\end{equation}
and
\begin{equation}
\sum_{\ell \le L} \sum_{|m|\le \ell}
\bbE (U^{(5)}_{\ell,m}(t))^2
\le 
	\sum_{\ell'\le \Lambda} 
		\sum_{|m'|\le\ell'}
		\frac{A_{\ell'}}{n_{\ell'}}.
\end{equation}
Combining above estimates, we obtain
\[
\int_0^r {h}(t) \mathrm{d}t \preceq \sum_{\ell' \le \Lambda}
	\sum_{|m'|\le\ell'} \frac{A_{\ell'}}{n_{\ell'}}
 + \int_0^r \int_0^t {h}(s) \mathrm{d}s \mathrm{d}t.
\]
Finally, we apply Gronwall's lemma to derive
$
 \int_0^1 {h}(t) \mathrm{d}t \preceq \sum_{\ell'=0}^{\Lambda} \sum_{|m'|\le\ell'} \frac{A_\ell'}{n_{\ell'}},
$ 
as claimed in \eqref{equ:key}.
\end{proof}	
%%%%%%%%%%%%%%%%%%%%%%%%%%%%%%%%%%%%%%%%%%%%%%%%%%%%%%%%%%%%%%%
\section{Numerical experiments}\label{sec:num}
In this section, we consider the following equation
\begin{align*}
\begin{aligned}
\mathrm{d}X(t) &= \Delta^{\ast} X(t) \mathrm{d}t + X(t) \mathrm{d}W(t) \\
X(0)  &= \xi, \quad t\in [0,1]
\end{aligned}
\end{align*}
where $\xi$ {is} defined by
\[
   \xi = \sum_{\ell=0}^{100} \sum_{|m|\le \ell} \xi_{\ell,m} Y_{\ell,m},
\]
%{The original Figures are commented out for now since I do not have jpg data. }
\begin{comment}
% commented out since I don't have jpg fiels
\begin{center}
\begin{figure}[ht]
\includegraphics[scale=0.40]{N100_T0.jpg}
%\caption{Solution at $t=0$}
\includegraphics[scale=0.40]{N100_T500.jpg}
%\caption{Solution at $t=0.5$}
\includegraphics[scale=0.40]{N100_T1000.jpg}
\caption{Solution at $t=0; 0.5; 1.0$}
\end{figure}
\end{center}

\begin{center}
\includegraphics[scale=0.40]{AvgE2_N40.jpg}
\end{center}
\end{comment}
%
\begin{figure}[htbp]%[H]
  \centering
  \includegraphics[width=0.6\textwidth]{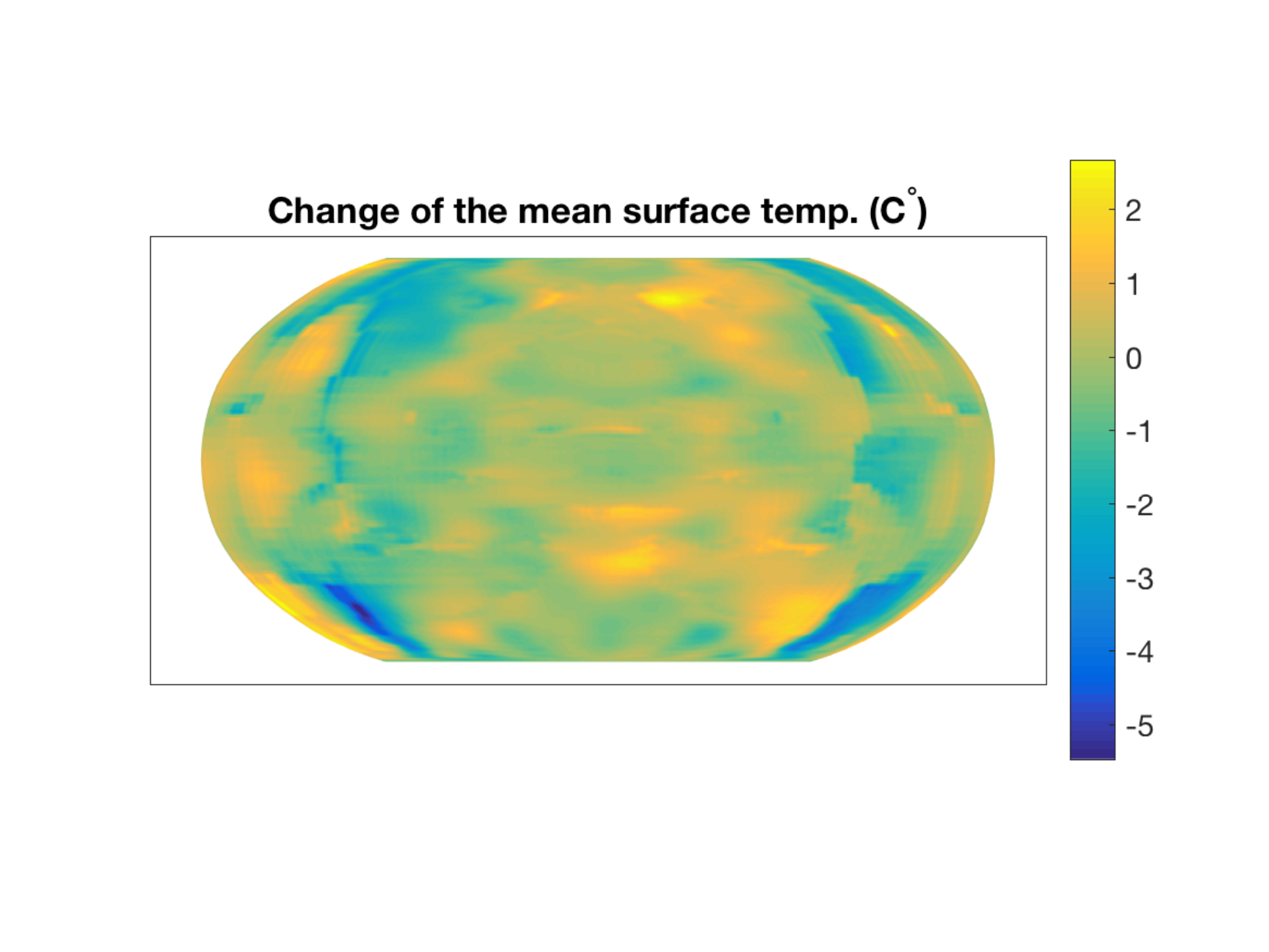}
  \caption{{Plot of the change of the mean surface temperature in June, from 2006 to 2016, approximated by the spherical harmonics up to degree $120$.}}
\label{fig:init plot}
\end{figure}
%\begin{comment}
\begin{figure}[htbp]%[H]
  \centering
  \includegraphics[width=0.8\textwidth]{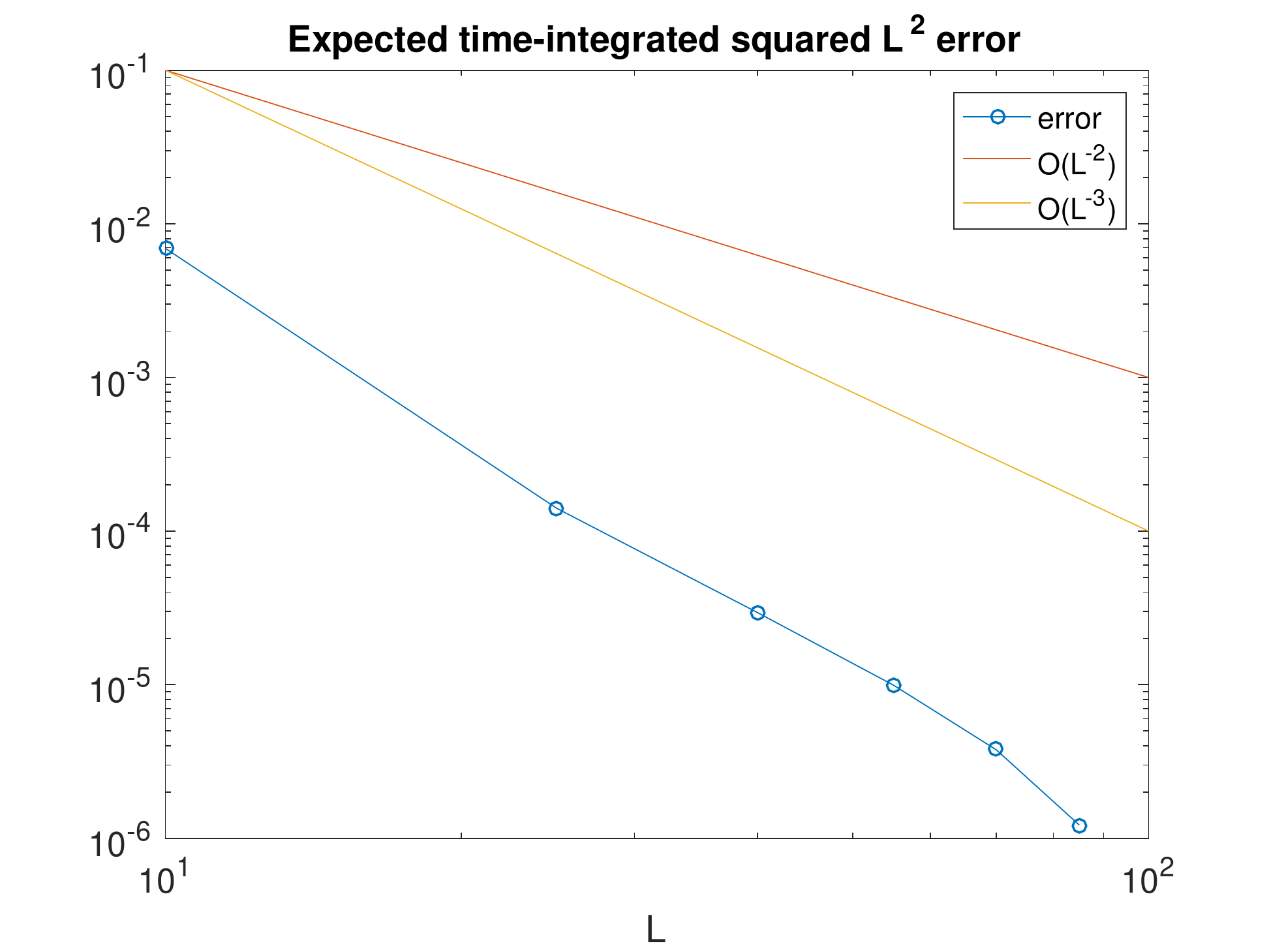}
  \caption{Plot of $\bbE\big(  \int_0^1 \|X(t) - \what{X}^L(t) \|^2 \mathrm{d}t \big)$. }
\label{fig:spatial trc}
\end{figure}
%\end{comment}
where $\xi_{\ell,m}$ are coefficients taken from 
GISTEMP Surface Temperature Analysis by NASA Goddard Institute for Space Sciences
(\url{http://data.giss.nasa.gov/gistemp/maps/}). 
The data describes the change of the mean surface temperature in June, from 2006 to 2016. See Figure \ref{fig:init plot} for a plot of the data, approximated by the spherical harmonics up to degree $120$.

\YK{Note that the mapping $B$ defined by the Nemytskii operator with a linear function together with point wise multiplication as in $B(x)f:=x\cdot f$ for $x\in H$, $f\in H_0$ satisfies the  condition \eqref{equ:Lip} and \eqref{equ:lin growth}.} 
We assume the $Q$-Wiener process $W$ on $H$ is defined by the covariance operator $Q$ such that 
$$
Q Y_{0} = 100,\quad\text{and} \quad
Q Y_{\ell' m'} = \frac{100}{{\ell'}^2}1_{\{\ell'\le10\}}(\ell')\quad \text{ for }\ell'\in\mathbb{N}.
$$
First, we consider the spatial truncation error. 
We choose $\frac1{n_{\ell'}}=\frac1n=0.004$ for $\ell'=1,\dotsc,10$, and consider 
$L=10,25,40,55,\dotsc,100$, where the case $L=100$ we see as a reference solution $X$. 
For each sample, $\int_0^1 \|X(t) - \what{X}^L(t) \|^2 \mathrm{d}t$ is approximated by 
$
\sum_{j=1}^N\|X(j/n) - \what{X}^L(j/n) \|^2\frac1n
$, and the expected value $\bbE\sum_{j=1}^N\|X(j/n) - \what{X}^L(j/n) \|^2\frac1n$ is approximated by Monte Carlo method with $100$ samples. 
Figure \ref{fig:spatial trc} shows the error decay of the second to third order, which is consistent with Theorem \ref{thm:main result}.
%
%
\begin{comment}
\begin{figure}[htbp]%[H]
  \centering
  \includegraphics[width=0.8\textwidth]{truncdt_Lmax100_Lambda10}
  \caption{Plot of $\bbE\big(  \int_0^1 \|X(t) - \what{X}^L(t) \|^2 \mathrm{d}t \big)$. }
\label{fig:temporal trc}
\end{figure}
Next, we consider the time discretisation. We choose $L=100$, and consider the time step $0.04,0.036,0.032,\dots,0.0120$. The computation of the error $\bbE(\int_0^1 \|X(t) - \what{X}^L(t) \|^2 \mathrm{d}t)$ is the same as above. 
\end{comment}

\section{Acknowledgement}
The authors are grateful to Ian H. Sloan, Klaus Ritter, 
	Thomas M\"{u}ller-Gronbach and Christoph Schwab for helpful conversations on the mathematical content of the paper. This work was undertaken with the assistance of computational resources from the UNSW HPC Service leveraging the National Computational Infrastructure (NCI), which is supported by the Australian Government, and also with the support from Australian Research Council's Discovery Project DP150101770.
%% If you have bibdatabase file and want bibtex to generate the
%% bibitems, please use
%%
\bibliographystyle{elsarticle-num} 
%%  \bibliography{<your bibdatabase>}

%% else use the following coding to input the bibitems directly in the
%% TeX file.

%\bibliographystyle{plain}
%\bibliography{mybib}
\bibliography{KL_heat_els.bib}

\end{document}